\newcommand{\eij}{e_{ij}}
\newcommand{\catc}{\mathcal{C}}
\newcommand{\one}{\mathbf{1}}
\newcommand{\lex}[1]{{ \,\underset{\mathrm{lex}}{#1}\, }}
\newcommand{\rlex}[1]{{ \,\underset{\mathrm{rlex}}{#1}\, }}
\newcommand{\const}{(\text{const.})}
\newcommand{\surj}{\twoheadrightarrow}
\newcommand{\inj}{\hookrightarrow}
\newcommand{\mor}{\rightarrow}
\newcommand{\schub}{\mathfrak{S}}
\newcommand{\smod}{\mathcal{S}}
\newcommand{\der}{\partial}
\newcommand{\borel}{\mathfrak{b}}
\newcommand{\nplus}{\mathfrak{n}^+}
\newcommand{\csa}{\mathfrak{h}}
\newcommand{\ualg}{\mathcal{U}}
\newcommand{\inv}{\mathrm{code}}
\newcommand{\ch}{\mathrm{ch}}
\newcommand{\Ext}{\mathrm{Ext}}
\newcommand{\Hom}{\mathrm{Hom}}
\newcommand{\End}{\mathrm{End}}
\newcommand{\nonneg}{\ZZ_{\geq 0}}
\newcommand{\ZZ}{\mathbb{Z}}
\newcommand{\lmb}{\lambda}
\newcommand{\Lmb}{\Lambda}
\newcommand{\Ker}{\mathrm{Ker}}
\newcommand{\std}{\Delta}
\newcommand{\costd}{\nabla}
\renewcommand{\phi}{\varphi}
\DeclareMathOperator{\supp}{supp}
\DeclareMathOperator{\defect}{def}
\DeclareMathOperator{\hd}{hd}
\DeclareMathOperator{\soc}{soc}
\newtheorem{lem}{Lemma}[section]
\newtheorem{prop}[lem]{Proposition}
\newtheorem{thm}[lem]{Theorem}
\newtheorem{cor}[lem]{Corollary}
\theoremstyle{definition}
\newtheorem{defn}[lem]{Definition}
\newtheorem{rmk}[lem]{Remark}
\newtheorem{eg}[lem]{Example}
\title{Kra\'skiewicz-Pragacz modules and Ringel duality}
\author{
Masaki Watanabe \\ 
Graduate School of Mathematical Sciences, The University of Tokyo, \\
3-8-1 Komaba Meguro-ku Tokyo 153-8914, Japan \\ \texttt{mwata@ms.u-tokyo.ac.jp}}
\date{Last revised \today}
\begin{document}
\maketitle

\noindent\textbf{Abstract.}
In \cite{KP0, KP} Kra\'skiewicz and Pragacz introduced representations of the upper-triangular Lie algebra $\borel$ whose characters are Schubert polynomials. 
In \cite{W} the author studied the properties of Kra\'skiewicz-Pragacz modules using the theory of highest weight categories. 
From the results there, in particular we obtain a certain highest weight category whose standard modules are KP modules. 
In this paper we show that this highest weight category is self Ringel-dual. This leads to an interesting symmetry relation on Ext groups between KP modules. We also show that the tensor product operation on $\borel$-modules is compatible with Ringel duality functor. 

\section{Introduction}
The study of Schubert polynomials is an important and interesting subject in algebraic combinatorics. 
One of the possible methods for studying Schubert polynomials is via certain representations, introduced by Kra\'skiewicz and Pragacz (\cite{KP0, KP}), of the Lie algebra $\borel$ of all upper-triangular matrices. These representations, which we call \textit{KP modules} in this paper, has the property that their character with respect to the subalgebra of diagonal matrices is equal to Schubert polynomials: just like Schur polynomials appear as the characters of irreducible representations of $\mathfrak{gl_n}$. 

In \cite{W} and \cite{W2}, the author investigated the properties of KP modules. The main motivation there was the investigation of Schubert positivity: i.e. the positivity of coefficients in the expansion of a polynomial into a linear combination of Schubert polynomials. 
Because of the property of KP modules described above, if a polynomial $f$ is the character of some $\borel$-module having a filtration by KP modules then $f$ is Schubert-positive. In \cite{W} we gave a characterization of $\borel$-modules having such filtrations, and we used the result in \cite{W2} to prove some positivity results on Schubert polynomials. For details on these results see these two papers. 

One of the main tools used in \cite{W} is the theory of \textit{highest weight categories}. 
A highest weight category is an abelian category with a specified family of objects called \textit{standard objects}, together with some axioms. 
For a highest weight category the notion of \textit{costandard objects} are naturally defined, and it is then shown that an object has a filtration by standard objects (a \textit{standard filtration}) if and only if its extensions with any costandard objects vanish. 
In \cite{W}, with methods from highest weight categories, we obtained a characterization of modules having a filtration by KP modules (the work was strongly inspired by some works by Polo, van der Kallen, Joseph, etc. on Demazure modules: see the bibliography in \cite{W}). 

For a highest weight category $\catc$, there is a notion called \textit{Ringel dual} of $\catc$, denoted $\catc^\vee$. 
It is another highest weight category, with a contravariant equivalence between the subcategories $\catc^\std$ and $(\catc^\vee)^\std$ of objects having standard filtrations: i.e. $\catc^\vee$ is obtained by ``dualizing'' the structure of the standardly-filtered part of $\catc$. 
The main purpose of this paper is to exhibit a highest weight category $\catc_n$ whose standard objects are KP modules (Theorem \ref{kp-hwc}), and show that this highest weight category is self Ringel-dual (Theorem \ref{thm:dual-cn}). 
This provides a contravariant equivalence $\catc_n^\std \mor \catc_n^\std$ on the full subcategory of modules having KP filtrations, and gives an interesting symmetry relation (Corollary \ref{cor:ext}) on the extension groups between KP modules. 
Note that the ordering on the set of weights for our highest weight category
is not the usual root order on the weight lattice (see Section \ref{sec:kp-hw}). 
We also investigate the relation between Ringel duality on $\catc_n$ and the tensor product operation on modules (Theorem \ref{thm:tensor-dual}): we show that the tensor product operation on the $\borel$-modules, slightly modified to give an operation on $\catc_n$ (which is not closed under the usual tensor product), commutes with the Ringel duality functor. 

The paper is organized as follows. In Section \ref{prelimi} we prepare some definitions and results on Schubert polynomials, KP modules, and highest weight categories. 
In Section \ref{sec:kp-hw} we explicitly relate KP modules and highest weight categories: we give a certain highest weight category $\catc_n$ with KP modules $\smod_w$ ($w \in S_n$) being standard modules (it is in fact an immediate consequence of the results in \cite{W}). 
In Section \ref{sec:dual-cn} we show that the Ringel dual of $\catc_n$ is equivalent to itself, and show that under the equivalence $\catc^\std \mor (\catc^\vee)^\std$ the standard modules correspond as $\smod_w \mapsto \smod_{w_0ww_0}$ ($w \in S_n$). 
In Section \ref{sec:tensor-dual} we show the compatibility between tensor product operation and the Ringel duality functor on $\catc_n$. 

\section{Preliminaries}
\label{prelimi}
\subsection{Schubert polynomials}
A \textit{permutation} is a bijection from the set of all positive integers to itself which fixes all but finitely many points. 
Throughout this paper let us fix a positive integer $n$. 
Let $S_n =\{w: \text{permutation, $w(i)=i$ ($i > n$)} \}$
and $S_\infty^{(n)} = \{w : \text{permutation, $w(n+1)<w(n+2)<\cdots$}\}$. 
For $i < j$, let $t_{ij}$ denote the permutation which exchanges $i$ and $j$ and fixes all other points. 
Let $s_i=t_{i,i+1}$. 
For a permutation $w$, let $\ell(w)=\#\{i<j : w(i)>w(j)\}$. 
Let $w_0 \in S_n$ be the longest element of $S_n$, i.e. $w_0(i)=n+1-i$ ($1 \leq i \leq n$). 
For $w \in S_\infty^{(n)}$ we define $\inv(w)=(\inv(w)_1, \ldots, \inv(w)_n) \in \nonneg^n$ by $\inv(w)_i=\#\{j : i<j, w(i)>w(j)\}$: this is usually called the \textit{Lehmer code} and it uniquely determines $w$. 
 Note that if $w \in S_n$ we have $\inv(w) \in \Lmb_n := 
\{(a_1, \ldots, a_n) \in \ZZ^n : 0 \leq a_i \leq n-i \}$. 

For a polynomial $f=f(x_1, \ldots, x_n)$ and $1 \leq i \leq n-1$, we define $\der_if=\frac{f-s_if}{x_i-x_{i+1}}$. 
For each $w \in S_\infty^{(n)}$ we can assign its \textit{Schubert polynomial} $\schub_w \in \ZZ[x_1, \ldots, x_n]$, which is recursively defined by 
\begin{itemize}
\item $\schub_{w}=x_1^{w(1)-1}x_2^{w(2)-1} \cdots x_n^{w(n)-1}$ if $w(1)>\cdots>w(n)$, and
\item $\schub_{ws_i}=\der_i\schub_w$ if $\ell(ws_i)<\ell(w)$. 
\end{itemize}
It is known that the set $\{\schub_w : w \in S_n\}$
constitutes a $\ZZ$-linear basis of the ring $H_n=\ZZ[x_1, \ldots, x_n]/I$ where $I$ is the ideal generated by the homogeneous symmetric polynomials in $x_1, \ldots, x_n$ of positive degrees. 
Note also that the natural map $\ZZ[x_1, \ldots, x_n] \surj H_n$ restricts to an isomorphism of $\ZZ$-modules $\bigoplus_{\lmb \in \Lmb_n} \ZZ x^\lmb \overset{\sim}\mor H_n$, where $x^\lmb=x_1^{\lmb_1} \cdots x_n^{\lmb_n}$ (\cite[Proposition 2.5.3, Corollary 2.5.6]{Man}). 
We also need the following basic facts:
\begin{prop}
\label{prop:hn-invol}
Let $\iota: H_n \mor H_n$ be the ring automorphism given by $\overline{x_i} \mapsto -\overline{x_{n+1-i}}$ where $\overline{x_i} = x_i \bmod I$. Then for $w \in S_n$, $\iota(\schub_w) = \schub_{w_0ww_0}$. 
\end{prop}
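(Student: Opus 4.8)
The plan is to reduce the identity to the divided-difference recursion defining $\schub_w$, exploiting the fact that $\iota$ interacts predictably with the operators $\der_i$. First I would record that $\iota$ is genuinely well defined: the substitution $x_i \mapsto -x_{n+1-i}$ on $\ZZ[x_1,\dots,x_n]$ sends a homogeneous symmetric polynomial of degree $d$ to $(-1)^d$ times itself, hence preserves the ideal $I$ and descends to $H_n$, and its square is the identity. Next, working first in $\ZZ[x_1,\dots,x_n]$ and then passing to $H_n$, I would establish the intertwining relation
\[
\iota \circ \der_i = \der_{n-i} \circ \iota \qquad (1 \le i \le n-1).
\]
This is a short computation: conjugating the transposition $s_i$ (acting on polynomials) by $\iota$ yields $s_{n-i}$, and $\iota(x_i - x_{i+1}) = -(x_{n+1-i} - x_{n-i})$, so applying $\iota$ to $\der_i f = (f - s_i f)/(x_i - x_{i+1})$ produces exactly $\der_{n-i}(\iota f)$.

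The base case is to show $\iota(\schub_{w_0}) = \schub_{w_0}$ in $H_n$, which matches $\schub_{w_0 w_0 w_0}$. Since $\schub_{w_0} = x_1^{n-1}x_2^{n-2}\cdots x_n^0 = x^\delta$ with $\delta = (n-1, n-2, \dots, 0)$, we get $\iota(\schub_{w_0}) = (-1)^{|\delta|}\, x^{w_0\delta}$ where $x^{w_0\delta} = x_1^0 x_2^1 \cdots x_n^{n-1}$. The degree-$\binom{n}{2}$ homogeneous component of $H_n$ is one-dimensional and spanned by $\schub_{w_0}$; moreover, applying the longest divided difference $\der_{w_0}$ (which antisymmetrizes and sends $\schub_{w_0}$ to $1$) shows $x^{\sigma\delta} = \sgn(\sigma)\,\schub_{w_0}$ in $H_n$ for every $\sigma \in S_n$. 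Taking $\sigma = w_0$ and noting $|\delta| = \binom{n}{2} = \ell(w_0)$, the sign $(-1)^{|\delta|}$ and the sign $\sgn(w_0)$ cancel, giving $\iota(\schub_{w_0}) = \schub_{w_0}$.

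Finally I would run the induction downward from $w_0$. For $w \in S_n$, choose a reduced word $w_0 w = s_{j_1} s_{j_2} \cdots s_{j_k}$ (so $k = \ell(w_0) - \ell(w)$); then $w = w_0 s_{j_1} \cdots s_{j_k}$ with the length dropping by one at each partial product, so iterating $\schub_{v s_i} = \der_i \schub_v$ gives $\schub_w = \der_{j_k} \cdots \der_{j_1} \schub_{w_0}$ in $H_n$. Since $v \mapsto w_0 v w_0$ is a length-preserving automorphism of $S_n$ carrying $s_i$ to $s_{n-i}$, the word $w_0 w w_0 = w_0 s_{n-j_1} \cdots s_{n-j_k}$ is an analogous reduced chain, whence $\schub_{w_0 w w_0} = \der_{n-j_k} \cdots \der_{n-j_1} \schub_{w_0}$. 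Applying $\iota$ to the expression for $\schub_w$, commuting it past each $\der_{j_m}$ via the intertwining relation, and then invoking the base case yields
\[
\iota(\schub_w) = \der_{n-j_k} \cdots \der_{n-j_1} \iota(\schub_{w_0}) = \der_{n-j_k} \cdots \der_{n-j_1}\schub_{w_0} = \schub_{w_0 w w_0}.
\]

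The main obstacle is the base case: one must correctly identify the class of the monomial $x^{w_0\delta}$ in the top degree of $H_n$ and verify that the sign $(-1)^{\ell(w_0)}$ produced by $\iota$ is precisely cancelled by $\sgn(w_0)$. Once the intertwining relation $\iota \der_i = \der_{n-i}\iota$ and the base case are secured, the inductive step is purely formal; in particular it does not matter which reduced word is chosen, since the left-hand side of the asserted identity does not depend on it.
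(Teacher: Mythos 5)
Your proof is correct and follows the same overall strategy as the paper: establish the intertwining relation $\iota \circ \der_i = \der_{n-i} \circ \iota$, reduce to the base case $w=w_0$ by the divided-difference recursion, and verify the base case. The one place you diverge is in the base case itself. The paper also first observes that $\iota(\schub_{w_0})$ is a scalar multiple of $\schub_{w_0}$ (since the top degree of $H_n$ is one-dimensional), but then pins down the scalar by applying $\der_{i_1}\cdots\der_{i_l}$ for a longest word and using the already-proved intertwining relation together with the fact that $(n-i_1,\dots,n-i_l)$ is also a longest word: both sides are sent to $1$, so the scalar is $1$. You instead compute $\iota(\schub_{w_0}) = (-1)^{\binom{n}{2}}x^{w_0\rho}$ directly and identify $x^{w_0\rho}$ as $\sgn(w_0)\schub_{w_0}$ in $H_n$ via the antisymmetrizing property of $\der_{w_0}$. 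Both work; the paper's version is marginally slicker since it recycles the intertwining relation rather than introducing the Vandermonde/antisymmetrization identity, but your version is also perfectly sound, and you additionally record the (worthwhile, if routine) check that $\iota$ preserves the ideal $I$.
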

\begin{rmk}
The automorphism $\iota$ corresponds to the map between flag varieties which takes a flag to its dual flag: see eg. \cite[\S 10.6, Exercise 13]{Ful}
\end{rmk}
\begin{proof}
First note that $\iota \circ \der_i \circ \iota = \der_{n-i}$. Thus we only have to check the proposition for $w=w_0$. 

Since the only elements in $H_n = \bigoplus_{w \in S_n} \ZZ \schub_w$ with degree $\binom{n}{2}$ are the constant multiples of $\schub_{w_0}$, we see that $\iota(\schub_{w_0})$ is a constant multiple of $\schub_{w_0}$. 
Let $(i_1, \ldots, i_l)$ be a longest word, i.e. $l=\ell(w_0)$ and $w=s_{i_1} \cdots s_{i_l}$. Note that $(n-i_1, \ldots, n-i_l)$ is also a longest word. 
We have $\der_{i_1}\cdots \der_{i_l} \schub_{w_0} = \schub_{\mathrm{id}} = 1$ and $\der_{i_1} \cdots \der_{i_l} \iota(\schub_{w_0}) = (\iota\der_{n-i_1}\iota) \cdots (\iota\der_{n-i_l}\iota) \iota(\schub_{w_0}) = \iota(\der_{n-i_1}\cdots \der_{n-i_l} \schub_{w_0}) = 1$. Thus $\iota(\schub_{w_0})=\schub_{w_0}$. 
\end{proof}

\begin{prop}
\label{prop:schub-vanish}
For $w \in S_\infty^{(n)} \smallsetminus S_n$ we have $\schub_w \in I$.
\end{prop}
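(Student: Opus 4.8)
The plan is to show that the Schubert polynomial $\schub_w$ for $w \in S_\infty^{(n)} \smallsetminus S_n$ maps to zero under $\ZZ[x_1, \ldots, x_n] \surj H_n$ by exploiting the structure of the basis $\{\schub_v : v \in S_n\}$ of $H_n$ together with degree and stability considerations. The key observation is that since $w \notin S_n$ but $w \in S_\infty^{(n)}$, we have $w(i) > n$ for some $i \leq n$, and consequently $\ell(w) > \binom{n}{2} = \ell(w_0)$; indeed $\inv(w)_i = \#\{j > i : w(i) > w(j)\}$ can exceed $n - i$ precisely when $w(i) > n$. Since $H_n = \bigoplus_{v \in S_n} \ZZ\schub_v$ is concentrated in degrees $\leq \binom{n}{2}$, any homogeneous polynomial of degree $> \binom{n}{2}$ automatically lies in $I$. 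So it suffices to handle the case $\ell(w) \leq \binom{n}{2}$, i.e. to rule out that a ``short'' permutation $w$ with $w(i) > n$ for some $i \leq n$ can have $\schub_w \notin I$.

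First I would reduce to the divided-difference recursion. Recall $\schub_w$ is defined by the two bullet points, and that for $w \in S_\infty^{(n)}$ the dominant case $w(1) > \cdots > w(n)$ forces $w(i) \geq n$ for small $i$ when $w \notin S_n$; in the dominant case $\schub_w = x_1^{w(1)-1} \cdots x_n^{w(n)-1}$, and since some exponent $w(i) - 1 \geq n$ exceeds $n - i$, this monomial lies outside $\bigoplus_{\lmb \in \Lmb_n} \ZZ x^\lmb$. One then needs that such monomials lie in $I$: this follows from the isomorphism $\bigoplus_{\lmb \in \Lmb_n} \ZZ x^\lmb \overset{\sim}{\mor} H_n$ quoted above, since a monomial $x^\mu$ with $\mu \notin \Lmb_n$ reduces in $H_n$ to a $\ZZ$-combination of $x^\lmb$ with $\lmb \in \Lmb_n$ of the \emph{same} degree — but actually the cleanest statement is that the straightening of $x^\mu$ modulo $I$ involves only $x^\lmb$ with $|\lmb| = |\mu|$, and when $|\mu| > \binom{n}{2}$ there are no such $\lmb$, giving $x^\mu \in I$; and when $\mu \notin \Lmb_n$ but $|\mu| \leq \binom{n}{2}$, one argues by an explicit straightening using the symmetric-function relations. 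For the general (non-dominant) $w$, I would induct on $\ell(w_0) - \ell(w)$ downward, or equivalently build $w$ up from its dominant representative: if $\schub_w \in I$ and $\ell(ws_i) > \ell(w)$, write $\schub_{ws_i}$ in terms of $\schub_w$ via the recursion run backwards. However this direction of the recursion is awkward, so instead I would argue: pick $v \in S_n$ arbitrary and compute the pairing/coefficient of $\schub_v$ in $\schub_w \bmod I$.

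A cleaner route, which I would actually adopt: use the automorphism $\iota$ of Proposition \ref{prop:hn-invol} and the fact that $\iota \circ \der_i \circ \iota = \der_{n-i}$ together with a direct argument. Alternatively — and this is probably the simplest — invoke the known \emph{stability} property of Schubert polynomials: $\schub_w$ for $w \in S_\infty^{(n)}$ is the image of the ``infinite'' Schubert polynomial, and $w \notin S_n$ exactly means $w$ is not $\Lmb_n$-dominated, so $\schub_w$ expanded in monomials has all exponent vectors $\mu$ with $\mu \not\leq (n-1, n-2, \ldots, 1, 0)$ componentwise in the sense that $\mu \notin \Lmb_n$... this requires care. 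The hard part will be the monomial-support claim: showing that every monomial $x^\mu$ appearing in $\schub_w$ for $w \in S_\infty^{(n)} \smallsetminus S_n$ has $\mu \notin \Lmb_n$, and then that $x^\mu \notin \Lmb_n$-span implies $x^\mu \in I$. The latter implication is the content of the cited $\bigoplus_{\lmb \in \Lmb_n} \ZZ x^\lmb \overset{\sim}{\mor} H_n$ isomorphism once one knows straightening preserves total degree: a monomial $x^\mu$ with $\mu_1 \geq n$ (say) can be rewritten using $e_n(x_1,\ldots,x_n) \in I$ is not quite it — rather one uses that $h_k(x_1,\ldots,x_n) \equiv 0$ in $H_n$ for $k \geq 1$ is false; the correct relations are $e_1, \ldots, e_n \equiv 0$, and repeated substitution $x_i^n = -(x_i^{n-1} e_1 - x_i^{n-2} e_2 + \cdots)$ lets one lower any exponent $\geq n$, strictly decreasing the dominance defect while preserving degree, so every $x^\mu$ with $\mu \notin \Lmb_n$ reduces to a combination of $x^\lmb$, $\lmb \in \Lmb_n$, $|\lmb| = |\mu|$; for $w \notin S_n$ one has $|\mu| = \ell(w)$ and either $\ell(w) > \binom{n}{2}$ (done immediately) or the support argument forces each $x^\mu$ to have some $\mu_i > n - i$, hence after straightening everything cancels — this cancellation is what I would verify by a dimension/grading count, namely $\dim_\ZZ (H_n)_{\ell(w)} = \#\{v \in S_n : \ell(v) = \ell(w)\}$ and matching it against the known Schubert-polynomial expansion in $H_n$, which expresses $\schub_w \bmod I$ as a nonneg-integer combination of $\schub_v$ with $\ell(v) = \ell(w)$, $v \in S_n$, with the combination being \emph{empty} precisely when $w \notin S_n$ by the monomial-support disjointness. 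I expect the monomial-support disjointness to be the only genuine obstacle; everything else is bookkeeping with the divided-difference recursion and the quoted structural facts about $H_n$.
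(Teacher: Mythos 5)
Your opening claim that $w(i)>n$ for some $i\le n$ forces $\ell(w)>\binom{n}{2}$ is false: for $n=2$ take $w=(1,3,2,4,5,\ldots)\in S_\infty^{(2)}\smallsetminus S_2$; then $\ell(w)=1=\binom{2}{2}$. Having a single entry $\inv(w)_i>n-i$ does not push the sum $\ell(w)=\sum_i\inv(w)_i$ above $\binom{n}{2}$, since the other entries may be far below their caps. You do later retreat from this, so it is not fatal, but it is certainly not the key observation.

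The more serious issue is that you never close the argument. Your skeleton---reduce to the dominant case via divided differences, then handle the dominant monomial---is exactly the paper's route, but you lose it in execution. For the reduction, one needs only that $\der_i I\subset I$ (immediate since the $e_k$ are $s_i$-invariant) and that every $w\in S_\infty^{(n)}\smallsetminus S_n$ is obtained from a dominant $w'\in S_\infty^{(n)}\smallsetminus S_n$ by applying divided differences: sorting the first $n$ values into decreasing order does not change the set $\{w(1),\ldots,w(n)\}$ nor the tail, so the dominant representative is still outside $S_n$. You instead try to ``run the recursion backwards'' from a shorter $w$ up to $ws_i$, notice this is awkward, and abandon the step, which was the easy half. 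For the dominant case, the genuine gap is that you have both ingredients but combine them wrongly. Since $w'(1)$ is the largest of the window values and one of them exceeds $n$, we get $w'(1)-1\ge n$, so $x_1^n$ divides the monomial $\schub_{w'}=x_1^{w'(1)-1}\cdots x_n^{w'(n)-1}$. You correctly note that $x_i^n\in I$ (from $x_i^n-e_1x_i^{n-1}+\cdots+(-1)^ne_n=0$, or equivalently the generating-function identity the paper uses), but then you use this only to ``straighten'' the monomial into the $\Lmb_n$-box, which proves nothing: a monomial $x^\mu$ with $\mu\notin\Lmb_n$ need not lie in $I$---e.g.\ $x_2^2\equiv -x_1^2-x_1x_2\ne 0$ in $H_3$. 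The correct, one-line use is divisibility: $x_1^n\in I$ and $x_1^n\mid\schub_{w'}$, hence $\schub_{w'}\in I$. The dimension-count and cancellation heuristics you sketch at the end are unnecessary and, as stated, do not constitute a proof.
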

\begin{proof}
Since $\der_i I \subset I$ for any $1 \leq i \leq n-1$, it suffices to show that the proposition holds in the case $w(1)>\cdots>w(n)$. 
Since in this case $\schub_w = x_1^{w(1)-1}x_2^{w(2)-1} \cdots x_n^{w(n)-1}$ it is enough to show $x_1^n \in I$. 
This is immediate from the equation $\prod_{i=2}^n (1-\overline{x_i}u) = \frac{1}{1-\overline{x_1}u} = \sum_{j \geq 0} \overline{x_1}^ju^j$ in $H_n [[ u ]]$ since the LHS has no terms of degrees $\geq n$ in $u$. 
\end{proof}

Schubert polynomials satisfy the following Cauchy identity: 
\begin{prop}[{{\cite[$(5.10)$]{Mac}}}]
$\sum_{w \in S_n} \schub_w(x) \schub_{ww_0}(y) = \prod_{i+j \leq n} (x_i+y_j)$. 
\end{prop}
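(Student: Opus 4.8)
The plan is to expand the right-hand side $P(x,y):=\prod_{i+j\le n}(x_i+y_j)$ in the Schubert basis in the $x$-variables and to pin down its coefficients by an induction on length. Since $\deg_{x_i}P\le n-i$, the basis fact quoted above (\cite[Proposition 2.5.3, Corollary 2.5.6]{Man}) gives unique polynomials $c_w(y)\in\ZZ[y_1,\dots,y_n]$, $w\in S_n$, with $P(x,y)=\sum_{w\in S_n}\schub_w(x)\,c_w(y)$; the claim is that $c_w(y)=\schub_{ww_0}(y)$. The first ingredient I would record is the extraction formula
\[
c_v(y)=\bigl(\der^x_v P\bigr)\big|_{x=0}\qquad(v\in S_n),
\]
where $\der^x_v:=\der^x_{a_1}\cdots\der^x_{a_k}$ denotes the composite of divided differences in the $x$-variables along a reduced word $v=s_{a_1}\cdots s_{a_k}$. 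This is immediate from the standard rule $\der^x_v\schub_w=\schub_{wv^{-1}}$ (when $\ell(wv^{-1})=\ell(w)-\ell(v)$, and $=0$ otherwise) together with $\schub_u(0)=\delta_{u,\mathrm{id}}$, which give $(\der^x_v\schub_w)|_{x=0}=\delta_{v,w}$ and hence $(\der^x_v P)|_{x=0}=\sum_w c_w(y)\delta_{v,w}=c_v(y)$ by $\ZZ[y]$-linearity of $\der^x_v$.

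Next I would prove $c_w=\schub_{ww_0}$ by induction on $\ell(w)$. The base case $w=\mathrm{id}$ is $c_{\mathrm{id}}(y)=P(0,y)=\prod_{j=1}^{n-1}y_j^{\,n-j}=\schub_{w_0}(y)=\schub_{\mathrm{id}\cdot w_0}(y)$. The inductive step rests on the identity
\[
\der^x_i\,P=\der^y_{\,n-i}\,P\qquad(1\le i\le n-1),
\]
which I would prove by a one-line Leibniz computation: $(x_i+y_{n-i})$ is a factor of $P$, and the product of the remaining factors involving $x_i$ or $x_{i+1}$ is symmetric in $x_i\leftrightarrow x_{i+1}$, so $\der^x_iP=P/(x_i+y_{n-i})$; by the same computation (equivalently, by the $x\leftrightarrow y$ symmetry of $P$ coupled with the reindexing $i\mapsto n-i$) one gets $\der^y_{n-i}P=P/(x_i+y_{n-i})$ as well. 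Granting this, for $w\neq\mathrm{id}$ pick a right descent $i$ of $w$ and put $w'=ws_i$, so $\ell(w')=\ell(w)-1$ and $c_{w'}=\schub_{w'w_0}$ by induction; extending a reduced word of $w'$ by $s_i$ gives $\der^x_w=\der^x_{w'}\der^x_i$, and since $\der^y$ commutes with $\der^x$ and with setting $x=0$,
\[
c_w(y)=\bigl(\der^x_{w'}\der^x_iP\bigr)\big|_{x=0}=\bigl(\der^x_{w'}\der^y_{n-i}P\bigr)\big|_{x=0}=\der^y_{n-i}\,c_{w'}(y)=\der^y_{n-i}\,\schub_{w'w_0}(y).
\]
Finally, $w_0$-conjugation ($w_0s_{n-i}w_0=s_i$ and $\ell(\,\cdot\,w_0)=\binom{n}{2}-\ell(\,\cdot\,)$) yields $ww_0=(w'w_0)s_{n-i}$ with $\ell(ww_0)=\ell(w'w_0)-1$, so $n-i$ is a right descent of $w'w_0$ and $\der^y_{n-i}\schub_{w'w_0}=\schub_{ww_0}$ by the defining recursion of Schubert polynomials; hence $c_w=\schub_{ww_0}$, completing the induction.

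The step that has to be gotten exactly right is the identity $\der^x_iP=\der^y_{n-i}P$ and the way it dovetails with the combinatorics of the longest element: one must see that the asymmetry of the region $\{i+j\le n\}$ forces the reindexing $i\mapsto n-i$ precisely so that divided differences in $x$ — dictated by the Schubert-basis bookkeeping on the left side — get converted into exactly the divided differences in $y$ that build up $\schub_{ww_0}$ on the right side. Everything else is formal: the extraction formula follows from the definitions, and the inductive step is routine once the key identity is in hand. (A shorter but less self-contained route to $\bigl(\der^x_v P\bigr)|_{x=0}=\schub_{vw_0}(y)$ is to recognize $P(x,y)$ as the specialization at $(x,-y)$ of the top \emph{double} Schubert polynomial $\prod_{i+j\le n}(x_i-y_j)$ and to apply the standard transition rules for double Schubert polynomials, the sign factors $(-1)^{\binom{n}{2}-\ell(v)}$ then cancelling; I prefer the argument above since it stays inside the framework of the excerpt.)
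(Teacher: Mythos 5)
The paper does not prove this Cauchy identity; it simply cites Macdonald and moves on, so there is no in-text argument to compare against. Your proof is correct and is a self-contained rendition of the standard divided-difference argument. The extraction formula $c_v(y)=(\der^x_v P)|_{x=0}$ does follow from $\der^x_v\schub_w=\schub_{wv^{-1}}$ (when $\ell(wv^{-1})=\ell(w)-\ell(v)$, else $0$) together with $\schub_u(0)=\delta_{u,\mathrm{id}}$. The Leibniz step is right: writing $P=\prod_{a+b\le n}(x_a+y_b)$, the factors $(x_i+y_b)$ with $b<n-i$ pair off with $(x_{i+1}+y_b)$ under $x_i\leftrightarrow x_{i+1}$, leaving only $(x_i+y_{n-i})$ unmatched, and by the same count only $(x_i+y_{n-i})$ is unmatched under $y_{n-i}\leftrightarrow y_{n-i+1}$; hence $\der^x_iP=P/(x_i+y_{n-i})=\der^y_{n-i}P$. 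The bookkeeping $w_0s_{n-i}w_0=s_i$ then gives $w'w_0s_{n-i}=ww_0$ with $\ell(ww_0)=\ell(w'w_0)-1$, so $\der^y_{n-i}\schub_{w'w_0}=\schub_{ww_0}$ by the defining recursion, and the base case $P(0,y)=\prod_{j<n}y_j^{\,n-j}=\schub_{w_0}(y)$ is correct, closing the induction. One thing worth making explicit: the initial expansion $P=\sum_{w\in S_n}\schub_w(x)\,c_w(y)$ over $\ZZ[y]$ requires that $\{\schub_w(x):w\in S_n\}$ be a $\ZZ$-basis of $\bigoplus_{\lmb\in\Lmb_n}\ZZ x^\lmb$ as a subspace of $\ZZ[x_1,\dots,x_n]$, not merely a basis of the quotient $H_n$; this amounts to knowing $\deg_{x_i}\schub_w\le n-i$ for all $w\in S_n$, which is standard but is not among the facts quoted from \cite{Man} and deserves a sentence (for instance it follows by descending induction from $\schub_{w_0}=x_1^{n-1}\cdots x_n^{0}$, since each $\der_i$ preserves $\bigoplus_{\lmb\in\Lmb_n}\ZZ x^\lmb$).
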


\subsection{Kra\'skiewicz-Pragacz modules}
Let $K$ be a field of characteristic zero. 
Let $\borel=\borel_n$ be the Lie algebra of all upper triangular $K$-matrices. 
and let $\csa \subset \borel$ and $\nplus \subset \borel$ be the subalgebra of all diagonal matrices
and the subalgebra of all strictly upper triangular matrices respectively. 
Let $\ualg(\borel)$ and $\ualg(\nplus)$ be the universal enveloping algebras of $\borel$ and $\nplus$ respectively. 
For a $\ualg(\borel)$-module $M$ and $\lmb = (\lmb_1, \ldots, \lmb_n) \in \ZZ^n$, 
let $M_\lmb = \{m \in M : hm=\langle \lmb,h \rangle m \;\text{($\forall h \in \csa$)}\}$ where $\langle \lmb,h \rangle = \sum \lmb_i h_i$. 
$M_\lmb$ is called the \textit{$\lmb$-weight space} of $M$. 
If $M_\lmb \neq 0$ then $\lmb$ is said to be a \textit{weight} of $M$. 
If $M=\bigoplus_{\lmb \in \ZZ^n} M_\lmb$ and each $M_\lmb$ has finite dimension, 
then we call that $M$ is a \textit{weight $\borel$-module}
and define $\ch(M)=\sum_{\lmb} \dim M_\lmb x^\lmb$. 
From here we only consider weight $\borel$-modules. 
For $1 \leq i \leq j \leq n$, let $\eij \in \borel$ be the matrix with $1$ at the $(i,j)$-position and all other coordinates $0$. 
Let $\rho=(n-1, n-2, \ldots, 0) \in \ZZ^n$ and $\one = (1, \ldots, 1) \in \ZZ^n$. 
Also let $\alpha_{ij}=(0, \ldots, 0, 1, 0, \ldots, 0, -1, 0, \ldots, 0) \in \ZZ^n$ for $1 \leq i < j \leq n$, where $1$ and $-1$ are at the $i$-th and $j$-th positions respectively. 

For $\lmb \in \ZZ^n$, let $K_\lmb$ denote the one-dimensional $\ualg(\borel)$-module where $h \in \csa$ acts by $\langle \lmb,h \rangle$ and $\nplus$ acts by $0$. 

In \cite{KP}, Kra\'skiewicz and Pragacz defined certain $\ualg(\borel)$-modules which here we call \textit{Kra\'skiewicz-Pragacz modules} or \textit{KP modules}. 
Here we use the following definition. 
Let $w \in S_\infty^{(n)}$. 
Let $K^n=\bigoplus_{1 \leq i \leq n} K u_i$ be the vector representation of $\borel$. 
For each $j \geq 1$, let $\{i < j : w(i)>w(j)\}=\{i_{j,1}, \ldots, i_{j,l_j}\}$ ($i_{j,1}<\cdots<i_{j,l_j}$), 
and let $u_w^{(j)}=u_{i_{j,1}} \wedge \cdots \wedge u_{i_{j,l_j}} \in \bigwedge^{l_j} K^n$. 
Let $u_w=u_w^{(1)} \otimes u_w^{(2)} \otimes \cdots \in \bigotimes_{j \geq 1} \bigwedge^{l_j} K^n$. 
Then the KP module $\smod_w$ associated to $w$ is defined as $\smod_w=\ualg(\borel)u_w=\ualg(\nplus)u_w$. 

\begin{eg}
If $w=s_i$, then $u_w=u_i$ and thus we have $\smod_w=K^i := \bigoplus_{1 \leq j \leq i} Ku_j$. 
\end{eg}

KP modules have the following property: 
\begin{thm}[{{\cite[Remark 1.6, Theorem 4.1]{KP}}}]
For any $w \in S_\infty^{(n)}$, $\smod_w$ is a weight $\borel$-module and $\ch(\smod_w)=\schub_w$. 
\end{thm}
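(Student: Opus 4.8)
The plan is to verify the two recursive properties defining Schubert polynomials, transported to the level of characters of KP modules. First I would establish the base case: when $w(1) > \cdots > w(n)$, the set $\{i < j : w(i) > w(j)\}$ equals $\{1, 2, \ldots, j-1\}$ precisely when $j \le n$ (and is empty for $j > n$ in the relevant range, or more carefully one tracks the descent structure), so that $u_w$ is, up to reindexing, the tensor product $u_1 \wedge \cdots \wedge u_{j-1}$ over the appropriate $j$'s. One then checks that $\ualg(\nplus)$ acting on this vector produces a module whose character is a monomial in the $x_i$, and that this monomial is exactly $x_1^{w(1)-1} \cdots x_n^{w(n)-1}$. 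Concretely, the weight of $u_w$ itself is computed by summing the weights $\alpha$-contributions of each wedge factor, and one shows $\ualg(\nplus) u_w$ has no other weights in this dominant case because the vector is already a highest weight vector of the relevant shape; a direct combinatorial check identifies the exponent vector with the code of $w$.

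The second and main step is to show that if $\ell(ws_i) < \ell(w)$ then $\der_i \ch(\smod_w) = \ch(\smod_{ws_i})$. The natural approach is to exhibit a relationship between the $\borel$-modules $\smod_w$ and $\smod_{ws_i}$ involving the parabolic subalgebra $\mathfrak{p}_i$ containing $\borel$ and the root vector $e_{i+1,i}$ (so generating a copy of $\mathfrak{sl}_2$ together with $e_{i,i+1}$). The operator $\der_i$ on characters corresponds, on the module side, to applying a Demazure-type operator: one considers $\ualg(\mathfrak{p}_i) \smod_{ws_i}$ or dually uses that $\smod_w$ is obtained from $\smod_{ws_i}$ by an $\mathfrak{sl}_2$-induction (a ``Demazure functor'' step). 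The key computation is that the generator $u_{ws_i}$ and $u_w$ are related: passing from $ws_i$ to $w$ (which lengthens, so swaps the values in positions $i, i+1$ making an inversion) changes exactly the wedge factors $u^{(i)}$ and $u^{(i+1)}$ in a controlled way, replacing $u_{ws_i}$ by $e_{i,i+1} \cdot u_{ws_i}$ up to the $\mathfrak{sl}_2$-string structure. From this one deduces that as $\mathfrak{sl}_2$-modules (for the copy spanned by $e_{i,i+1}, e_{i+1,i}$, and $h_i - h_{i+1}$), $\smod_w$ decomposes into strings each of which is the Demazure piece of the corresponding string in $\smod_{ws_i}$, and the effect on characters of taking such Demazure pieces is exactly $\der_i$.

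The hard part will be the second step: making precise the $\mathfrak{sl}_2$ / Demazure-operator relationship between $\smod_w$ and $\smod_{ws_i}$ at the level of modules, rather than just characters. The subtlety is that $\smod_w$ is defined as a cyclic $\ualg(\nplus)$-module on a specific tensor of wedges, and one must show this cyclic module is closed under the larger algebra $\ualg(\mathfrak{p}_i)$ in the appropriate sense and that its $\mathfrak{sl}_2$-string decomposition matches the claimed Demazure pieces; keeping track of how the wedge factors $u^{(j)}$ for the various $j$ interact under $e_{i+1,i}$ (which is not in $\borel$) requires a careful combinatorial bookkeeping of which $u_k$ appear in which factor. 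Once the string-by-string comparison is set up, the character identity $\der_i \ch(\smod_w) = \ch(\smod_{ws_i})$ is formal, and combined with the base case and uniqueness of the recursion for Schubert polynomials, the theorem follows. (Alternatively, one may cite the original computation of Kra\'skiewicz and Pragacz directly, as the statement is attributed to \cite{KP}; the above is the structure of that argument.)
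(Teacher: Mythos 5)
The paper does not prove this theorem; it is quoted verbatim from the cited paper of Kra\'skiewicz and Pragacz (their Remark~1.6 and Theorem~4.1), and no argument is supplied, so there is no internal proof for me to compare your attempt against. Your proposal is therefore a reconstruction, and it is worth noting that the strategy you describe (dominant base case, then match the recursion $\schub_{ws_i}=\der_i\schub_w$ against $\mathfrak{sl}_2$/Demazure structure on the modules) is not what Kra\'skiewicz and Pragacz actually do. Their argument is combinatorial and multilinear-algebraic: they exhibit an explicit basis of the Schubert functor image, indexed by flagged tableaux and established via a straightening law, and then identify the weight generating function of that basis with a known combinatorial expansion of $\schub_w$. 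The Lie-theoretic, Demazure-operator picture you sketch is closer in spirit to later work (e.g.\ Magyar's identification of Schubert modules with Demazure modules), so yours is a genuinely different route, and one that is viable in principle.

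Within your proposal there is one minor and one substantial issue. The minor one: for a dominant $w\in S_\infty^{(n)}$ the sets $\{i<j : w(i)>w(j)\}$ are \emph{not} all empty for $j>n$ (take $n=2$, $w=(4,2,1,3,5,\dots)$: for $j=3$ and $j=4$ the sets are $\{1,2\}$ and $\{1\}$). You hedged on this, and the conclusion survives because for dominant $w$ every such set is an initial segment $\{1,\dots,k\}$, so each wedge factor $u_1\wedge\cdots\wedge u_k$ is killed by $\nplus$ and $\smod_w=Ku_w$ still has the right (monomial) character; but this should be spelled out rather than waved at. The substantial issue, which you flag but do not resolve, is the precise identification of $\der_i$ with a module operation. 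Taking the Demazure piece of each $\mathfrak{sl}_2$-string corresponds on characters to the \emph{isobaric} Demazure operator $\pi_i=\der_i\circ(x_i\cdot)$, not to $\der_i$ itself; these differ by a weight shift, and $\smod_{ws_i}$ is not obtained from $\smod_w$ by a naive ``take string tops'' construction without building that shift (a twist by some $K_\mu$, or an equivalent renormalization of where each string's highest weight sits) into the argument. This is exactly the content of the hard step you identify, and until it is carried out the inductive step does not close; it is also precisely the kind of bookkeeping that the tableaux-basis approach of [KP] sidesteps entirely.
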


We slightly generalize the notion of KP modules. 
For $\lmb \in \ZZ^n$, take $k \in \ZZ$ so that $\lmb+k\one \in \nonneg^n$ and define $\smod_{\lmb} = \smod_w \otimes K_{-k\one}$ ($w \in S_\infty^{(n)}$, $\inv(w)=\lmb+k\one$). Note that this definition does not depend on the choice of $k$. 
We denote by $u_\lmb$ the generator of $\smod_\lmb$. 
We also write $\schub_\mu = \ch(\smod_\mu)$ for $\mu \in \ZZ^n$. 

\subsection{Highest weight categories}
Highest weight categories were first introduced by Cline, Parshall and Scott (\cite{CPS}). In this paper we use the following definition (cf. \cite{Los}):
\begin{defn}
Let $\catc$ be an abelian $K$-category with enough projectives and injectives, such that every object has finite length. 
Let $\Lmb = (\Lmb, \leq)$ be a finite poset indexing the simple objects $\{L(\lmb) : \lmb \in \Lmb \}$ in $\catc$ (called the \textit{weight poset}). Moreover, assume that a family of objects $\{\std(\lmb) : \lmb \in \Lmb\}$ called \textit{standard objects} is given.  
Then $\catc = (\catc, \Lmb, \{\std(\lmb)\})$ is called \textit{a highest weight category}
if the following axioms hold:
\begin{enumerate}
\item $\Hom_\catc(\std(\lmb), \std(\mu))=0$ unless $\lmb \leq \mu$. 
\item $\End_\catc(\std(\lmb)) \cong K$. 
\item Let $P(\lmb)$ denote the projective cover of $L(\lmb)$. Then there exists a surjection $P(\lmb) \surj \std(\lmb)$ such that its kernel admits a filtration whose successive quotients are of the form $\std(\nu)$ ($\nu > \lmb$). 
\end{enumerate}
\label{defn-hwc}
\end{defn}

Below we list some properties of highest weight categories which are used in this paper. For the proofs of these see Appendix (or references such as \cite{CPS}, \cite{Rin} and \cite[Appendix]{Don}: the formulations of highest weight categories and these properties vary with references, and so we collect both the definitions and its basic properties we use, along with their proofs, in the appendix in order to adapt them into our settings). 

For a highest weight category $\catc$ and $\lmb \in \Lmb$, let $\catc_{\leq \lmb}$ denote the full subcategory of objects whose simple constituents are all of the form $L(\mu)$ ($\mu \leq \lmb$). Let $\costd(\lmb) \in \catc$ be the injective hull of $L(\lmb)$ in the subcategory $\catc_{\leq \lmb}$. The objects $\costd(\lmb)$ are called \textit{costandard objects}. The standard modules can also be characterized in this way: $\std(\lmb)$ is the projective cover of $L(\lmb)$ in $\catc_{\leq \lmb}$. 
More generally, for any order ideal $\Lmb'$ containing $\lmb$ as one of its maximal elements, $\std(\lmb)$ is the projective cover of $L(\lmb)$ in the full subcategory $\catc_{\Lmb'}$ of modules whose simple constituents are $L(\mu)$ with $\mu \in \Lmb'$ (Proposition \ref{std-projectivity}). 

A \emph{standard} (resp.\ \emph{costandard}) filtration of an object $M \in \catc$ is a filtration such that each of its successive quotients is isomorphic to some standard (resp.\ costandard) object. For a highest weight category $\catc$ let $\catc^\std$ denote the subcategory of all objects having standard filtrations. 

\begin{prop}[Proposition \ref{std-costd-ext}]
\label{prop:hw-homext}
$\Hom_\catc(\std(\lmb), \costd(\mu)) \cong K$ if $\lmb=\mu$ and $0$ otherwise, 
and $\Ext^i_\catc(\std(\lmb), \costd(\mu))=0$ for $i \geq 1$. 
Hence if $M \in \catc$ has a standard (resp.\ costandard) filtration, 
then for any $\lmb \in \Lmb$, the number of times $\std(\lmb)$ (resp.\ $\costd(\lmb)$)
appears in (any) standard (resp.\ costandard) filtration is $\dim \Hom_\catc(M, \costd(\lmb))$ (resp.\ $\dim \Hom_\catc(\std(\lmb), M)$). 
\end{prop}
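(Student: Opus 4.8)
The plan is to establish the $\Hom$ and $\Ext^i$ ($i\ge 1$) assertions first, and then to derive the filtration--multiplicity statement as a formal consequence.

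\textbf{Step 1 (the $\Hom$ computation).} I would begin by noting that $[\std(\lmb):L(\lmb)]=1$. By axiom (3) of Definition~\ref{defn-hwc} there is a short exact sequence $0\to K(\lmb)\to P(\lmb)\to\std(\lmb)\to 0$ in which $K(\lmb)$ has a filtration with subquotients $\std(\nu)$, $\nu>\lmb$. Since $\Hom_\catc(\std(\nu),\std(\lmb))=0$ for $\nu\not\le\lmb$ by axiom (1), a short induction on the length of that filtration (using left-exactness of $\Hom_\catc(-,\std(\lmb))$) gives $\Hom_\catc(K(\lmb),\std(\lmb))=0$, hence $\End_\catc(\std(\lmb))\xrightarrow{\sim}\Hom_\catc(P(\lmb),\std(\lmb))$; the dimension of the right-hand side is $[\std(\lmb):L(\lmb)]$, and the left-hand side is $K$ by axiom (2). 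Now let $f\colon\std(\lmb)\to\costd(\mu)$ be nonzero. Its image is a quotient of $\std(\lmb)$, so $\operatorname{im}f$ has head $L(\lmb)$ and all of its composition factors lie among $\{L(\nu):\nu\le\lmb\}$; it is also a nonzero subobject of $\costd(\mu)$, so it meets the essential simple socle $L(\mu)$, whence $L(\mu)$ is a composition factor of $\operatorname{im}f$. Thus $\lmb\le\mu$ and $\mu\le\lmb$, so $\lmb=\mu$; in that case $[\operatorname{im}f:L(\lmb)]\le[\std(\lmb):L(\lmb)]=1$, and a finite-length module with simple head equal to simple socle equal to $L(\lmb)$ in which $L(\lmb)$ occurs once is simple, so $\operatorname{im}f\cong L(\lmb)$ and $f$ is a scalar multiple of the canonical composite $\std(\lmb)\surj L(\lmb)\inj\costd(\lmb)$. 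This yields the $\Hom$ formula.

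\textbf{Step 2 (reducing $\Ext$ to degree $1$).} Applying $\Hom_\catc(-,\costd(\mu))$ to the sequence $0\to K(\lmb)\to P(\lmb)\to\std(\lmb)\to 0$ and using projectivity of $P(\lmb)$ gives $\Ext^i_\catc(\std(\lmb),\costd(\mu))\cong\Ext^{i-1}_\catc(K(\lmb),\costd(\mu))$ for $i\ge 2$. Since $K(\lmb)$ is standardly filtered, an induction on cohomological degree, nested with an induction on the length of a standard filtration, reduces the whole statement to the vanishing of $\Ext^1_\catc(\std(\nu),\costd(\kappa))$ for all $\nu,\kappa$.

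\textbf{Step 3 (vanishing of $\Ext^1$).} Here I would split into two cases. If $\mu\not>\lmb$, the long exact sequence of Step 2 exhibits $\Ext^1_\catc(\std(\lmb),\costd(\mu))$ as a quotient of $\Hom_\catc(K(\lmb),\costd(\mu))$, and the latter vanishes because $K(\lmb)$ is filtered by $\std(\nu)$ with $\nu>\lmb$ (hence $\nu\ne\mu$) and $\Hom_\catc(\std(\nu),\costd(\mu))=0$ for $\nu\ne\mu$ by Step 1. If $\mu>\lmb$, I would run the dual argument, using a short exact sequence $0\to\costd(\mu)\to I(\mu)\to C(\mu)\to 0$ in which $I(\mu)$ is the injective hull of $L(\mu)$ and $C(\mu)$ is filtered by $\costd(\nu)$, $\nu>\mu$ --- the injective-side counterpart of axiom (3) (equivalently, axiom (3) applied to $\catc^{\mathrm{op}}$, whose weight poset is again $\Lmb$ and whose standard objects are the $\costd(\lmb)$). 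Then $\Ext^1_\catc(\std(\lmb),\costd(\mu))$ is a quotient of $\Hom_\catc(\std(\lmb),C(\mu))$, which vanishes since $\nu>\mu>\lmb$ forces $\Hom_\catc(\std(\lmb),\costd(\nu))=0$.

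\textbf{Step 4 (the multiplicity statement) and the main obstacle.} Given $M\in\catc$ with a standard filtration $0=M_0\subset M_1\subset\cdots\subset M_k=M$ and $M_s/M_{s-1}\cong\std(\lmb^{(s)})$, applying $\Hom_\catc(-,\costd(\lmb))$ and using $\Hom_\catc(\std(\nu),\costd(\lmb))=\delta_{\nu\lmb}K$ together with $\Ext^1_\catc(\std(\lmb^{(s)}),\costd(\lmb))=0$ makes each connecting sequence short exact, so $\dim\Hom_\catc(M,\costd(\lmb))=\#\{s:\lmb^{(s)}=\lmb\}$; since the left-hand side is intrinsic to $M$, the count is independent of the chosen filtration. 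The costandard case is dual, using $\Hom_\catc(\std(\lmb),-)$ and $\Ext^1_\catc(\std(\lmb),\costd(\nu))=0$. I expect the main difficulty to lie in the $\Ext^1$ vanishing: a single dimension count there would circle back on BGG-reciprocity, and the argument only goes through because of the case split $\mu\not>\lmb$ versus $\mu>\lmb$, handled respectively via the projective presentation of $\std(\lmb)$ and the injective copresentation of $\costd(\mu)$; a secondary point requiring care is having that injective copresentation available, which is why one passes to $\catc^{\mathrm{op}}$ (or invokes the injective-side analogue of Proposition~\ref{std-projectivity}).
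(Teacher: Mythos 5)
Your Steps 1, 2, and 4 are sound and essentially match the paper: the head/socle argument for the $\Hom$ formula, dimension shifting via the projective presentation to reduce to $\Ext^1$, and the long-exact-sequence argument for the multiplicity count. The gap is in Step 3, case $\mu>\lmb$.

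There you invoke a short exact sequence $0\to\costd(\mu)\to I(\mu)\to C(\mu)\to 0$ with $C(\mu)$ filtered by $\costd(\nu)$ ($\nu>\mu$). This injective-side analogue of axiom (3) is \emph{not} one of the axioms in Definition~\ref{defn-hwc}, and the route you sketch for obtaining it --- ``pass to $\catc^{\mathrm{op}}$'' --- is circular: to verify axiom (3) for $(\catc^{\mathrm{op}},\Lmb,\{\costd(\lmb)\})$ is precisely to produce the injective copresentation you want, and the usual proofs of that fact (BGG reciprocity, counting arguments, or a direct construction) themselves rest on the $\Ext^1$-vanishing being established here. So as written, the hard case is not closed.

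The paper avoids this entirely by using the \emph{definitional} injectivity of $\costd(\mu)$ rather than an injective copresentation. The case split is $\lmb\le\mu$ versus $\lmb\not\le\mu$. When $\lmb\le\mu$, the composition factors of $\std(\lmb)$ are $L(\nu)$ with $\nu\le\lmb\le\mu$, so $\std(\lmb)\in\catc_{\le\mu}$; since $\catc_{\le\mu}$ is closed under extensions, $\Ext^1_\catc(\std(\lmb),\costd(\mu))=\Ext^1_{\catc_{\le\mu}}(\std(\lmb),\costd(\mu))$, and the latter vanishes because $\costd(\mu)$ is by definition injective in $\catc_{\le\mu}$. When $\lmb\not\le\mu$, one argues dually using the projectivity of $\std(\lmb)$ in $\catc_{\not>\lmb}$, which is essentially your ``easy'' case via $\Hom(K(\lmb),\costd(\mu))=0$. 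Replacing your Step 3 hard case with the $\catc_{\le\mu}$-injectivity argument removes the unproved hypothesis and makes the proof complete without any detour through $\catc^{\mathrm{op}}$.
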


\begin{prop}[Proposition \ref{filtr-lem}]
\label{prop:hw-quot}
Let $M \in \catc^\std$ and let $\Lmb' \subset \Lmb$ be an order ideal.
Let $M^{\Lmb'}$ be the largest quotient of $M$
each of whose simple constituent is isomorphic to some $L(\lmb)$ ($\lmb \in \Lmb'$). Note that the existence of such a quotient follows from the finiteness of the length of $M$. 
Then $M^{\Lmb'}$ and $\Ker(M \surj M^{\Lmb'})$ have filtrations whose subquotients are of the form $\std(\lmb)$ ($\lmb \in \Lmb'$) and $\std(\mu)$ ($\mu \in \Lmb \smallsetminus \Lmb'$) respectively. 
If $\lmb \in \Lmb'$ is a maximal element in $\Lmb'$, then $\Ker(M^{\Lmb'} \surj M^{\Lmb' \smallsetminus \{\lmb\}})$ is isomorphic to a direct sum of some copies of $\std(\lmb)$ (note that the former statement follows from this one). 
\end{prop}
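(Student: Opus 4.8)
The plan is to reduce the proposition to a single ``remove one maximal weight'' step and prove that step by reordering a standard filtration so that the relevant standard objects sit at the bottom.

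I would first record two auxiliary facts. Using axiom (3) of Definition \ref{defn-hwc}, write $0 \to Y \to P(\lmb) \to \std(\lmb) \to 0$ with $Y$ filtered by $\std(\nu)$'s ($\nu > \lmb$); since $P(\lmb)$ is projective, $\Ext^1_\catc(\std(\lmb),\std(\mu))$ is a quotient of $\Hom_\catc(Y,\std(\mu))$, which vanishes (by axiom (1) and left-exactness of $\Hom_\catc(-,\std(\mu))$) unless some $\nu > \lmb$ satisfies $\nu \leq \mu$. Thus $\Ext^1_\catc(\std(\lmb),\std(\mu)) = 0$ whenever $\mu \not> \lmb$; in particular $\Ext^1_\catc(\std(\lmb),\std(\lmb)) = 0$, so any object carrying a filtration all of whose subquotients are $\cong \std(\lmb)$ is in fact $\cong \std(\lmb)^{\oplus r}$ for some $r$ (split off the top copy and induct). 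Secondly, the usual reordering observation: in a filtration $A \subset B \subset C$ with $B/A \cong \std(\alpha)$, $C/B \cong \std(\beta)$ and $\Ext^1_\catc(\std(\beta),\std(\alpha)) = 0$, the extension $0 \to \std(\alpha) \to C/A \to \std(\beta) \to 0$ splits, so $B$ may be replaced by some $B'$ with $B'/A \cong \std(\beta)$, $C/B' \cong \std(\alpha)$, leaving the rest of the filtration untouched.

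The core is the following one-step claim: \emph{let $N \in \catc^\std$, let $\Gamma \subseteq \Lmb$ be an order ideal containing every subquotient-label of a fixed standard filtration of $N$, and let $\gamma$ be maximal in $\Gamma$; then $\Ker(N \surj N^{\Gamma \smallsetminus \{\gamma\}})$ is a direct sum of copies of $\std(\gamma)$, and $N^{\Gamma \smallsetminus \{\gamma\}} \cong N/\Ker$ carries a standard filtration with subquotients $\std(\nu)$ ($\nu \in \Gamma \smallsetminus \{\gamma\}$).} To prove it, note $\Ext^1_\catc(\std(\gamma),\std(\nu)) = 0$ for every label $\nu$ (as $\nu \in \Gamma$ and $\gamma$ is maximal in $\Gamma$, we cannot have $\nu > \gamma$), so by repeated reordering we may assume all the $\std(\gamma)$-subquotients lie at the bottom, forming (by the first fact) a submodule $N_0 \cong \std(\gamma)^{\oplus r}$ with $N/N_0$ filtered by $\std(\nu)$'s, $\nu \in \Gamma \smallsetminus \{\gamma\}$. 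Each such $\std(\nu)$ has composition factors $L(\nu')$ with $\nu' \leq \nu$, hence with $\nu' \in \Gamma \smallsetminus \{\gamma\}$ (order ideal; and $\nu' = \gamma$ would force $\gamma < \nu$, impossible), so $N/N_0$ is a quotient of $N$ with composition factors in $\Gamma \smallsetminus \{\gamma\}$, whence $N_0 \supseteq \Ker(N \surj N^{\Gamma \smallsetminus \{\gamma\}})$ by maximality of $N^{\Gamma \smallsetminus \{\gamma\}}$. Conversely, any nonzero quotient of $N_0 \cong \std(\gamma)^{\oplus r}$ has $L(\gamma)$ in its head, so $N_0$ has no nonzero quotient with composition factors inside $\Gamma \smallsetminus \{\gamma\}$, forcing its image in $N^{\Gamma \smallsetminus \{\gamma\}}$ to be $0$, i.e. $N_0 \subseteq \Ker(N \surj N^{\Gamma \smallsetminus \{\gamma\}})$. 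Hence $\Ker(N \surj N^{\Gamma \smallsetminus \{\gamma\}}) = N_0 \cong \std(\gamma)^{\oplus r}$ and $N^{\Gamma \smallsetminus \{\gamma\}} \cong N/N_0$, as claimed.

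Finally I would deduce the proposition. Since $\Lmb \smallsetminus \Lmb'$ is an up-set, I can pick a chain of order ideals $\Lmb = \Gamma_0 \supsetneq \Gamma_1 \supsetneq \cdots \supsetneq \Gamma_m = \Lmb'$ with $\Gamma_{k+1} = \Gamma_k \smallsetminus \{\gamma_k\}$, each $\gamma_k$ maximal in $\Gamma_k$ and not in $\Lmb'$, so that $\{\gamma_0, \ldots, \gamma_{m-1}\} = \Lmb \smallsetminus \Lmb'$. Set $M_0 = M$ and $M_{k+1} = M_k^{\Gamma_{k+1}}$; since the $\Gamma_k$ are nested one checks $M_k = M^{\Gamma_k}$, so $M_m = M^{\Lmb'}$. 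Applying the one-step claim repeatedly — it applies since, by induction, $M_k \in \catc^\std$ with a standard filtration labelled in $\Gamma_k$ — shows $\Ker(M_k \surj M_{k+1})$ is a direct sum of copies of $\std(\gamma_k)$ and $M_{k+1} \in \catc^\std$ with a standard filtration labelled in $\Gamma_{k+1}$. Thus $M^{\Lmb'} = M_m$ has a standard filtration with subquotients $\std(\lmb)$ ($\lmb \in \Lmb'$), while $\Ker(M \surj M^{\Lmb'})$, being filtered by the successive $\Ker(M_k \surj M_{k+1})$, has one with subquotients $\std(\mu)$ ($\mu \in \Lmb \smallsetminus \Lmb'$). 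For the last assertion, apply the one-step claim to $N = M^{\Lmb'}$ (now known to lie in $\catc^\std$ with labels in $\Lmb'$), $\Gamma = \Lmb'$ and $\gamma = \lmb$, using $(M^{\Lmb'})^{\Lmb' \smallsetminus \{\lmb\}} = M^{\Lmb' \smallsetminus \{\lmb\}}$. The one point that needs care is the reordering step together with the identification of the bottom block $N_0$ with $\Ker(N \surj N^{\Gamma \smallsetminus \{\gamma\}})$; everything else is routine bookkeeping.
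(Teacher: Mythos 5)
Your proof is correct, and it takes a genuinely different route from the paper's. The paper proves the one-step claim homologically: using Remark \ref{quot-rem} it observes that the head of $N := \Ker(M^{\Lmb'} \surj M^{\Lmb' \smallsetminus \{\lmb\}})$ consists only of copies of $L(\lmb)$, and then establishes that $N$ is projective in $\catc_{\Lmb'}$ (hence the projective cover of its head, i.e.\ a direct sum of copies of $\std(\lmb)$) by showing $\Ext^1(N, L(\mu)) = 0$ for all $\mu \in \Lmb'$, via a chain of long exact sequences feeding through $\costd(\mu)$ and $\costd(\mu)/L(\mu)$, with Proposition \ref{std-costd-ext} supplying the key vanishings and a downward induction on $|\Lmb'|$ giving the standard filtration on $M''$. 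You avoid costandard objects and $\Ext$-against-simples entirely: you reorder a fixed standard filtration so that all $\std(\gamma)$-layers sit at the bottom (using $\Ext^1(\std(\gamma),\std(\nu))=0$ for $\nu \not> \gamma$, i.e.\ Corollary \ref{ext-std-std}), observe the bottom block $N_0$ is a direct sum because $\Ext^1(\std(\gamma),\std(\gamma))=0$, and then identify $N_0$ with the kernel by the two-inclusion argument. What each buys: your argument is more elementary and self-contained, depending only on $\Ext^1$-vanishing between standards and head/socle bookkeeping, and it produces the filtration of $\Ker(M \surj M^{\Lmb'})$ very explicitly; the paper's argument is shorter given that Proposition \ref{std-costd-ext} is already on hand and immediately yields the characterization-by-$\Ext$ criterion (Corollary \ref{filtr-crit}) as a byproduct. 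The delicate point in your proof, which you flag, is the identification $N_0 = \Ker(N \surj N^{\Gamma \smallsetminus \{\gamma\}})$; the reverse inclusion needs the fact that every nonzero quotient of $\std(\gamma)^{\oplus r}$ has $L(\gamma)$ in its head, which does hold because the head of any quotient is a quotient of the head.
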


An object $M \in \catc$ is called \textit{a tilting} or \textit{a tilting object} if $M \in \catc^\std$ and $\Ext^1(\std(\lmb), M) = 0$ for all $\lmb \in \Lmb$. 
In \cite{Rin} Ringel showed the following results: 

\begin{prop}[Proposition \ref{indec-tilt}]
For each $\lmb$, there exists a unique (up to isomorphism) tilting $T(\lmb)$ which is indecomposable and has the property that there exists an injection $\std(\lmb) \inj T(\lmb)$ whose cokernel admits a filtration by the objects of the form $\std(\mu)$ ($\mu < \lmb$). 
Moreover, every tilting is a direct sum of the objects $T(\lmb)$ ($\lmb \in \Lmb$). 
\end{prop}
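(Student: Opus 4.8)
The plan is to adapt Ringel's original argument to the axioms above. I will use repeatedly: the vanishing $\Ext^1_\catc(\std(\mu),\std(\nu))=0$ unless $\mu<\nu$ (extracted from axiom (3) by applying $\Hom_\catc(-,\std(\nu))$ to the projective presentation $P(\mu)\surj\std(\mu)$), the multiplicity formula of Proposition \ref{prop:hw-homext}, the quotient lemma Proposition \ref{prop:hw-quot}, and the facts that $\catc$ is Krull--Schmidt (every object has finite length) and that $\catc^\std$ is closed under direct summands.

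\emph{Existence.} Fix $\lmb$. I would first produce \emph{some} tilting $T$ with an embedding $\std(\lmb)\inj T$ whose cokernel is filtered by $\std(\mu)$, $\mu<\lmb$, by iterated universal extensions. Enumerate $\{\mu:\mu<\lmb\}=\{\mu_1,\dots,\mu_r\}$ in an order refining the opposite of $\leq$ (so $\mu_a<\mu_b$ forces $a>b$) and set $M_0=\std(\lmb)$. At step $a$, if $k:=\dim\Ext^1_\catc(\std(\mu_a),M_{a-1})>0$, take the universal extension $0\mor M_{a-1}\mor M_a\mor\std(\mu_a)^{\oplus k}\mor 0$ attached to a basis of that Ext group; using $\End_\catc(\std(\mu_a))\cong K$ and $\Ext^1_\catc(\std(\mu_a),\std(\mu_a))=0$ one gets $\Ext^1_\catc(\std(\mu_a),M_a)=0$, and since each already-treated $\mu_b$ ($b<a$) has $\mu_b\not<\mu_a$, hence $\Ext^1_\catc(\std(\mu_b),\std(\mu_a))=0$, no earlier vanishing is destroyed. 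After $r$ steps $T:=M_r\in\catc^\std$ has a standard filtration with $\std(\lmb)$ at the bottom and all other subquotients $\std(\mu)$, $\mu<\lmb$; and $\Ext^1_\catc(\std(\mu),T)=0$ for every $\mu$ (for $\mu<\lmb$ by construction, and for $\mu=\lmb$ as well as $\mu\not\leq\lmb$ by the Ext-vanishing applied to the subquotients of this filtration). So $T$ is tilting, and by Proposition \ref{prop:hw-homext} the multiplicity of $\std(\lmb)$ in $T$ is $\dim\Hom_\catc(T,\costd(\lmb))=1$.

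\emph{The indecomposable summand, and uniqueness.} Decompose $T$ into indecomposables, each again tilting (since $\catc^\std$ and the vanishing of $\Ext^1_\catc(\std(\lmb),-)$ pass to summands). Exactly one summand $T(\lmb)$ has $\std(\lmb)$ among its standard subquotients. Applying Proposition \ref{prop:hw-quot} to $T(\lmb)$ with the order ideal $\{\mu:\mu\not\geq\lmb\}$, the kernel of the resulting quotient has standard subquotients that are simultaneously $\geq\lmb$ and in the standard support of $T(\lmb)$, hence equals $\std(\lmb)$; this yields the embedding $\std(\lmb)\inj T(\lmb)$ with cokernel filtered by $\std(\mu)$, $\mu<\lmb$. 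For uniqueness, let $S$ be another indecomposable tilting with an embedding $\std(\lmb)\inj S$ whose cokernel is filtered by $\std(\mu)$, $\mu<\lmb$. From $0\mor\std(\lmb)\mor T(\lmb)\mor C\mor 0$ and $\Ext^1_\catc(C,S)=0$ ($C$ standardly filtered, $S$ tilting) the inclusion $\std(\lmb)\inj S$ extends to $f:T(\lmb)\mor S$; symmetrically one gets $g:S\mor T(\lmb)$ extending $\std(\lmb)\inj T(\lmb)$ (here one also uses $\Hom_\catc(\std(\lmb),T(\lmb))\cong K$, which follows from axioms (1)--(2) and the filtration of $C$). Then $gf\in\End_\catc(T(\lmb))$ restricts to the identity on $\std(\lmb)\neq 0$, so it is not nilpotent; as $\End_\catc(T(\lmb))$ is local, $gf$ is invertible, hence $S\cong T(\lmb)\oplus(\cdots)$, forcing $S\cong T(\lmb)$ by indecomposability.

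\emph{Every tilting is a sum of the $T(\lmb)$, and the main obstacle.} Given a tilting $M$, decompose it into indecomposable tilting summands $M_i$; for each, pick $\lmb$ maximal among the $\mu$ with $(M_i:\std(\mu))\neq 0$. Proposition \ref{prop:hw-quot} (ideal $\{\mu:\mu\not\geq\lmb\}$) gives $0\mor\std(\lmb)^{\oplus k}\mor M_i\mor Q_i\mor 0$ with $Q_i$ standardly filtered and $k\geq 1$; the same lifting trick (via $\Ext^1_\catc(Q_i,T(\lmb))=0$, $\Ext^1_\catc(C,M_i)=0$, $\Hom_\catc(\std(\lmb),T(\lmb))\cong K$, and $\End_\catc(M_i)$ local) produces maps between $T(\lmb)$ and $M_i$ composing to an automorphism of $T(\lmb)$, so $T(\lmb)$ is a summand of $M_i$, whence $M_i\cong T(\lmb)$. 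I expect the two delicate points to be: choosing the processing order in the existence step so that annihilating $\Ext^1_\catc(\std(\mu_a),-)$ never revives an extension killed earlier; and the small lemma, used in both uniqueness-type arguments, that an endomorphism of an indecomposable object which restricts to a nonzero scalar on the distinguished sub $\std(\lmb)$ is automatically an isomorphism. Everything else is formal manipulation with long exact Ext sequences together with Propositions \ref{prop:hw-homext} and \ref{prop:hw-quot}.
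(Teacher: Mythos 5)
Your argument is correct and follows the same high-level strategy as the paper: build a tilting containing $\std(\lmb)$ by iterated (universal) extensions, pass to an indecomposable summand via Krull--Schmidt, and use a Fitting-type argument (non-nilpotent endomorphism of an indecomposable of finite length is an isomorphism) for uniqueness and for the decomposition of arbitrary tiltings. The only meaningful differences are bookkeeping: for existence the paper's Proposition \ref{tilt-embed} processes weights reactively via a decreasing ``defect'' order ideal with single non-split extensions, whereas you fix a linearization of $\{\mu<\lmb\}$ refining the opposite of $\leq$ and use universal extensions at each step; and for uniqueness you give an explicit lifting argument, whereas the paper deduces it from the decomposition statement together with the observation that $\lmb$ is recovered from $T(\lmb)$ as the unique maximal element of $\supp T(\lmb)$. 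Both variants are standard and correct.
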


\begin{prop}[Proposition \ref{ringel-ext}, \ref{ringel-hwc}, \ref{ringel-dual}]
\label{prop:generaldual}
Let $T$ be a tilting which contains every $T(\lmb)$ at least once as its direct summand (such $T$ is called a \emph{full tilting}). 
Then the category $\catc^\vee$ of all finite dimensional left $End_{\catc}(T)$-modules (which in fact does not depend, up to equivalence, on a choice of $T$), called the \emph{Ringel dual} of $\catc$, is again a highest weight category
with standard objects $\std^\vee(\lmb) = \Hom_\catc(\std(\lmb), T)$ and the weight poset $\Lmb^{\mathrm{op}}$, the opposite poset of $\Lmb$. 
Moreover, the contravariant functor $F: \catc \mor \catc^\vee$ given by $FM=\Hom_\catc(M,T)$ restricts to a contravariant equivalence between $\catc^\std$ and $(\catc^\vee)^\std$, and gives an isomorphism $\Ext^i_\catc(M,N) \cong \Ext^i_{\catc^\vee}(FN,FM)$ for any $M, N \in \catc^\std$ and any $i \geq 0$. 
\end{prop}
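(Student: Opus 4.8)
The statement is Ringel's theorem, and the plan is to run its proof in the form recorded in the appendix. Write $A=\End_\catc(T)$ and $F=\Hom_\catc(-,T)$, viewed as a contravariant functor from $\catc$ to the category $\catc^\vee$ of finite-dimensional left $A$-modules, the $A$-action being by post-composition. The one homological input is this: since $T$ is a tilting it has a costandard filtration, so Proposition~\ref{prop:hw-homext} together with an induction along a standard filtration gives $\Ext^i_\catc(M,T)=0$ for all $M\in\catc^\std$ and all $i\geq 1$. Hence $F$ carries a short exact sequence with all three terms in $\catc^\std$ to a short exact sequence of $A$-modules; and since the submodules occurring in a standard filtration again lie in $\catc^\std$, it follows at once that $F$ sends $\catc^\std$ into the full subcategory of $A$-modules filtered by the objects $\std^\vee(\lmb):=F\std(\lmb)$.

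First I would pin down which $A$-modules occur. Because $T$ is a finite direct sum of the $T(\lmb)$ with every $T(\lmb)$ appearing, the restriction of $F$ to $\mathrm{add}(T)$ is fully faithful (a Yoneda-type identity $\Hom_A(\Hom_\catc(X,T),\Hom_\catc(Y,T))\cong\Hom_\catc(Y,X)$), so $FT=\Hom_\catc(T,T)=A$ as a left $A$-module and each $P^\vee(\lmb):=FT(\lmb)$ is an indecomposable projective $A$-module with local endomorphism ring; these exhaust the indecomposable projectives, so putting $L^\vee(\lmb)=\hd P^\vee(\lmb)$ gives a complete irredundant list of simple $A$-modules indexed by $\Lmb$. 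The technical heart is now the existence, for each $N\in\catc^\std$, of a coresolution $0\to N\to T^0\to T^1\to\cdots$ by tiltings all of whose cosyzygies lie in $\catc^\std$; this I would prove by the standard induction on $|\Lmb|$ (remove a maximal weight), using Proposition~\ref{prop:hw-quot} and the fact that each $P(\lmb)$ lies in $\catc^\std$. Applying the exact functor $F$ turns such a coresolution of $N$ into a projective resolution of $FN$ by the $A$-modules $FT^i$; and then, for $M\in\catc^\std$, identifying $\Hom_A(FT^\bullet,FM)\cong\Hom_\catc(M,T^\bullet)$ (the $\mathrm{add}(T)$ case) and comparing with the computation of $\Ext^\bullet_\catc(M,N)$ through the same coresolution (which is $\Hom_\catc(M,-)$-acyclic by the $\Ext$-vanishing above) yields the natural isomorphism $\Ext^i_\catc(M,N)\cong\Ext^i_{\catc^\vee}(FN,FM)$ for all $i\geq 0$; in particular, taking $i=0$, $F$ is fully faithful on $\catc^\std$.

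Granting full faithfulness, the axioms of Definition~\ref{defn-hwc} for $\catc^\vee$ with weight poset $\Lmb^{\mathrm{op}}$, standard objects $\std^\vee(\lmb)$, and simples $L^\vee(\lmb)$ follow formally. Axioms (1) and (2) come from $\Hom_{\catc^\vee}(\std^\vee(\lmb),\std^\vee(\mu))\cong\Hom_\catc(\std(\mu),\std(\lmb))$, which is $K$ for $\lmb=\mu$ and zero unless $\mu\leq\lmb$ in $\Lmb$, i.e. unless $\lmb\leq\mu$ in $\Lmb^{\mathrm{op}}$. For axiom (3) I would apply $F$ to the defining embedding $0\to\std(\lmb)\to T(\lmb)\to Q\to 0$ with $Q$ filtered by $\std(\mu)$, $\mu<\lmb$, obtaining $0\to FQ\to P^\vee(\lmb)\to\std^\vee(\lmb)\to 0$ with $FQ$ filtered by $\std^\vee(\mu)$, $\mu<\lmb$ (that is, $\mu>\lmb$ in $\Lmb^{\mathrm{op}}$); since the nonzero module $\std^\vee(\lmb)$ is a quotient of the projective cover $P^\vee(\lmb)$ of the simple $L^\vee(\lmb)$, it has head $L^\vee(\lmb)$, so the displayed surjection is its projective cover, which is exactly axiom (3).

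Finally, $F$ maps $\catc^\std$ into $(\catc^\vee)^\std$ and is fully faithful there; it is also essentially surjective onto $(\catc^\vee)^\std$, as one sees by lifting a standard filtration of a given $V\in(\catc^\vee)^\std$ one step at a time: an extension $0\to FM'\to V\to\std^\vee(\lmb)\to 0$ with $M'\in\catc^\std$ corresponds under the $\Ext^1$-isomorphism to an extension $0\to\std(\lmb)\to M\to M'\to 0$ in $\catc$, whose middle term $M$ lies in $\catc^\std$ and, by naturality of that isomorphism, satisfies $FM\cong V$. Hence $F$ restricts to the asserted contravariant equivalence $\catc^\std\overset{\sim}{\mor}(\catc^\vee)^\std$ and transports the $\Ext$-isomorphism; that $\catc^\vee$ does not depend on the choice of full tilting is the remark that any two full tiltings have the same additive closure and hence Morita-equivalent endomorphism algebras. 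The one step I expect to require real work is the construction of the tilting coresolutions of objects of $\catc^\std$ (and, if the homological statements are wanted in finite terms, their finiteness); once that is in place the rest is essentially formal manipulation of the functor $F$.
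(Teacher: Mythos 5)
Your proposal is correct and follows essentially the same route as the paper's own proofs of Propositions~\ref{ringel-ext}, \ref{ringel-hwc} and \ref{ringel-dual} in the appendix: the Yoneda-type identity on $\mathrm{add}(T)$ is the paper's Lemma~\ref{hommt-lemma}, the tilting coresolution and the balancing argument for $\Ext$ match Proposition~\ref{tilt-resol} and the proof of Proposition~\ref{ringel-ext}, and the verification of the highest-weight axioms and the induction-on-length argument for essential surjectivity coincide with the proofs of Propositions~\ref{ringel-hwc} and \ref{ringel-dual}.
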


\section{KP modules and highest weight categories}
In this section we introduce certain highest weight categories whose standard modules are KP modules, using the results from \cite{W}. As we noted in the introduction, the ordering of the weights used here is different from the usual root order on the weight lattices. 
\label{sec:kp-hw}

Let us introduce two ordering relations on $\nonneg^n$ as follows. 
For $\lmb=\inv(w)$ and $\mu=\inv(v)$ ($\lmb, \mu \in \nonneg^n, w, v \in S_\infty^{(n)}$) with $\ell(w)=\ell(v)$, 
we define $\lmb < \mu \iff w^{-1} \lex> v^{-1}$ and $\lmb <' \mu \iff w^{-1} \rlex> v^{-1}$ (if $\ell(w) \neq \ell(v)$ we define $\lmb$ and $\mu$ to be incomparable). 
Here for two permutations $x$ and $y$, 
$x \lex> y$ (resp.\ $x \rlex> y$) if there exists an $i$ such that $x(j) = y(j)$ for any $j < i$ (resp.\ $j > i$) and $x(i)>y(i)$. 
We write $\lmb \prec \mu$ if both $\lmb < \mu$ and $\lmb <' \mu$ hold. 
For general $\lmb, \mu \in \ZZ^n$, take $k \in \ZZ$ so that $\lmb+k\one, \mu+k\one \in \nonneg^n$, and write $\lmb < \mu$ (resp.\ $\lmb <' \mu$, $\lmb \prec \mu$) iff $\lmb+k\one < \mu+k\one$ (resp.\ $\lmb+k\one <' \mu+k\one$, $\lmb+k\one \prec \mu+k\one$). This definition does not depend on $k$. 
As we have shown in \cite[Lemma 6.2]{W}, $\lmb < \mu$ if and only if $\rho-\lmb <' \rho-\mu$. 

It can be seen that $\Lmb_n$ is an order ideal of $\ZZ^n$ with respect to $\prec$, using \cite[Lemma 6.2]{W} and the argument in the proof of \cite[Lemma 6.3]{W}. 
\begin{thm}
Let $\Lmb \subset \ZZ^n$ be a finite order ideal with respect to the ordering $\prec$. 
Let $\catc_\Lmb$ be the category of all weight $\borel$-modules 
whose weights are in $\Lmb$ (note that this notation agrees with the notation in the theory of highest weight categories, since the simple objects in the category of weight  $\borel$-modules are just the one-dimensional modules $K_\mu$). 
Then $\catc_\Lmb$ is a highest weight category with weight poset $(\Lmb, \prec)$
and standard objects $\{\smod_\lmb : \lmb \in \Lmb \}$. 
In particular, $\catc_n := \catc_{\Lmb_n}$ is a highest weight category. 
\label{kp-hwc}
\end{thm}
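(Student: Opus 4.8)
**The plan is to verify the three axioms of Definition \ref{defn-hwc} for $\catc_\Lmb$, drawing on the structural results about KP modules from \cite{W}.**

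First I would recall the basic setup: the category of weight $\borel$-modules is abelian with simple objects exactly the one-dimensional modules $K_\mu$ ($\mu \in \ZZ^n$), and $\catc_\Lmb$ is the Serre subcategory of those whose weights lie in the order ideal $\Lmb$. Since $\Lmb$ is finite, every object of $\catc_\Lmb$ has finite length, and one needs to note that $\catc_\Lmb$ has enough projectives and injectives — this should follow by a standard argument (truncating a projective/injective cover in the ambient category, or an explicit construction), which I would invoke from \cite{W} or spell out briefly. The candidate standard objects are the generalized KP modules $\smod_\lmb$ ($\lmb \in \Lmb$); note each $\smod_\lmb$ does lie in $\catc_\Lmb$ because its weights are $\leq \lmb$ in the relevant ordering — this is exactly the content of the results in \cite{W} that identify the weights of $\smod_\lmb$ and show $\Lmb$ being a $\prec$-order ideal forces all those weights into $\Lmb$.

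Next, for axiom (1), $\Hom_\catc(\smod_\lmb, \smod_\mu) = 0$ unless $\lmb \prec \mu$: a nonzero homomorphism sends the generator $u_\lmb$ (a weight-$\lmb$ vector) to a weight-$\lmb$ vector of $\smod_\mu$, and it must be nonzero and killed by $\nplus$ since $u_\lmb$ generates and $\smod_\lmb = \ualg(\nplus)u_\lmb$; the existence of such a highest-weight-type vector of weight $\lmb$ in $\smod_\mu$ forces, by the weight analysis in \cite{W}, the inequality $\lmb \prec \mu$ (or $\lmb = \mu$). Axiom (2), $\End_\catc(\smod_\lmb) \cong K$: again an endomorphism is determined by the image of $u_\lmb$, which must be a $\nplus$-invariant weight-$\lmb$ vector, and the $\lmb$-weight space's $\nplus$-invariants in $\smod_\lmb$ is one-dimensional (spanned by $u_\lmb$ itself) — this follows from $\ch(\smod_\lmb) = \schub_\lmb$ having a single monomial $x^\lmb$ as its $\lmb$-graded piece, plus the fact that $u_\lmb$ is the unique weight vector of that weight.

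The main work, and the expected obstacle, is axiom (3): producing a surjection $P(\lmb) \surj \smod_\lmb$ from the projective cover with kernel filtered by $\smod_\nu$ with $\nu \succ \lmb$. Here I would lean heavily on the results of \cite{W}: the key input is that modules with KP filtrations are precisely those whose higher Ext against the dual KP (costandard) objects vanish, together with the explicit identification of $\smod_\lmb$ as a universal object receiving maps from KP-filtered modules with top $K_\lmb$. Concretely, I expect the argument to run: the projective cover $P(\lmb)$ in $\catc_\Lmb$ has a filtration by various $\smod_\nu$ (KP-filtered — this is the deep fact from \cite{W}, essentially that projectives in this category are KP-filtered), the top such quotient is $\smod_\lmb$ giving the surjection, and a weight/ordering argument shows every other $\smod_\nu$ occurring has $\nu \succ \lmb$ — because a $\std(\nu)$ appearing "above" $\std(\lmb)$ in a filtration of something with simple top $L(\lmb)$ must satisfy $\nu \succ \lmb$ by axiom (1)-type reasoning and the structure of the poset. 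Finally, the last sentence, that $\catc_n = \catc_{\Lmb_n}$ is a highest weight category, is immediate once one knows $\Lmb_n$ is a finite $\prec$-order ideal, which was asserted just before the theorem statement via \cite[Lemmas 6.2, 6.3]{W}.
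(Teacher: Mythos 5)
Your plan — verify the three axioms using weight estimates from \cite{W} — matches the paper's in outline, and your treatment of axioms (1) and (2) lands on the right conclusions, but there are two concrete problems.

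First, a local error in your argument for (1) and (2): you assert that the image of $u_\lmb$ under a nonzero morphism $\smod_\lmb \to \smod_\mu$ ``must be nonzero and killed by $\nplus$,'' calling it a highest-weight-type vector. The generator $u_\lmb$ is \emph{not} annihilated by $\nplus$ (indeed $\smod_\lmb = \ualg(\nplus)u_\lmb$, so annihilation would make $\smod_\lmb$ one-dimensional), and the same goes for its image. The $\nplus$-singularity claim is both false and unnecessary; what is actually used in the paper is simply that a nonzero morphism forces $(\smod_\mu)_\lmb \neq 0$, and then \cite[Proposition 6.4]{W} gives $\lmb \preceq \mu$. Likewise for (2), the relevant fact is just $(\smod_\lmb)_\lmb = Ku_\lmb$; no $\nplus$-invariance is involved.

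Second, and more seriously, your treatment of axiom (3) has a genuine gap. You appeal to the Ext-vanishing characterization of KP-filtered modules as ``the key input,'' but that result is not what is used here (and invoking it risks circularity, since it is precisely the kind of statement the highest-weight structure is supposed to deliver). You then assert that a weight/ordering argument plus ``axiom (1)-type reasoning'' forces every $\smod_\nu$ in the kernel to have $\nu \succ \lmb$; this does not follow from axiom (1) and the head of $P(\lmb)$ alone. The paper's actual proof is constructive: fix a linear extension $\lmb^1, \ldots, \lmb^l$ of $(\Lmb, \prec)$, let $P_\lmb = \ualg(\borel)/\langle h - \langle\lmb, h\rangle\rangle$ be the projective cover in the ambient weight-module category, and take the successive truncations $P^i$ (the largest quotient with weights in $\{\lmb^1,\ldots,\lmb^i\}$). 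Then \cite[Proposition 6.4]{W} identifies $P^k \cong \smod_\lmb$ (where $\lmb = \lmb^k$), and the argument of \cite[Lemma 7.1]{W} shows each $\Ker(P^i \surj P^{i-1})$ is a direct sum of copies of $\smod_{\lmb^i}$. This explicit chain of truncations is the content of axiom (3); your sketch names the right sources but does not supply the construction, and the step where you claim $\nu \succ \lmb$ ``by axiom (1)-type reasoning'' would need to be replaced by this analysis of the $P^i$.
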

\begin{proof}
In \cite[Proposition 6.4]{W} we showed that if $\lmb, \mu \in \ZZ^n$ and $(\smod_\mu)_\lmb \neq 0$ then $\lmb \preceq \mu$
(more precisely, $\lmb \leq \mu$ follows from (1) in the proof of \cite[Proposition 6.4]{W}, and $\lmb \leq' \mu$ follows from (2) in the same proof and \cite[Lemma 6.2]{W}). 
If $\Hom(\smod_\lmb, \smod_\mu) \neq 0$, then $(\smod_\mu)_\lmb \neq 0$ must hold since $\smod_\lmb$ is generated by an element of weight $\lmb$, and thus $\lmb \preceq \mu$. This verifies (1) in the definition of highest weight category. (2) also follows since $(\smod_\lmb)_\lmb = Ku_\lmb$. 

Let us verify (3). 
Let the elements of $\Lmb$ be indexed as $\lmb^1, \ldots, \lmb^l$ so that $\lmb^i \prec \lmb^j$ implies $i<j$.  Then $\{\lmb^1, \ldots, \lmb^i\}$ is an order ideal. 
Let $\lmb = \lmb^k \in \Lmb$. 
Let $P_\lmb$ be the projective cover of $K_\lmb$ in the category of all weight $\borel$-modules, so $P_\lmb \cong \ualg(\borel)/\langle h-\langle \lmb, h \rangle \rangle_{h \in \csa}$. 

Let $P^i$ denote the largest quotient of $P_\lmb$ such that all of its weights are in $\{ \lmb^1, \ldots, \lmb^i \}$. 
Note that $P^i$ is the projective cover of $K_\lmb$ in $\catc_{\{\lmb^1, \ldots, \lmb^i\}}$ for $i \geq k$. 
In particular, $P^l$ is the projective cover of $K_\lmb$ in $\catc_\Lmb$, and $P^k \cong \smod_\lmb$ as we showed in \cite[Proposition 6.4]{W}. 
The same argument as in the proof of \cite[Lemma 7.1]{W} shows that
the kernel $\Ker(P^i \surj P^{i-1})$ of the natural surjection is isomorphic to a direct sum of some copies of $\smod_{\lmb^i}$. 
Thus 
\[0 \subset \Ker(P^l \surj P^{l-1}) \subset \Ker(P^l \surj P^{l-2}) \subset \cdots \subset \Ker(P^l \surj P^k) \subset P^l
\] is a KP filtration, and the last successive quotient is isomorphic to $P^k \cong \smod_\lmb$. This shows (3). 
\end{proof}

From \cite[Proposition 6.4]{W} (and \cite[Lemma 6.2]{W}) we see that the costandard modules in $\catc_\Lmb$ are $\{\smod_{\rho-\lmb}^* \otimes K_\rho : \lmb \in \Lmb\}$. 

\begin{rmk}
It is easy to see that the projective cover of $K_\lmb$ in $\catc_\Lmb$ ($\lmb \in \Lmb \subset \ZZ^n$) is given by the largest quotient $(P_\lmb)^{\Lmb}$ of $P_\lmb$ each of whose simple constituents are $L(\mu)$ ($\mu \in \Lmb$). 
Thus from the theorem above we see that $\smod_\lmb \cong (P_\lmb)^{\Lmb}$ for any order ideal $\Lmb \subset (\ZZ^n, \prec)$ containing $\lmb$ as one of its maximal elements. 

Let $\Lmb \subset (\ZZ^n, \prec)$ be an order ideal and $\lmb \in \Lmb$ be one of its maximal element as above. If a weight $\borel$-module $M$ is generated by an element of weight $\lmb$ then $M$ is a quotient of $P_\lmb$. So if in addition $M \in \catc_{\Lmb}$ then it follows that $M$ is in fact a quotient of $\smod_\lmb$. 
\label{rmk-quot-s}
\end{rmk}

\section{Ringel dual of $\catc_n$}
\label{sec:dual-cn}
In this section we show the following: 

\begin{thm}
\label{thm:dual-cn}
The Ringel dual of the highest weight category $\catc_n$ is equivalent to $\catc_n$ itself. 
The functor $F$ in Proposition \ref{prop:generaldual} acts on the standard modules by $F(\smod_w) = \smod_{w_0ww_0}$. 
\end{thm}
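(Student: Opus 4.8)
The plan is to construct an explicit full tilting object for $\catc_n$ and identify its endomorphism algebra with (the appropriate module-category incarnation of) $\catc_n$ again, then track where the standard modules go. The key observation that makes this feasible is the description of costandard modules already recorded after Theorem~\ref{kp-hwc}: the costandard object attached to $\lmb \in \Lmb_n$ is $\smod_{\rho-\lmb}^* \otimes K_\rho$. Combined with Proposition~\ref{prop:hn-invol} (the involution $\iota$ on $H_n$ sending $\schub_w$ to $\schub_{w_0 w w_0}$) and the character identity $\ch(\smod_w)=\schub_w$, this strongly suggests that KP modules are simultaneously standard and (a twist/dual of) costandard up to relabelling by $w \mapsto w_0 w w_0$, which is exactly the behaviour one needs for self Ringel-duality.

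First I would pin down, for each $\lmb \in \Lmb_n$, the relation between $\smod_\lmb$ and the costandard object $\costd(\mu)$ for a suitable $\mu$. Using $\ch(\smod_{\rho-\lmb}^*\otimes K_\rho)(x) = \schub_{\rho-\lmb}(x^{-1})x^\rho$ and the identity $\iota(\schub_w)=\schub_{w_0 w w_0}$ in $H_n$ — together with Proposition~\ref{prop:schub-vanish} to control what happens in $H_n$ versus in the polynomial ring, and the fact that $\{x^\lmb : \lmb \in \Lmb_n\}$ maps isomorphically onto $H_n$ — I expect to show that $\costd(\lmb)$ and $\smod_{\lmb'}$ have the same character for $\lmb'$ obtained from $\lmb$ via the $w \mapsto w_0 w w_0$ relabelling (equivalently via $\lmb \mapsto$ the code of $w_0 w w_0$). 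Because characters determine objects of $\catc_n$ only up to composition factors, not up to isomorphism, I would then upgrade this: a KP module $\smod_{\lmb'}$, having a standard filtration of length one and the right character, and satisfying $\Ext^1(\std(\nu),\smod_{\lmb'})=0$ for all $\nu$ (this vanishing is the content one must verify, presumably via the self-duality built into the KP module structure or via Proposition~\ref{prop:hw-homext} applied after identifying $\smod_{\lmb'}$ with a costandard twist), is forced to be the indecomposable tilting $T(\lmb)$. Hence $T := \bigoplus_{\lmb \in \Lmb_n} \smod_{w_0(\cdot)w_0\text{-shift of }\lmb}$ is a full tilting object.

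Next I would compute $\catc_n^\vee$. By Proposition~\ref{prop:generaldual}, $\catc_n^\vee$ is the category of finite-dimensional modules over $\End_{\catc_n}(T)$, with standard objects $\std^\vee(\lmb)=\Hom_{\catc_n}(\smod_\lmb, T)$ and weight poset $\Lmb_n^{\mathrm{op}}$. The crucial point is that the ordering $\prec$ on $\Lmb_n$ is reversed, but the relabelling $w \mapsto w_0 w w_0$ also reverses $\prec$ — this follows from the definition of $\prec$ in terms of $w^{-1}$ under $\lex>$ and $\rlex>$ together with the symmetry $v \mapsto w_0 v w_0$, and from the remark \cite[Lemma 6.2]{W} relating $<$ and $<'$ via $\lmb \mapsto \rho-\lmb$. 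So $(\Lmb_n^{\mathrm{op}}, \prec)$ is isomorphic as a poset to $(\Lmb_n, \prec)$ via this relabelling. I would then check that under this poset isomorphism the standard objects $\Hom_{\catc_n}(\smod_\lmb, T)$ of $\catc_n^\vee$ match the KP modules, using Proposition~\ref{prop:hw-homext} to identify $\Hom_{\catc_n}(\std(\lmb), T(\mu))$ with multiplicity spaces reading off costandard filtrations of the tilting summands — this is where the bookkeeping is heaviest. A highest weight category is determined up to equivalence by its poset together with the structure of its standardly-filtered subcategory (projective covers, $\Ext$s among standards), so matching standard objects and the poset isomorphism yields an equivalence $\catc_n^\vee \simeq \catc_n$, and the functor $F=\Hom_{\catc_n}(-,T)$ then sends $\smod_w \mapsto \smod_{w_0 w w_0}$ on standards by construction.

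The main obstacle I anticipate is the upgrade from character equality to the actual identification of $\smod_{w_0 w w_0}$ with the indecomposable tilting $T(\lmb)$ — i.e., proving that KP modules are tilting objects in $\catc_n$ after the $w_0(\cdot)w_0$ twist, which amounts to showing $\Ext^1_{\catc_n}(\smod_\nu, \smod_{w_0 w w_0}) = 0$ for all relevant $\nu$. One clean route: show directly that the contravariant functor "linear dual, then twist by $K_\rho$" sends $\smod_\lmb$ (standard) to $\smod_{\rho-\lmb}^*\otimes K_\rho$ (costandard) and that composing with the combinatorial symmetry $\iota$ at the level of characters is realized by an actual functor on $\catc_n$ swapping standards and costandards under the relabelling; self Ringel-duality is then almost formal. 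Verifying that $\Lmb_n$ is stable under $w \mapsto w_0 w w_0$ and that this map reverses $\prec$ is routine given \cite[Lemma 6.2, Lemma 6.3]{W}, so I would dispatch that quickly and concentrate the effort on the tilting identification.
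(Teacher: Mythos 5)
Your proposal rests on the identification of the indecomposable tilting $T(\lmb)$ with a twisted KP module $\smod_{w_0 w w_0}$ (where $\inv(w)=\lmb$), and this identification is simply false. KP modules are the \emph{standard} objects of $\catc_n$, and a standard object $\std(\lmb)$ is a tilting only when $\lmb$ is minimal in the weight poset; in general $\Ext^1_{\catc_n}(\std(\nu),\std(\lmb))$ is nonzero for some $\nu\prec\lmb$, so the vanishing $\Ext^1(\std(\nu),\smod_{\lmb'})=0$ that you flag as ``the content one must verify'' does not hold. A concrete counterexample with $n=3$: for $\lmb=(0,1,0)=\inv(s_2)$ one has $\overline{\lmb}=\inv(w_0 s_2 w_0)=\inv(s_1)=(1,0,0)$, and $\smod_{(1,0,0)}=K u_1$ is one-dimensional, whereas the indecomposable tilting $T((0,1,0))$ is the two-dimensional $K^2=K u_1\oplus K u_2$. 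The auxiliary character claim also fails: $\ch\,\costd((0,1,0))=x_2$ while $\ch\,\smod_{\overline{(0,1,0)}}=x_1$. The final paragraph moreover conflates Ringel duality with the contravariant duality $M\mapsto M^*\otimes K_\rho$: the latter swaps standards with \emph{costandards}, whereas Ringel duality sends standards to standards of the dual category; these are genuinely different functors and the one does not ``almost formally'' yield the other.

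The paper's proof proceeds quite differently. The full tilting is $T=\bigwedge^\bullet(K^{n-1}\oplus\cdots\oplus K^1)$, of dimension $2^{\binom{n}{2}}$ and much larger than any single KP module; each indecomposable summand $T(\lmb)=\bigotimes_{j}\bigwedge^{\overline{\lmb}_j}K^{n-j}$ is a tensor product of exterior powers, not a KP module. One then defines a second action of a copy $\borel'$ of $\borel$ on $T$ commuting with the first (transposing the roles of rows and columns in the index set of the generators $u_{ij}$), computes $\Hom_\catc(\smod_w,T)\cong\smod_{w_0ww_0}$ using the Cauchy identity to count multiplicities in a costandard filtration of $T$, and finally shows that the algebra map $\ualg(\borel)/I\to\End_\catc(T)$ induced by the $\borel'$-action is an isomorphism, by a filtration-by-two-sided-ideals argument matching a corresponding filtration of $\End(T)$ weight by weight. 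Your observation that $w\mapsto w_0ww_0$ reverses $\prec$ on $\Lmb_n$ is correct and is indeed used, but it is only the combinatorial shell of the argument; the analytic core is the construction of the commuting $\borel'$-action and the explicit computation of $\End(T)$, neither of which appears in your sketch. To salvage the proposal you would need to abandon the claim that twisted KP modules are tiltings and instead identify the tiltings correctly, which effectively means redoing the proof along the paper's lines.
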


From this theorem in particular we obtain the following symmetry relation for the Hom and Ext groups between KP modules: 
\begin{cor}
\label{cor:ext}
$\Ext^i_{\catc_n}(\smod_w, \smod_v) \cong \Ext^i_{\catc_n}(\smod_{w_0vw_0}, \smod_{w_0ww_0})$ for any $w, v \in S_n$ and any $i \geq 0$. 
\hfill $\Box$
\end{cor}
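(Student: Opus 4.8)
The plan is to obtain the corollary as a formal consequence of Theorem~\ref{thm:dual-cn} and the general Ringel-duality statement in Proposition~\ref{prop:generaldual}. First I would note that for $w \in S_n$ the KP module $\smod_w = \smod_{\inv(w)}$ is, by Theorem~\ref{kp-hwc}, exactly the standard object $\std(\lmb)$ of the highest weight category $\catc_n = \catc_{\Lmb_n}$ with $\lmb = \inv(w) \in \Lmb_n$; in particular $\smod_w, \smod_v \in \catc_n^\std$, so the Ext-comparison part of Proposition~\ref{prop:generaldual} is applicable to the pair $(\smod_w, \smod_v)$.

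Next, fix a full tilting $T$ of $\catc_n$ and let $F = \Hom_{\catc_n}(-,T) \colon \catc_n \mor \catc_n^\vee$ be the associated contravariant functor to the Ringel dual. By Proposition~\ref{prop:generaldual}, $F$ restricts to a contravariant equivalence $\catc_n^\std \mor (\catc_n^\vee)^\std$ and yields isomorphisms
\[
\Ext^i_{\catc_n}(\smod_w, \smod_v) \;\cong\; \Ext^i_{\catc_n^\vee}(F\smod_v,\, F\smod_w)
\]
for all $i \geq 0$. Now I would invoke Theorem~\ref{thm:dual-cn}: it furnishes an equivalence of categories $\catc_n^\vee \simeq \catc_n$ under which $F$ becomes a contravariant functor $\catc_n \mor \catc_n$ sending each standard module $\smod_u$ to $\smod_{w_0 u w_0}$. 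Since an equivalence of abelian categories preserves Ext groups, transporting the displayed isomorphism along $\catc_n^\vee \simeq \catc_n$ replaces $F\smod_v$ by $\smod_{w_0 v w_0}$ and $F\smod_w$ by $\smod_{w_0 w w_0}$, giving
\[
\Ext^i_{\catc_n}(\smod_w, \smod_v) \;\cong\; \Ext^i_{\catc_n}(\smod_{w_0 v w_0},\, \smod_{w_0 w w_0})
\]
for all $i \geq 0$, which is the assertion (the $i = 0$ case also yields the corresponding identity for $\Hom$).

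I do not expect any genuine obstacle: once Theorem~\ref{thm:dual-cn} is in hand, the corollary is pure diagram-chasing. The only point deserving a word of care is the bookkeeping of variance — because $F$ is contravariant, the arguments $w$ and $v$ are interchanged, so one must check that applying the involution $u \mapsto w_0 u w_0$ to both arguments \emph{after} swapping them reproduces exactly the stated isomorphism; this is immediate since $u \mapsto w_0 u w_0$ is an involution of $S_n$.
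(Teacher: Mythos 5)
Your argument is correct and is exactly the intended one: the paper states the corollary with no further proof precisely because it follows immediately from Theorem \ref{thm:dual-cn} together with the Ext-comparison in Proposition \ref{prop:generaldual}, applied to the standard objects $\smod_w, \smod_v \in \catc_n^\std$ and transported along the equivalence $\catc_n^\vee \simeq \catc_n$. Your variance bookkeeping (swapping the arguments under the contravariant $F$ and then applying $u \mapsto w_0 u w_0$) matches the statement as written.
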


\begin{rmk}
Since $\Lmb_n$ is an order ideal in $(\ZZ^n, \prec)$, by the same argument as in \cite[Lemma 7.1]{W} it holds that $\Ext^i_{\catc_n}(M, N) \cong \Ext^i(M, N)$ (Ext group in the category of all weight $\borel$-modules) for any $M, N \in \catc_n$. Hence Corollary \ref{cor:ext} in fact shows $\Ext^i(\smod_w, \smod_v) \cong \Ext^i(\smod_{w_0vw_0}, \smod_{w_0ww_0})$. 
\end{rmk}

\begin{rmk}
By Theorem \ref{thm:dual-cn}, 
we have a functor defined as the composition of
$\catc_n \overset{F}{\mor} \catc_n \inj \catc_{n+1} \overset{F}{\mor} \catc_{n+1}$. 
By the theorem we see that this functor acts on the standard modules by $\smod_w \mapsto \smod_{1 \times w}$, where $1 \times w \in S_{n+1}$ is defined by $(1 \times w)(1)=1$, $(1 \times w)(i+1)=w(i)+1$ ($1
\leq i \leq n$). 
By the corollary and the remark above we see that the Ext groups between KP modules are stable under shifting, i.e. $\Ext^i(\schub_w, \schub_v) \cong \Ext^i(\schub_{1 \times w}, \schub_{1 \times v})$. 
\end{rmk}

Let us move to the proof of Theorem \ref{thm:dual-cn}.
First we prepare some definitions and results. 
For $\lmb=\inv(w) \in \Lmb_n$ define $\overline{\lmb}=\inv(w_0ww_0)$. 
Note that by definition, for $\lmb, \mu \in \Lmb_n$, $\lmb \leq \mu$ iff $\overline{\lmb} \geq' \overline{\mu}$. 
For each $\lambda \in \Lmb_n$, define $T(\lmb)=\bigotimes_{1 \leq j \leq n-1} \bigwedge^{\overline{\lmb}_j} K^{n-j}$. 

As we showed in the proof of \cite[Lemma 4.2]{W2}, $T(\lmb)$ has a filtration whose subquotients are of the form $\smod_\mu$ ($\mu \in \Lmb_n, \mu \preceq \lmb$). 
Since $\overline{\rho-\lmb} = \rho-\overline{\lmb}$ we have $T(\lmb) \cong T(\rho-\lmb)^* \otimes K_\rho$, and thus $T(\lmb)$ also has a filtration whose subquotients are of the form $\smod_{\rho-\nu}^* \otimes K_\rho$ ($\nu \in \Lmb_n, \nu \preceq \lmb$). 
Thus by Proposition \ref{prop:hw-homext} we see that $\Ext^1(\smod_\mu, T(\lmb))=0$ for all $\mu \in \Lmb_n$. 
Thus $T(\lmb)$ is a tilting in $\catc_n$. 

Since the weights of $\smod_\mu$ are all $\preceq \mu$ (\cite[Proposition 6.4]{W}), the weights of $T(\lmb)$ are all $\preceq \lmb$ and the weight space $T(\lmb)_{\lmb}$ is one-dimensional. 
By these properties we see that $T(\lmb)$ contains the indecomposable tilting module corresponding to $\lmb$ (in fact, we will see that $T(\lmb)$ is an indecomposable tilting). So if we define $T=\bigoplus_{\lmb \in \Lmb_n} T(\lmb) \cong \bigwedge^\bullet(K^{n-1} \oplus K^{n-2} \oplus \cdots \oplus K^1)$, then $T$ is a full tilting. 

By Proposition \ref{prop:hw-quot}, if $M$ has a standard filtration, 
then $\Ker(M^{\leq' \lmb} \surj M^{<' \lmb})$ is isomorphic to a direct sum of copies of $\smod_\lmb$, where $M^{\leq' \lmb}$ and $M^{<' \lmb}$ are the largest quotients of $M$ whose weights are all $\leq' \lmb$ and $<' \lmb$ respectively. 
In this case we see, by the same argument as in the proof of \cite[Theorem 8.1]{W} (using the ordering $\leq'$ instead of $\leq$),  
that the isomorphism can be written as $\smod_\lmb \otimes (M^{\leq' \lmb})_\lmb \ni xu_\lmb \otimes v \mapsto xv \in \Ker(M^{\leq' \lmb} \surj M^{<' \lmb})$, where on the left-hand side $\borel$ acts only on $\smod_\lmb$. 

\begin{proof}[Proof of Theorem \ref{thm:dual-cn}]
Let $\catc=\catc_n$. 
Throughout this proof and thereafter
we write $\Hom$, $\End$ and $\Ext^i$ for $\Hom_\catc$, $\End_\catc$ and $\Ext^i_\catc$ respectively. 

Let $\borel' = \bigoplus_{i \leq j} K e_{ij}'$ be a copy of $\borel$. 
We define an action of $\borel'$ on $T$ which commutes with the action of $\borel$ as follows. 
Take a basis $\{u_{ij} : i,j \geq 1, i+j \leq n\}$ of $K^{n-1} \oplus \cdots \oplus K^1$
so that the action of $\borel$ is given by $e_{pq}u_{ij}=\delta_{qi}u_{pj}$. 
Then we define the action of $\borel'$ on $K^{n-1} \oplus \cdots \oplus K^1$ by $e_{pq}'u_{ij} = \delta_{qj}u_{ip}$, and define the action on $T$ as the one induced from this action. 
In other words, if $-': T \mor T$ is the involution given by $u_{ij}' = u_{ji}$, 
then $e_{ij}' =  -' \circ e_{ij} \circ -'$. 
Since the actions of $\borel$ and $\borel'$ commute, we have an algebra homomorphism $\ualg(\borel) \cong \ualg(\borel') \mor \End_\borel(T)$, 
and thus an $\End_\borel(T)$-module can be naturally seen as a $\ualg(\borel)$-module (beware that, as we have remarked before, we will simply write $\End(T)$ to mean $\End_\borel(T) = \End_\catc(T)$ hereafter). 
If $M$ is an $\End(T)$-module, 
then its weight-space decomposition as a $\ualg(\borel)$-module is given by
$M = \bigoplus_{\lmb \in \Lmb_n} M_\lmb = \bigoplus_{\lmb \in \Lmb_n} \pi_\lmb M$,
where $\pi_\lmb \in \End(T)$ is the projection $T=\bigoplus_{\mu \in \Lmb_n} T(\mu) \surj T(\overline{\lmb})$;
 in particular the weights of $M$ are all in $\Lmb_n$. So we have a functor $\catc^{\vee} = \text{$\End(T)$-mod} \mor \catc$. We want to show that this functor is an equivalence and the composition $\catc \overset{F = \Hom(-,T)}{\longrightarrow} \catc^\vee \overset{\sim}{\mor} \catc$ sends $\smod_w$ to $\smod_{w_0ww_0}$. 

First we show the second claim. 
By definition, $\smod_w$ is isomorphic to the $\borel$-submodule of $T$
generated by $\bigwedge_{i<j, w(i)>w(j)} u_{i,n+1-j}$; hereafter we identify $u_w$ with this element. 
Note that $u_w' = \pm u_{w_0ww_0}$.
We have an injective homomorphism
$\smod_{w_0ww_0} \mor \Hom(\smod_w, T)$ given by $xu_{w_0ww_0} \mapsto (v \mapsto x'v)$ ($x \in \ualg(\borel)$): 
it is well-defined since $x u_{w_0ww_0}=0$ implies $x'yu_w = y(xu_{w_0ww_0})'=0$ for any $y \in \ualg(\borel)$, 
and it is injective because $v \mapsto x'v$ maps $u_w = \pm u_{w_0ww_0}'$ to $\pm(xu_{w_0ww_0})'$. 
Since $T$ has a costandard filtration, by Proposition \ref{prop:hw-homext} the dimension of $\Hom(\smod_w, T)$ is equal to the number of times the costandard module $\smod_{w_0w}^* \otimes K_\rho$ appears in (any) costandard filtration of $T$. 
Since $T \cong T^* \otimes K_\rho$, this number is equal to the number of times $\smod_{w_0w}$ appears in (any) standard filtration of $T$. 
From Cauchy identity we see that $\ch(T)=\prod (x_i+1)^{n-i} = \sum_{v \in S_n} \schub_v(x) \schub_{vw_0}(1)$, and thus we see that $\dim \Hom(\smod_w, T) = \schub_{w_0ww_0}(1) = \dim \smod_{w_0ww_0}$. So the injection above is in fact an isomorphism and this shows the second claim. 

Now let us show that the functor $\catc^\vee \mor \catc$ given above is an equivalence. 
First we note the following thing. 
Define an algebra $A=\ualg(\borel)/I$, where $I$ is the two-sided ideal generated by
all elements in $\ualg(\csa) = S(\csa) \cong K[\csa^*]$ which vanish on $\Lmb_n$ (here $\Lmb_n \subset \ZZ^n$ is identified with a subset of $\csa^*$ via the pairing $\langle \lmb, h \rangle = \sum_i \lmb_i h_i$ introduced before). 
Then the objects in $\catc$, i.e. weight $\borel$-modules with weights in $\Lmb_n$, are just the finite dimensional $A$-modules (note that $A$-modules automatically have weight decompositions since any element $p_\lmb \in K[\csa^*]$ such that $p_\lmb(\mu)=\delta_{\lmb\mu}$ ($\forall \mu \in \Lmb_n$) acts as a projection onto the $\lmb$-weight space). 
Thus it suffices to show that the map
\[
\phi : A \ni a \mapsto (\text{$\borel'$-action of $a$ on $T$}) \in \End(T)
\]
is an isomorphism. 
We note here that $A$ has an algebra anti-automorphism $\iota$
defined by $\iota(h)=\langle \rho, h \rangle-h$ ($h \in \csa$) 
and $\iota(e_{ij}) = -e_{ij}$ ($1 \leq i < j \leq n$). 
For each $\lmb \in \Lmb_n$ take $p_\lmb \in A$ as above. 
Note that $\iota(p_\lmb)=p_{\rho-\lmb}$.

Let $0 \leq d \leq \binom{n}{2}$. 
It suffices to show that $\phi$ induces an isomorphism between $A_d := \sum_{\lmb_1+\cdots+\lmb_n=d} A p_\lmb A$ and $\End(T)_d := \End(\bigwedge^d (K^{n-1} \oplus \cdots \oplus K^1))$, 
since as algebras $A = \bigoplus_d A_d$ (this follows easily from $h p_\lmb=p_\lmb h$ and $e_{ij} p_\lmb=p_{\lmb-\alpha_{ij}}e_{ij}$) and $\End(T)=\bigoplus_d \End(T)_d$. 
So let us fix such $d$ hereafter in this proof. 
Let the elements of $\{\lmb \in \Lmb_n : \sum \lmb_i = d\}$ be $\lmb^{(1)} > \lmb^{(2)} > \cdots > \lmb^{(r)}$. Note $\overline{\lmb^{(1)}} <' \overline{\lmb^{(2)}} <' \cdots <' \overline{\lmb^{(r)}}$. 
Define $I_k = \sum_{\mu \geq \lmb^{(k)}} A p_\mu A$. 
Also define $J_k = \Hom(T^{\leq' \overline{\lmb^{(k)}}}, T) \subset \End(T)$ where $T^{\leq' \overline{\lmb^{(k)}}}$ is the largest quotient of $T$ whose weights are all $\leq' \overline{\lmb^{(k)}}$. 
In other words, $J_k$ consists of all morphisms in $\End(T)$ which vanishes on the weight spaces $T_\mu$ ($\mu \not\leq' \overline{\lmb^{(k)}}$).
Define $I_0=0$ and $J_0=0$. 
Note that $I_0 \subset \cdots \subset I_r=A_d$ and $J_0 \subset \cdots \subset J_r=\End(T)_d$. 
It suffices to show that $\phi(I_k) \subset J_k$ and that $\phi$ induces an isomorphism $I_k/I_{k-1} \mor J_k/J_{k-1}$ for all $1 \leq k \leq r$. 

Fix $1 \leq k \leq r$. Let $\lmb=\lmb^{(k)}$. 
The first claim $\phi(I_k) \subset J_k$ follows since for $\mu \geq \lmb$, $p_\mu$ acts on $T$ as the projection onto $T(\overline{\mu})$, and every weight $\nu$ of $T(\overline{\mu})$ satisfies $\nu \leq' \overline{\mu} \leq' \overline{\lmb}$. 
Let us now show that the induced map $I_k/I_{k-1} \mor J_k/J_{k-1}$ is an isomorphism. We show that $I_k/I_{k-1}$ and $J_k/J_{k-1}$ are both isomorphic to $\smod_{\lmb} \otimes \smod_{\rho-\lmb}$ as vector spaces and that the composition of isomorphisms $I_k/I_{k-1} \cong \smod_{\lmb} \otimes \smod_{\rho-\lmb} \cong J_k/J_{k-1}$ coincides with the map induced from $\phi$. 

We first show that $I_k / I_{k-1} \cong \smod_\lmb \otimes \smod_{\rho-\lmb}$. 
First note that $A$ is a projective object in $\catc = \text{$A$-mod}$. Since projective objects in $\catc$ have standard filtrations, $A$ has a standard filtration. 

By definition, $A/I_k \cong A^{< \lmb}$ and $A/I_{k-1} \cong A^{\leq \lmb}$, 
and thus $I_k/I_{k-1} \cong \Ker(A^{\leq \lmb} \mor A^{< \lmb})$. 
By Proposition \ref{prop:hw-quot} this is a direct sum of $m$ copies of $\smod_\lmb$, 
where $m$ is the number of times $\smod_\lmb$ appears in a standard filtration of $A$. 
This number $m$ can be calculated, by Proposition \ref{prop:hw-homext}, as $\dim \Hom(A, \smod_{\rho-\lmb}^* \otimes K_\rho) = \dim (\smod_{\rho-\lmb}^* \otimes K_\rho) = \dim \smod_{\rho-\lmb}$. 
Thus $\smod_\lmb \otimes \smod_{\rho-\lmb}$ and $I_k/I_{k-1}$ have the same dimensions. 
We claim that the map $\smod_\lmb \otimes \smod_{\rho-\lmb} \ni xu_\lmb \otimes yu_{\rho-\lmb} \mapsto x p_\lmb \iota(y) = \iota(y \iota(x p_\lmb)) \in I_k/I_{k-1}$ is well-defined. 
To see this, first observe that the weights of $I_k / I_{k-1}$ (resp.\ $\iota(I_k / I_{k-1})$ are all $\leq \lmb$ (resp.\ $\leq' \rho-\lmb$).
Thus the submodule of $I_k / I_{k-1}$ (resp.\ $\iota(I_k / I_{k-1})$) generated by $p_\lmb \iota(y)$ (resp.\ $\iota(x p_\lmb)$) is a quotient of $\smod_\lmb$ (resp.\ $\smod_{\rho-\lmb}$) by Remark \ref{rmk-quot-s}, and thus $xu_\lmb = 0$ (resp.\ $yu_{\rho-\lmb}=0$) implies $x p_\lmb \iota(y)=0$ (resp.\ $y \iota(x p_\lmb)=0$). 
This verifies the well-definedness of the map above. 
It is clear that the map above is a surjection. By the equality of dimensions this is in fact an isomorphism. 


Next we show $J_k / J_{k-1} \cong \smod_\lmb \otimes \smod_{\rho-\lmb}$. Since $T^{<' \overline{\lmb}}$ has a standard filtration by Proposition \ref{prop:hw-quot}, $\Ext^1(T^{<' \overline{\lmb}},T)$ vanishes. So $J_k/J_{k-1} \cong \Hom(\Ker(T^{\leq' \overline{\lmb}} \surj T^{<' \overline{\lmb}}),T)$ via the restriction map. 
The right-hand side is isomorphic to $\Hom(\smod_{\overline \lmb} \otimes (T^{\leq' \overline{\lmb}})_{\overline{\lmb}},T) \cong ((T^{\leq' \overline{\lmb}})_{\overline{\lmb}})^* \otimes \Hom(\smod_{\overline \lmb}, T)$ by the remark before the proof. 
As we have seen above, $\Hom(\smod_{\overline \lmb}, T) \cong \smod_\lmb$. 
On the other hand, since $T \cong T^* \otimes K_\rho$, $((T^{\leq' \overline{\lmb}})_{\overline{\lmb}})^* \cong (T_{\leq \mu})_\mu$ where $\mu=\rho-\overline{\lmb}$ and $T_{\leq \mu}$ denotes the largest submodule of $T$ whose weights are $\leq \mu$. 
Since $\smod_\mu \cong (P_\mu)^{\leq \mu}$ we have $(T_{\leq \mu})_\mu \cong \Hom(P_\mu, T_{\leq \mu}) \cong \Hom(\smod_\mu, T_{\leq \mu}) \cong \Hom(\smod_\mu, T) \cong \smod_{\overline{\mu}} = \smod_{\rho-\lmb}$. 

Now we show that the composition of these isomorphisms coincides with the map induced from $\phi$, up to a sign depending only on $\lmb$. Chasing the isomorphisms we see that it suffices to show $\phi(x p_\lmb y)(\tau) = \langle \iota(y)' u_{\rho-\overline{\lmb}},\tau \rangle x'u_{\overline{\lmb}}$, up to a sign depending only on $\lmb$, for all $\tau \in T_{\overline \lmb}$ and all $x, y \in A$, 
where $\langle - , - \rangle$ is a natural bilinear form on $T$ defined by $T \otimes T \overset{\text{mult.}}\mor T = \bigwedge^\bullet (K^{n-1} \oplus \cdots \oplus K^1) \surj \bigwedge^{\binom{n}{2}}(K^{n-1} \oplus \cdots \oplus K^1) \cong K$. 
Note that from the definition we see that $\langle u, x'v \rangle = \langle \iota(x)'u, v\rangle$ holds for any $u, v \in T$ and $x \in A$. 
First we have $\phi(x p_\lmb y)(\tau)=x'p_\lmb'y'\tau$. 
Here $p_\lmb' y' \tau \in T(\overline{\lmb})_{\overline{\lmb}}$ so it must be a constant multiple of $u_{\overline \lmb}$. Using the pairing defined above we see that this is equal to $\pm \langle p_\lmb' y' \tau, u_{\rho-\overline \lmb} \rangle u_{\overline \lmb}$ with the sign depending only on $\lmb$, since $u_{\overline{\lmb}} \wedge u_{\rho-\overline{\lmb}} = \pm u_{w_0}$. 
Since $\langle p_\lmb' y' \tau, u_{\rho-\overline \lmb} \rangle = \langle \tau, \iota(y)' p_{\rho-\lmb}' u_{\rho-\overline \lmb} \rangle = \langle \tau, \iota(y)' u_{\rho-\overline \lmb} \rangle$ we are done. 

\end{proof}

\begin{rmk}
By the isomorphism $\End(T) \cong A$ above we have $\End(T(\overline{\lmb})) \cong p_\lmb A p_\lmb$. But it can be seen, using $p_\lmb h =h p_\lmb$ ($h \in \csa$) and $p_\lmb e_{ij}=e_{ij} p_{\lmb-\alpha_{ij}}$, that $p_\lmb A p_\lmb \cong K$. So we see that $T(\lmb)$ is in fact an indecomposable tilting. 
\end{rmk}

\begin{rmk}
The full tilting module $T$ introduced above has a relation with double Schubert functor introduced by Sam (\cite{Sam}). 
Since the actions of $\borel$ and $\borel'$ commute, the direct sum $\borel \oplus \borel'$, which is isomorphic to the even part of the Lie superalgebra $\borel(n|n)$ (notation as in \cite{Sam}), naturally acts on $T$. 
Then it is possible to define an action of the odd part of $\borel(n|n)$ on $T$ so that $T$ is isomorphic to the double Schubert functor image $\mathscr{S}_{w_0}(V^\bullet)$ defined there.  
\end{rmk}

\section{Compatibility of Ringel duality with tensor product}
\label{sec:tensor-dual}
In this section we show that the tensor product operation and the Ringel duality functor $F = \Hom(-, T)$ ($T=\bigwedge(K^{n-1} \oplus \cdots \oplus K^1)$ as above) are in some sense compatible with each other. To be precise, we show the following: 
\begin{thm}
\label{thm:tensor-dual}
Let $M, N \in \catc_n$ have standard filtrations. 
Then $F((M \otimes N)^{\Lmb_n}) \cong (FM \otimes FN)^{\Lmb_n}$, 
where for a weight $\borel$-module $L$, $L^{\Lmb_n} \in \catc_n$ denotes the largest quotient of $L$ which is in $\catc_n$. 
\end{thm}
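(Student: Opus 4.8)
The plan is to construct an explicit natural comparison morphism $(FM\otimes FN)^{\Lmb_n}\mor F((M\otimes N)^{\Lmb_n})$ and prove it is an isomorphism, first reducing to standard modules and then finishing by a dimension count together with a surjectivity statement. Write $W=K^{n-1}\oplus\cdots\oplus K^1$, so that $T\cong\bigwedge^\bullet W$ as a $\borel$-module and both $\borel$ and $\borel'$ act on $T$ by derivations of the exterior algebra; in particular the multiplication $m\colon T\otimes T\mor T$ is a morphism of $\borel$- and of $\borel'$-modules. For weight $\borel$-modules $M,N$, post-composition with $m$ gives a map $\Hom_\borel(M,T)\otimes\Hom_\borel(N,T)\mor\Hom_\borel(M\otimes N,T)$, $f\otimes g\mapsto m\circ(f\otimes g)$, equivariant for the diagonal $\borel'$-action on the source and the post-composition $\borel'$-action on the target. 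Since every weight of $T$ lies in $\Lmb_n$, any $\borel$-morphism $M\otimes N\mor T$ annihilates every weight space of $M\otimes N$ whose weight lies outside $\Lmb_n$ and hence factors through $(M\otimes N)^{\Lmb_n}$; thus $\Hom_\borel(M\otimes N,T)=F((M\otimes N)^{\Lmb_n})$ in $\catc_n$. As the target lies in $\catc_n$, the map above factors through the largest quotient of $FM\otimes FN$ in $\catc_n$, yielding a natural transformation $\overline\Phi_{M,N}\colon(FM\otimes FN)^{\Lmb_n}\mor F((M\otimes N)^{\Lmb_n})$, and the task is to show it is an isomorphism for $M,N\in\catc_n^\std$.

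For the reduction I would use that the class of KP-filtered weight $\borel$-modules is closed under tensor products (a fact from \cite{W}; it is the module-level counterpart of the positivity of products of Schubert polynomials). Together with Proposition \ref{prop:hw-quot} and Proposition \ref{prop:schub-vanish}, this implies that $(M\otimes N)^{\Lmb_n}$, $(FM\otimes FN)^{\Lmb_n}$ and their images under $F$ all lie in $\catc_n^\std$, and that the functors $M\mapsto(FM\otimes FN)^{\Lmb_n}$ and $M\mapsto F((M\otimes N)^{\Lmb_n})$, and likewise in the variable $N$, take short exact sequences in $\catc_n^\std$ to short exact sequences. Since $\overline\Phi$ is natural, the five lemma and a d\'evissage in both variables then reduce the problem to the case $M=\smod_\lmb$, $N=\smod_\mu$ with $\lmb,\mu\in\Lmb_n$.

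In that case I would first check equality of characters. Both sides of $\overline\Phi_{\smod_\lmb,\smod_\mu}$ are KP-filtered, so their characters are determined by their standard-filtration multiplicities; discarding, via Proposition \ref{prop:schub-vanish}, the summands $\smod_\nu$ with $\nu\notin\Lmb_n$ occurring in the ambient KP filtrations of $\smod_\lmb\otimes\smod_\mu$ and of $\smod_{\overline\lmb}\otimes\smod_{\overline\mu}$, one finds that $\ch\big((FM\otimes FN)^{\Lmb_n}\big)$ and $\ch F\big((M\otimes N)^{\Lmb_n}\big)$ both reduce modulo $I$ in $H_n$ to $\iota(\schub_\lmb\schub_\mu)$, by Proposition \ref{prop:hn-invol} and Theorem \ref{thm:dual-cn}. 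As both characters lie in $\bigoplus_{\nu\in\Lmb_n}\ZZ x^\nu$, the isomorphism $\bigoplus_{\nu\in\Lmb_n}\ZZ x^\nu\overset{\sim}{\mor}H_n$ forces them to coincide, so the two sides have the same dimension. It therefore suffices to prove that $\overline\Phi_{\smod_\lmb,\smod_\mu}$ is surjective (equivalently, by the dimension count, injective), i.e.\ that every $\borel$-morphism $\smod_\lmb\otimes\smod_\mu\mor T$ is a sum of morphisms of the form $m\circ(f\otimes g)$. For this I would use the explicit description of $F\smod_\nu=\Hom_\borel(\smod_\nu,T)$ from the proof of Theorem \ref{thm:dual-cn} (realizing it as $\ualg(\borel')u_{\overline\nu}\subset T$ with $u_{\overline\nu}=\pm u_\nu'$, under which $xu_{\overline\nu}$ corresponds to the morphism $v\mapsto x'v$), together with the fact that $\smod_\lmb\otimes\smod_\mu$ is generated as a $\ualg(\borel)$-module by the subspace $u_\lmb\otimes\smod_\mu$.

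The main obstacle is exactly this surjectivity: producing enough product-form morphisms $\smod_\lmb\otimes\smod_\mu\mor T$ to span the whole Hom-space. Everything preceding it is either formal (the construction of $\overline\Phi$ and the d\'evissage) or a bookkeeping with characters resting on results already in the paper. A possible alternative route to the surjectivity, bypassing the reduction to standards, is to observe that every KP-filtered module embeds with KP-filtered cokernel in a finite direct sum of copies of $T$ (since $T=\bigoplus_\lmb T(\lmb)$ with each $T(\lmb)$ a summand of $T$) and that $\Ext^1(-,T)$ vanishes on KP-filtered modules; naturality of $\overline\Phi$ then reduces the surjectivity to the single case $M=N=T$, i.e.\ to analysing the image of the map $\End_\borel(T)\otimes\End_\borel(T)\mor\Hom_\borel(T\otimes T,T)$ induced by $m$ and the factor-wise action.
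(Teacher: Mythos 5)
Your setup is correct and matches the paper's: the comparison map $FM\otimes FN\mor F(M\otimes N)=F((M\otimes N)^{\Lmb_n})$ is exactly the map the paper works with, the character identity $\ch(FM)=\iota(\ch(M))$ (Lemma \ref{lem:char-iota}) plus the isomorphism $\bigoplus_{\nu\in\Lmb_n}\ZZ x^\nu\cong H_n$ does reduce the isomorphism to surjectivity of this map (Lemma \ref{lem:surj-isom}), and a d\'evissage in both variables using exactness of $F$ and of $(-)^{\Lmb_n}$ on $\catc^\std$ is sound (this is Lemma \ref{lem:tensor-benri}(2) in the paper). So the scaffolding is right.

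But the substantial content of the theorem is the surjectivity, and you have not proved it -- you say so yourself. Moreover the base case you propose to reduce to, namely $M=\smod_\lmb$, $N=\smod_\mu$ for arbitrary $\lmb,\mu\in\Lmb_n$, is not a tractable base case, and is not the one the paper uses. The paper's route is more indirect and the indirection is essential: it first proves surjectivity only for $N=\smod_{s_i}\cong K^i$, i.e.\ for $\mathcal{P}(\smod_w,\smod_{s_i})$. This case is accessible because $\smod_w\otimes K^i$ is generated by the explicit elements $v_{pq}(w)=e_{pq}^{m_{pq}(w)}u_w\otimes u_p$ (from \cite[Lemma 3.3]{W2}), and one can match them against explicit elements $\phi(v_{\overline{q},\overline{p}}(\overline{w}))$ of $F\smod_w\otimes F\smod_{s_i}$ using a genuinely combinatorial triangularity statement (Lemma \ref{pqpq}), with the filtration bookkeeping in Lemma \ref{lem:fm-filtr} used to see that these images are cyclic generators of the successive quotients. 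Having the base case $\mathcal{P}(\smod_w,\smod_{s_i})$, the paper then bootstraps: d\'evissage in the first variable gives $\mathcal{P}(M,\smod_{s_i})$ for all $M\in\catc^\std$; the tensor compatibility $\mathcal{P}(L,M)\wedge\mathcal{P}(L\otimes M,N)\Rightarrow\mathcal{P}(L,M\otimes N)$ (Lemma \ref{lem:tensor-benri}(3)) then gives $\mathcal{P}(M,\smod_{s_{i_1}}\otimes\cdots\otimes\smod_{s_{i_r}})$; since $T(\lmb)$ is a direct summand of such a tensor product one gets $\mathcal{P}(M,T(\lmb))$; and finally $\smod_\lmb\inj T(\lmb)$ with $\catc^\std$-cokernel gives $\mathcal{P}(M,\smod_\lmb)$ and then $\mathcal{P}(M,N)$ in general. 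In other words, the correct base case is $(\smod_w,\smod_{s_i})$, not $(\smod_\lmb,\smod_\mu)$; the latter would require exhibiting enough product-form morphisms $\smod_\lmb\otimes\smod_\mu\mor T$ directly, for which no explicit mechanism is offered.

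Your alternative reduction (embed everything into copies of $T$, reduce to analysing $\End(T)\otimes\End(T)\mor\Hom(T\otimes T,T)$) is a formally valid reduction, but it merely trades one opaque surjectivity statement for another: the paper's success comes precisely from going through $\smod_{s_i}$, where the generators are small enough to be tracked combinatorially. So the gap is the missing proof of surjectivity, and the proposed base case should be replaced by the asymmetric one $(\smod_w,\smod_{s_i})$ together with the tensor bootstrap.
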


Let $\catc_+$ be the category of all finite dimensional weight $\borel$-modules whose weights are in $\nonneg^n$. Note that if $M, N \in \catc_+$ then $M \otimes N \in \catc_+$. 
Using the terminology from highest weight categories we say that $M \in \catc_+$ has a standard filtration if $M$ has a filtration whose successive quotients are of the form $\smod_\lmb$ ($\lmb \in \nonneg^n$). Note that, as we showed in \cite{W2}, if $M, N \in \catc_+$ have standard filtrations then $M \otimes N$ also has a standard filtration. 

\begin{rmk}
If $L \in \catc_+$ has a standard filtration, 
then as we show below, $\ch(L^{\Lmb_n}) = \ch(L)$ holds in the ring $H_n$. 
So, together with Theorem \ref{thm:dual-cn}, this theorem can be seen as a module theoretic counterpart of Proposition \ref{prop:hn-invol}; i.e. the claim that $\schub_w \mapsto \schub_{w_0ww_0}$ is a ring automorphism on $H_n$. 
\end{rmk}

First we prepare some lemmas. 

\begin{lem}
\label{lem:char-iota}
Let $\iota: H_n \mor H_n$ be the ring automorphism in Proposition \ref{prop:hn-invol}. If $M \in \catc_+$ has a standard filtration, then $\ch(FM)=\iota(\ch(M))$ in $H_n$. 
\end{lem}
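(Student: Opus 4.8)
The plan is to reduce the statement to a character identity on standard modules $\smod_\lmb$ and then invoke Proposition \ref{prop:hn-invol}. First I would note that the character map $M \mapsto \ch(M) \in H_n$ is additive on short exact sequences (since characters add and everything lies in the quotient ring $H_n$), so if $M \in \catc_+$ has a standard filtration with subquotients $\smod_{\lmb^{(1)}}, \ldots, \smod_{\lmb^{(m)}}$, then $\ch(M) = \sum_i \schub_{\lmb^{(i)}}$ in $H_n$. The functor $F = \Hom_\catc(-, T)$ is exact on $\catc^\std$ (by Proposition \ref{prop:generaldual} it is a contravariant equivalence $\catc^\std \to (\catc^\vee)^\std$, hence sends short exact sequences of standardly filtered objects to short exact sequences), so $FM$ has a standard filtration in $\catc_n^\vee \simeq \catc_n$ with subquotients $F(\smod_{\lmb^{(i)}})$. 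Thus $\ch(FM) = \sum_i \ch(F(\smod_{\lmb^{(i)}}))$ in $H_n$, and it suffices to prove the lemma for $M = \smod_\lmb$.

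For a single standard module the computation is direct: by Theorem \ref{thm:dual-cn} (together with the generalization $\smod_w \mapsto \smod_{w_0 w w_0}$, extended to $\smod_\lmb$ via the twisting convention $\smod_\lmb = \smod_w \otimes K_{-k\one}$), we have $F(\smod_\lmb) \cong \smod_{\overline{\lmb}}$ where $\overline{\lmb} = \inv(w_0 w w_0)$ for $\lmb = \inv(w)$ — one must check the constant-twist part is handled correctly, but $F$ of a twist by $K_{-k\one}$ is a twist by $K_{k\one}$ (since $T$ is self-dual up to $K_\rho$ and $F$ is contravariant), and on characters in $H_n$ multiplication by $x_1 \cdots x_n$ is the identity, so the twist is invisible in $H_n$. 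Hence $\ch(F(\smod_\lmb)) = \schub_{w_0 w w_0}$ in $H_n$. On the other side, $\iota(\ch(\smod_\lmb)) = \iota(\schub_w) = \schub_{w_0 w w_0}$ by Proposition \ref{prop:hn-invol} (again the constant twist is invisible in $H_n$). This gives $\ch(F(\smod_\lmb)) = \iota(\ch(\smod_\lmb))$ in $H_n$, completing the single-module case.

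Assembling: $\ch(FM) = \sum_i \schub_{w_0 w_i w_0} = \sum_i \iota(\schub_{w_i}) = \iota\!\left(\sum_i \schub_{w_i}\right) = \iota(\ch(M))$, using linearity of $\iota$.

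The main obstacle I anticipate is a bookkeeping one rather than a conceptual one: namely, carefully matching the codomain of $F$, which a priori is $\catc_n^\vee = \End(T)\text{-mod}$, with $\catc_n$ itself via the equivalence constructed in the proof of Theorem \ref{thm:dual-cn}, and making sure the induced $\borel$-action (via $\borel'$ acting on $T$) gives the weight-space decomposition so that the character computed in $\catc_n^\vee$ agrees with the character computed as an honest $\borel$-module. This identification was established in the proof of Theorem \ref{thm:dual-cn} (the isomorphism $\phi: A \xrightarrow{\sim} \End(T)$ and the fact that an $\End(T)$-module $P$ has weight decomposition $\bigoplus_\lmb \pi_\lmb P$), so it can be cited; but one should state explicitly that we use that identification so that $\ch(FM)$ is well-defined as an element of $H_n$ and that $F(\smod_\lmb)$ corresponds to $\smod_{\overline\lmb}$ under it. A secondary point to handle with care is passing between $\catc_+$ (weights in $\nonneg^n$, where $\catc_+$ is not a highest weight category but has a well-behaved notion of standard filtration) and the finite order ideals $\Lmb$ on which Theorem \ref{kp-hwc} applies: since $M$ is finite dimensional its weights lie in some finite order ideal $\Lmb \supset \Lmb_n$... — actually more simply, the character identity in $H_n$ only depends on the multiset of standard subquotients, which is intrinsic to $M$, so one may work inside any finite $\catc_\Lmb$ containing the relevant weights.
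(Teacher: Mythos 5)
Your reduction to standard modules is sound (the exactness of $F = \Hom(-,T)$ on the standardly filtered part, which follows from $\Ext^1(\smod_\lambda, T)=0$ for all $\lambda$ since $T$ has a costandard filtration), and your treatment of the case $\lambda \in \Lambda_n$ via Theorem \ref{thm:dual-cn} matches the paper. However, there is a genuine gap in the remaining case, and the twisting argument you propose to cover it is incorrect.

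A standard filtration of $M \in \catc_+$ has subquotients $\smod_\lambda$ with $\lambda \in \nonneg^n$, and in general some of these are $\smod_w$ with $w \in S_\infty^{(n)} \smallsetminus S_n$ (equivalently $\lambda \notin \Lambda_n$, up to shift). For such a $\lambda$ the correct answer is $F\smod_\lambda = 0$: the generator of $\smod_\lambda$ has weight $\lambda$, while $T$ has weights only in $\Lambda_n$, so every homomorphism $\smod_\lambda \to T$ vanishes. Correspondingly $\schub_w = 0$ in $H_n$ by Proposition \ref{prop:schub-vanish}, so the identity $\ch(F\smod_\lambda) = \iota(\ch(\smod_\lambda))$ holds trivially as $0=0$. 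This is precisely what the paper's proof adds to the $S_n$ case, and it is what your argument is missing. The twist step you invoke instead — that $F$ takes a twist by $K_{-k\one}$ to a twist by $K_{k\one}$ — is false for $\Hom(-,T)$: if $w \in S_n$ and $k>0$ then $F(\smod_w \otimes K_{k\one}) = \Hom(\smod_w \otimes K_{k\one}, T) = 0$ (the weights of $\smod_w \otimes K_{k\one}$ all lie outside $\Lambda_n$), whereas $F(\smod_w) \otimes K_{-k\one} = \smod_{w_0ww_0} \otimes K_{-k\one} \neq 0$. The ``self-dual up to $K_\rho$'' reasoning does not fix this because $\Hom_\borel(-,T)$ is the invariants of the internal Hom and is not compatible with twisting in the way you assert. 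Moreover, not every $\lambda \in \nonneg^n \smallsetminus \Lambda_n$ can even be written as $\inv(v)+k\one$ with $v \in S_n$, so the reduction would not reach all the needed subquotients even if the twist identity held. Replacing the twist paragraph by the direct vanishing argument ($F\smod_w = 0$ and $\schub_w \in I$ for $w \notin S_n$) repairs the proof and brings it into agreement with the paper's.
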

\begin{proof}
Since the extensions of KP modules with $T$ vanish, if we have an exact sequence $0 \mor L \mor M \mor N \mor 0$ with $L, M, N \in \catc_+$ having standard filtrations, then $0 \mor FN \mor FM \mor FL \mor 0$ is exact. Thus we only have to show the lemma for $M=\smod_w$ ($w \in S_\infty^{(n)}$). 

The case $w \in S_n$ follows from Theorem \ref{thm:dual-cn}.
If $w \in S_\infty^{(n)} \smallsetminus S_n$ then we have $F\smod_w=\Hom(\smod_w, T)=0$ since $\smod_w$ is generated by an element of weight $\inv(w)$ while the weight space $T_{\inv(w)}$ is zero. Thus the lemma follows for this case since $\schub_w = 0$ in $H_n$. 
\end{proof}

\begin{lem}
\label{lem:char-cong}
Let $M \in \catc_+$ have a standard filtration. Then $\ch(M^{\Lmb_n}) = \ch(M)$ as elements of $H_n$.  
If $\overline{M} \in \catc_n$ is a quotient of $M$ and $\ch(\overline{M}) = \ch(M)$ in $H_n$, then $\overline{M} \cong M^{\Lmb_n}$. 
\end{lem}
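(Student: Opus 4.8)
The plan is to prove the two assertions of Lemma~\ref{lem:char-cong} separately, the first by reducing to standard objects and then to a vanishing statement in $H_n$, and the second by a dimension-counting argument graded by weights.

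First I would prove that $\ch(M^{\Lmb_n}) = \ch(M)$ in $H_n$. By Proposition~\ref{prop:hw-quot} applied to the order ideal $\Lmb_n \subset \nonneg^n$ (which is an order ideal with respect to $\prec$, as noted before Theorem~\ref{kp-hwc}), the kernel $\Ker(M \surj M^{\Lmb_n})$ has a filtration by standard objects $\smod_\mu$ with $\mu \in \nonneg^n \smallsetminus \Lmb_n$, and $M^{\Lmb_n}$ itself has a filtration by $\smod_\lmb$ with $\lmb \in \Lmb_n$. Since $\ch$ is additive on short exact sequences, it therefore suffices to show that $\ch(\smod_\mu) = \schub_\mu$ vanishes in $H_n$ whenever $\mu \in \nonneg^n \smallsetminus \Lmb_n$. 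Writing $\mu = \inv(w)$ with $w \in S_\infty^{(n)}$, the condition $\mu \notin \Lmb_n$ is exactly the condition $w \notin S_n$, so this is precisely Proposition~\ref{prop:schub-vanish}. Hence $\ch(M) - \ch(M^{\Lmb_n}) = \ch(\Ker(M \surj M^{\Lmb_n}))$ is a sum of $\schub_\mu$'s with $w \notin S_n$, which is zero in $H_n$.

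Next I would prove the second assertion: if $\overline{M} \in \catc_n$ is a quotient of $M$ with $\ch(\overline{M}) = \ch(M)$ in $H_n$, then $\overline{M} \cong M^{\Lmb_n}$. Since $\overline{M}$ lies in $\catc_n$, all its weights are in $\Lmb_n$, so the surjection $M \surj \overline{M}$ factors through the largest such quotient, giving a surjection $p\colon M^{\Lmb_n} \surj \overline{M}$. It remains to show $p$ is injective, and for this it is enough to check that $M^{\Lmb_n}$ and $\overline{M}$ have the same (finite) dimension, equivalently the same ordinary character $\ch(-) \in \ZZ[x_1,\dots,x_n]$ as weight modules, not merely the same image in $H_n$. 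Here I would use that both $\ch(M^{\Lmb_n})$ and $\ch(\overline{M})$ are supported on $\Lmb_n$: the former because $M^{\Lmb_n} \in \catc_n$, the latter because $\overline{M} \in \catc_n$. By the first part, $\ch(M^{\Lmb_n})$ and $\ch(\overline{M})$ have the same image in $H_n$ (both equal the image of $\ch(M)$, the latter by hypothesis). But the natural map $\ZZ[x_1,\dots,x_n] \surj H_n$ restricts to an isomorphism $\bigoplus_{\lmb \in \Lmb_n} \ZZ x^\lmb \overset{\sim}\mor H_n$, as recalled from \cite{Man} in the preliminaries; since both characters lie in $\bigoplus_{\lmb \in \Lmb_n} \ZZ x^\lmb$ and have the same image in $H_n$, they are equal in $\ZZ[x_1,\dots,x_n]$. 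Therefore $\dim M^{\Lmb_n} = \dim \overline{M}$, so the surjection $p$ is an isomorphism.

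The only genuine subtlety—what I would flag as the main point to get right rather than a real obstacle—is keeping straight the distinction between equality of characters in $H_n$ and equality in $\ZZ[x_1,\dots,x_n]$, and ensuring that the relevant characters are genuinely supported on $\Lmb_n$ so that the injectivity of $\bigoplus_{\lmb \in \Lmb_n} \ZZ x^\lmb \to H_n$ can be invoked. Everything else is a direct assembly of Proposition~\ref{prop:hw-quot}, Proposition~\ref{prop:schub-vanish}, and the basis statement for $H_n$; no new estimates or constructions are needed.
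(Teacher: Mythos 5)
Your proof is correct and follows essentially the same route as the paper: for the first claim, both arguments combine Proposition~\ref{prop:hw-quot} (to see that $\Ker(M \surj M^{\Lmb_n})$ has a filtration by $\smod_v$ with $v \in S_\infty^{(n)} \smallsetminus S_n$) with Proposition~\ref{prop:schub-vanish}, and for the second claim both rely on the isomorphism $\bigoplus_{\lmb \in \Lmb_n} \ZZ x^\lmb \cong H_n$. You simply make explicit the small step that the paper leaves implicit, namely that $M \surj \overline{M}$ factors through $M^{\Lmb_n}$ by maximality, reducing the problem to a dimension count.
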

\begin{proof}
By Proposition \ref{prop:hw-quot}, $\Ker(M \surj M^{\Lmb_n})$ has a filtration whose subquotients are of the form $\smod_v$ ($v \in S_\infty^{(n)} \smallsetminus S_n$). 
Thus $\ch(M) = \ch(M^{\Lmb_n}) + (\text{a linear combination of $\schub_v$ ($v \in S_\infty^{(n)} \smallsetminus S_n$)})$, and the second term vanishes in $H_n$ by Proposition \ref{prop:schub-vanish}. 
The second claim follows from the first claim since $\bigoplus_{\lmb \in \Lmb_n} \ZZ x^\lmb \cong H_n$. 
\end{proof}

\begin{lem}
\label{lem:surj-isom}
Let $M, N \in \catc_+$ have standard filtrations.
Suppose that the morphism $FM \otimes FN \mor F(M \otimes N)$ given by $\phi \otimes \psi \mapsto (m \otimes n \mapsto \phi(m) \wedge \psi(n))$ is surjective. 
Then it induces an isomorphism $(FM \otimes FN)^{\Lmb_n} \cong F(M \otimes N) \; (\cong F((M \otimes N)^{\Lmb_n}))$. 
\end{lem}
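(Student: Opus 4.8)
The plan is to deduce the isomorphism from the surjectivity hypothesis together with a dimension (equivalently, character) count in $H_n$. First I would note that the given map $FM \otimes FN \mor F(M \otimes N)$ factors through $(FM \otimes FN)^{\Lmb_n}$: indeed $F(M \otimes N) = \Hom(M \otimes N, T)$ is an object of $\catc_n$ (its weights lie in $\Lmb_n$ since, as observed in the previous section, any $\End(T)$-module, hence any $\Hom(-,T)$, has all weights in $\Lmb_n$), so any $\borel$-module map from $FM \otimes FN$ into it kills the kernel of $FM \otimes FN \surj (FM \otimes FN)^{\Lmb_n}$ by maximality of that quotient. Thus the surjection descends to a surjection $(FM \otimes FN)^{\Lmb_n} \surj F(M \otimes N)$.

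Next I would compare characters in $H_n$. Since $M$ and $N$ have standard filtrations, so do $M \otimes N$, $FM$, $FN$, and $FM \otimes FN$ (the first by the tensor-product result of \cite{W2}, the $F$-images by Proposition \ref{prop:generaldual}). Applying Lemma \ref{lem:char-iota} and the fact that $\iota$ is a ring homomorphism on $H_n$, we get
\[
\ch(F(M \otimes N)) = \iota(\ch(M \otimes N)) = \iota(\ch M)\,\iota(\ch N) = \ch(FM)\,\ch(FN) = \ch(FM \otimes FN)
\]
in $H_n$, and then by Lemma \ref{lem:char-cong} this equals $\ch((FM \otimes FN)^{\Lmb_n})$ in $H_n$.

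Finally I would invoke the uniqueness part of Lemma \ref{lem:char-cong}: $F(M \otimes N)$ is a quotient of $FM \otimes FN$ lying in $\catc_n$ with $\ch(F(M \otimes N)) = \ch(FM \otimes FN)$ in $H_n$, so $F(M \otimes N) \cong (FM \otimes FN)^{\Lmb_n}$, and the surjection constructed above is this isomorphism. Combining with Lemma \ref{lem:char-cong} applied to $M \otimes N$ (which gives $F(M \otimes N) \cong F((M \otimes N)^{\Lmb_n})$ since $F$ only sees the $\catc_n$-part — more precisely $\Hom(M \otimes N, T) = \Hom((M\otimes N)^{\Lmb_n}, T)$ as $T \in \catc_n$), we obtain the stated isomorphism $(FM \otimes FN)^{\Lmb_n} \cong F((M \otimes N)^{\Lmb_n})$.

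The only real subtlety here is bookkeeping: one must be careful that every object to which Lemma \ref{lem:char-cong} or Lemma \ref{lem:char-iota} is applied genuinely has a standard filtration in $\catc_+$, and that the factorization through the maximal $\catc_n$-quotient is legitimate for $\borel$-module maps (not just $\catc_n$-morphisms). Neither point is deep, so I expect no serious obstacle — the lemma is essentially a formal consequence of the preceding three lemmas plus the surjectivity assumption, and the actual work (establishing surjectivity in the cases of interest) is presumably left to wherever this lemma gets applied.
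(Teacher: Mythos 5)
Your proposal is correct and follows essentially the same route as the paper: observe that $F(M\otimes N)$ lies in $\catc_n$, compute $\ch(F(M\otimes N)) = \iota(\ch M\,\ch N) = \ch(FM\otimes FN)$ in $H_n$ via Lemma \ref{lem:char-iota} and multiplicativity of $\iota$, then invoke the second statement of Lemma \ref{lem:char-cong}. The only difference is cosmetic — you spell out the factorization through $(FM\otimes FN)^{\Lmb_n}$ and the identification $\Hom(M\otimes N,T) = \Hom((M\otimes N)^{\Lmb_n},T)$, both of which the paper leaves implicit.
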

\begin{proof}
We have, as vector spaces, $F(M \otimes N) = \Hom(M \otimes N, T) = \bigoplus_{\lmb \in \Lmb_n} \Hom(M \otimes N, T(\lmb))$. It can be seen that $\Hom(M \otimes N, T(\lmb))$ is the $\overline{\lmb}$-weight space of the $\borel$-module $F(M \otimes N)$. 
Thus $F(M \otimes N) \in \catc_n$. 
By Lemma \ref{lem:char-iota} we have, in $H_n$, $\ch(F(M \otimes N)) = \iota(\ch(M)\ch(N)) = \iota(\ch(M))\iota(\ch(N)) = \ch(FM \otimes FN)$. 
Thus the claim follows from the second statement in Lemma \ref{lem:char-cong}. 
\end{proof}

For $M, N \in \catc_+$ having standard filtrations, let $\mathcal{P}(M,N)$ be the claim that the map $FM \otimes FN \mor F(M \otimes N)$ above is surjective (and thus $(FM \otimes FN)^{\Lmb_n} \cong F((M \otimes N)^{\Lmb_n})$). 
\begin{lem}
\label{lem:tensor-benri}
Let $L, M, N, X \in \catc_+$ have standard filtrations. 
Then the following implications hold: 
\begin{enumerate}
\item If $L$ is a direct sum component of $M$ then $\mathcal{P}(M, X)$ implies $\mathcal{P}(L, X)$. 
\item Suppose that there exists an exact sequence $0 \mor L \mor M \mor N \mor 0$. 
Then $\mathcal{P}(L, X) \wedge \mathcal{P}(N,X) \implies \mathcal{P}(M, X)$ and $\mathcal{P}(M, X) \implies \mathcal{P}(L, X)$ hold (in fact $\mathcal{P}(M, X)$ also implies $\mathcal{P}(N, X)$, but we do not need it here). 
\item $\mathcal{P}(L,M)$ and $\mathcal{P}(L \otimes M, N)$ implies $\mathcal{P}(L, M \otimes N)$. 
\end{enumerate}
\end{lem}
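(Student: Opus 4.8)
The plan is to prove the three implications in Lemma \ref{lem:tensor-benri} by working directly with the surjectivity statement $\mathcal{P}(-,-)$, i.e. with the natural "wedge-composition" map $\Phi_{M,N}\colon FM \otimes FN \mor F(M\otimes N)$, and tracking how it behaves under direct summands, short exact sequences, and iterated tensoring. Throughout I will freely use that $M,N,L,X$ and all the relevant tensor products have standard filtrations (the latter by the result of \cite{W2} quoted above), so that applying $F=\Hom(-,T)$ to a short exact sequence of such modules yields again a short exact sequence (since $\Ext^1(\smod_\lmb,T)=0$, as $T$ is a full tilting).

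For (1): if $M=L\oplus L'$, then $FM=FL\oplus FL'$ and $F(M\otimes N)=F(L\otimes N)\oplus F(L'\otimes N)$ compatibly, and $\Phi_{M,N}$ is the direct sum $\Phi_{L,N}\oplus\Phi_{L',N}$ together with "cross terms" that land in the correct summands; restricting to the $L$-component shows $\Phi_{M,N}$ surjective $\Rightarrow \Phi_{L,N}$ surjective. For (2): from $0\mor L\mor M\mor N\mor 0$ and exactness of $-\otimes X$ on standardly-filtered modules we get $0\mor L\otimes X\mor M\otimes X\mor N\otimes X\mor 0$, hence applying $F$ a short exact sequence $0\mor F(N\otimes X)\mor F(M\otimes X)\mor F(L\otimes X)\mor 0$; similarly $0\mor FN\mor FM\mor FL\mor 0$, which after tensoring with $FX$ stays exact because $FL$ has a standard filtration (this is the point where I need that $F$ sends standardly-filtered modules to standardly-filtered modules, which is part of Proposition \ref{prop:generaldual}). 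These two exact sequences fit into a commutative ladder with vertical maps $\Phi_{N,X},\Phi_{M,X},\Phi_{L,X}$, and a short diagram chase (five-lemma-style) gives both implications; for the first, surjectivity of the two outer verticals forces surjectivity of the middle, and for the second, a quick chase shows surjectivity of the middle forces surjectivity of the left.

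For (3): I would write down explicitly that the composite $FL\otimes FM\otimes FN \xrightarrow{\Phi_{L,M}\otimes \mathrm{id}} F(L\otimes M)\otimes FN \xrightarrow{\Phi_{L\otimes M,N}} F(L\otimes M\otimes N)$ equals (up to the obvious reassociation) the composite $FL\otimes FM\otimes FN \xrightarrow{\mathrm{id}\otimes \Phi_{M,N}} FL\otimes F(M\otimes N)\xrightarrow{\Phi_{L,M\otimes N}} F(L\otimes M\otimes N)$; this is just the statement that $\phi\otimes\psi\otimes\chi \mapsto (l\otimes m\otimes n\mapsto \phi(l)\wedge\psi(m)\wedge\chi(n))$ can be computed in either order, which is immediate from associativity of the wedge product in $\bigwedge^\bullet(K^{n-1}\oplus\cdots\oplus K^1)$. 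Given $\mathcal{P}(L,M)$, the first arrow $\Phi_{L,M}\otimes\mathrm{id}$ is surjective (tensoring a surjection with $FN$), and given $\mathcal{P}(L\otimes M,N)$ the second arrow $\Phi_{L\otimes M,N}$ is surjective, so the whole composite is surjective; hence the equal composite through $\Phi_{L,M\otimes N}$ is surjective, which forces $\Phi_{L,M\otimes N}$ to be surjective, i.e. $\mathcal{P}(L,M\otimes N)$.

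The main obstacle I anticipate is bookkeeping rather than conceptual: making sure every tensor product appearing has a standard filtration so that $F$ is exact on the relevant sequences and so that $-\otimes(\text{standardly filtered})$ preserves exactness after applying $F$; and checking that the various natural maps $\Phi$ really do form commuting squares, which requires writing them down on elements carefully (the maps are given by an explicit formula, so this is routine but must be done). No single step should require a genuinely new idea — the content is the compatibility of the wedge-composition maps with the exact structure, and everything else is the diagram chases above.
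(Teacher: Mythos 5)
Your proposal is correct and follows essentially the same route as the paper: in (2) the paper draws precisely the commutative ladder you describe, with rows exact because $\Ext^1(N,T)$ and $\Ext^1(N\otimes X,T)$ vanish, and in (3) the paper uses exactly the commuting square you write down. Two minor points: where you say ``surjectivity of the middle forces surjectivity of the left'' you mean the \emph{right} vertical $\Phi_{L,X}$ (which lands on $F(L\otimes X)$ since $F$ is contravariant), and the step ``tensoring with $FX$ stays exact because $FL$ has a standard filtration'' is unnecessarily justified --- tensoring over a field $K$ is exact without any filtration hypothesis.
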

\begin{proof}
\begin{enumerate}
\item is clear since $F$ preserves direct sums. 
\item
We have a commutative diagram 
\[
\begin{CD}
0 @>>> FN \otimes FX @>>> FM \otimes FX @>>> FL \otimes FX @>>> 0 \\
@. @VVV @VVV @VVV @. \\
0 @>>> F(N \otimes X) @>>> F(M \otimes X) @>>> F(L \otimes X) @>>> 0. 
\end{CD}
\]
Here the rows are exact since $\Ext^1(N, T)$ and $\Ext^1(N \otimes X, T)$ vanish. 
This shows $\mathcal{P}(L, X) \wedge \mathcal{P}(N,X) \implies \mathcal{P}(M, X)$ and $\mathcal{P}(M, X) \implies \mathcal{P}(L, X)$. 

\item 
This holds since 
\[
\begin{CD}
FL \otimes FM \otimes FN @>>> F(L \otimes M) \otimes FN \\
@VVV @VVV \\
FL \otimes F(M \otimes N) @>>> F(L \otimes M \otimes N)
\end{CD}
\]
commutes. 
\end{enumerate}
\end{proof}

\begin{lem}
\label{lem:fm-filtr}
Let $M \in \catc_+$ have a standard filtration. Let $\lmb \in \Lmb_n$. 
Let $V \subset \Hom(M, T)$ be the submodule consisting of all homomorphisms 
which vanish on the $\mu$-weight spaces for any $\mu > \lmb$ (it is a submodule since the action of $\borel'$ on $T$ preserves weights with respect to $\csa \subset \borel$). 
Then $\Hom(M, T)/V \cong \Hom(M, T)^{<' \overline{\lmb}}$, 
the largest quotient of $\Hom(M, T)$ whose weights are all $<' \overline{\lmb}$
(recall that for $\lmb = \inv(w) \in \Lmb_n$ we defined $\overline{\lmb} = \inv(w_0 w w_0)$).  
\end{lem}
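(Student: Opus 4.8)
The plan is to prove the equivalent statement $V = W$, where $W := \Ker\!\big(\Hom(M,T)\surj\Hom(M,T)^{<'\overline{\lmb}}\big)$. Since $M$ has a standard filtration, $F$ is exact on it and $FM := \Hom(M,T)$ lies in $\catc_n^\std$ (by Theorem \ref{thm:dual-cn}), so Propositions \ref{prop:hw-homext} and \ref{prop:hw-quot} apply to $FM$; moreover $\Hom(M,T)^{<'\overline\lmb}$ coincides with $\Hom(M,T)^{\Lmb'}$ for the order ideal $\Lmb' = \{\xi\in\Lmb_n : \xi <'\overline\lmb\}$, so both $W$ and $FM/W$ have standard filtrations. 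The inclusion $V\supseteq W$ is the easy half: recalling $(FM)_\xi = \Hom(M, T(\overline\xi))$, restriction embeds $(FM/V)_\xi$ into $\bigoplus_{\mu>\lmb}\Hom_K(M_\mu, T(\overline\xi)_\mu)$, so nonvanishing forces a weight $\mu>\lmb$ of $T(\overline\xi)$; since all weights of $T(\overline\xi)$ are $\preceq\overline\xi$, this gives $\lmb<\mu\le\overline\xi$, hence $\lmb<\overline\xi$, hence $\xi<'\overline\lmb$ by the order-reversal $\overline{(\cdot)}$. Thus $FM/V$ has all weights $<'\overline\lmb$, i.e.\ $W\subseteq V$.

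The reverse inclusion I would prove by induction on the length of a standard filtration of $M$, the base case being $M = \smod_w$. If $w\notin S_n$ then $FM = 0$. If $w\in S_n$ then $FM\cong\smod_{\overline{\inv(w)}}$ by Theorem \ref{thm:dual-cn}, and since a standard object admits only the trivial standard filtration, Proposition \ref{prop:hw-quot} forces $(\smod_{\overline{\inv(w)}})^{<'\overline\lmb}$ to equal $FM$ when $\inv(w)>\lmb$ (equivalently $\overline{\inv(w)}<'\overline\lmb$) and $0$ otherwise, so $W$ is $0$ or $FM$ accordingly. In the first case $u_w$ has weight $\inv(w)>\lmb$, so every $f\in V$ kills $u_w$ and hence all of $\smod_w = \ualg(\borel)u_w$, giving $V = 0 = W$. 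In the second case, for $\mu>\lmb$ a morphism $f\colon\smod_w\to T$ sends $(\smod_w)_\mu$ into $T_\mu$; nonvanishing would force $\mu\preceq\inv(w)$, whence $\lmb<\mu\le\inv(w)$ and $\inv(w)>\lmb$, a contradiction, so $f$ vanishes on all these weight spaces and $V = FM = W$.

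For the inductive step I would take a short exact sequence $0\to L\to M\to N\to 0$ in $\catc_+$ with $L=\smod_w$ at the bottom of a standard filtration and $N$ shorter, and apply the exact functor $F$ (using $\Ext^1(\smod_w,T)=0$). A direct check gives $V_N = FN\cap V_M$ with $V_M$ mapping into $V_L$, so $0\to FN/V_N\to FM/V_M\to FL/V_L$ is a complex which is left exact and surjective onto $FL/V_L$; likewise $W_N = FN\cap W_M$ and $0\to FN/W_N\to FM/W_M\to FL/W_L\to 0$ is short exact, both following from Proposition \ref{prop:hw-quot} together with additivity of characters. With the inductive hypotheses $V_N = W_N$ and $V_L = W_L$ these give $\dim FM/V_M\ge\dim FN/V_N + \dim FL/V_L = \dim FM/W_M$, while $W_M\subseteq V_M$ (from $V\supseteq W$) gives $\dim FM/V_M\le\dim FM/W_M$; hence $FM/V_M = FM/W_M$, i.e.\ $V_M = W_M$.

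The step I expect to be the main obstacle is this inductive step: the $V$-side sequence is not obviously exact in the middle — the delicate point being whether every $f\colon M\to T$ with $f|_L\in V_L$ lies in $FN+V_M$ — so rather than establish that identity I would bypass it with the dimension squeeze above, which needs only the routine left-exactness of the $V$-sequence, the short-exactness of the $W$-sequence, and the inclusion $V\supseteq W$ already proved.
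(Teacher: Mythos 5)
Your proof is correct, but it takes a genuinely different route from the paper's. The paper's proof hinges on the single observation that
$V = \Hom(M^{\not>\lmb}, T)$,
where $M^{\not>\lmb}$ is the largest quotient of $M$ with all weights $\not>\lmb$: a morphism $f$ kills all $M_\mu$ with $\mu>\lmb$ precisely when it kills the submodule they generate, i.e.\ $\Ker(M\surj M^{\not>\lmb})$. Since $M^{\not>\lmb}\in\catc^\std$ by Proposition \ref{prop:hw-quot}, Theorem \ref{thm:dual-cn} gives $\ch(V)$ directly, and then a short character computation in $H_n$ (using the order-reversal $\mu>\lmb\iff\overline\mu<'\overline\lmb$) shows $\ch\bigl(\Hom(M,T)/V\bigr)=\ch\bigl(\Hom(M,T)^{<'\overline\lmb}\bigr)$. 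Combined with the easy inclusion (your first paragraph, implicit in the paper's character argument since characters have nonnegative coefficients), this gives the isomorphism in a few lines. Your proof establishes the same conclusion but replaces the identification $V = \Hom(M^{\not>\lmb}, T)$ with an induction on the length of a standard filtration, using (i) the easy inclusion $W\subseteq V$, (ii) exactness of $F$ on $\catc^\std$, (iii) additivity of $(-)^{<'\overline\lmb}$ on standard multiplicities for the $W$-side dimension count, and (iv) the dimension squeeze. The squeeze is a nice device to sidestep middle-exactness of the $V$-sequence, which, as you note, is not obvious; your handling of the $W$-side is also fine — you only ever use the dimension additivity coming from Proposition \ref{prop:hw-quot}, not actual exactness of $0\to W_N\to W_M\to W_L\to 0$, so the slightly overstated ``short exact'' claim does no harm. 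What the paper's approach buys is brevity and a conceptual description of $V$ as a Hom-space from a quotient of $M$ (which is then re-used implicitly elsewhere); your approach is more elementary and bypasses that identification entirely, at the cost of an induction and the care needed in the squeeze. In fact, once you observe $V=\Hom(M^{\not>\lmb},T)$, your whole inductive step collapses, since $\ch(V)$ is then computed outright for any $M$ at once.
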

\begin{proof}
It suffices to show that the characters of both sides coincide. 

First note that $V=\Hom(M^{\not> \lmb}, T)$ where $M^{\not> \lmb}$ is the largest quotient of $M$ whose weights are all $\not> \lmb$. 
From Proposition \ref{prop:hw-quot} we see that $M^{\not> \lmb}$ has a standard filtration and, if $\ch(M)=\sum_\mu c_\mu\schub_\mu$, then the number of times $\smod_\mu$ appears in a standard filtration of $M^{\not> \lmb}$ is $c_\mu$ if $\mu \not> \lmb$ and 0 otherwise. 
Thus we see from Theorem \ref{thm:dual-cn} that $\ch(V)=\ch(\Hom(M^{\not> \lmb}, T))=\sum_{\mu \in \Lmb_n, \mu \not> \lmb} c_\mu \schub_{\overline \mu}$. 
We also see from Theorem \ref{thm:dual-cn} that $\Hom(M,T)$ has a
a standard filtration with $\smod_\mu$ appearing $c_{\overline \mu}$ times for each $\mu \in \Lmb_n$. 
Thus $\ch(\Hom(M, T)) = \sum_{\mu \in \Lmb_n} c_\mu \schub_{\overline \mu}$. 
So $\ch(\Hom(M, T)/V) = \sum_{\mu \in \Lmb_n, \mu > \lmb} c_\mu \schub_{\overline \mu}$. 

On the other hand, since $\Hom(M, T)$ has a standard filtration, by Proposition \ref{prop:hw-quot} we see 
$\ch(\Hom(M, T)^{<' \overline{\lmb}})
= \sum_{\mu \in \Lmb_n, \mu <' \overline{\lmb}} c_{\overline \mu} \schub_\mu
 = \sum_{\mu \in \Lmb_n, \overline{\mu} <' \overline{\lmb}} c_\mu \schub_{\overline \mu}
 = \sum_{\mu \in \Lmb_n, \mu > \lmb} c_\mu \schub_{\overline \mu}
 = \ch(\Hom(M, T)/V)$. This shows the claim. 
\end{proof}

Recall from the proof of Theorem \ref{thm:dual-cn} that $T$ has an action of $\borel'$, a copy of $\borel$, 
 defined by $e_{ij}' u_{pq} = \delta_{jq} u_{pi}$,
which commutes with the usual action of $\borel$. 
Recall also that we have identified $u_w$ with $\bigwedge_{(i,j) \in J(w)} u_{i,j} \in T$
where $J(w) = \{(i,\overline{j}) : i<j, w(i)>w(j)\}$. 

We write $\overline{w}=w_0ww_0$ ($w \in S_n$) and $\overline{k}=n+1-k$ ($1 \leq k \leq n$). 
For $w \in S_n$ and $p < q$ let $m_{pq}(w) = \#\{r > q : w(p)<w(r)<w(q)\}$. 
This number is precisely the number of $1 \leq r \leq n$ such that $(q,r) \in J(w)$ while $(p,r) \not\in J(w)$.
So in particular, if $(q,r) \in J(w)$ then $e_{pq}^{m_{pq}(w)} u_w = \const \cdot (u_{pr} \wedge \cdots)$ (it does not matter whether $(p,r) \in J(w)$ or not) and thus $u_{pr} \wedge e_{pq}^{m_{pq}(w)} u_w = 0$. 
Similarly, if $(r,\overline{p}) \in J(w)$ then $u_{r,\overline{q}} \wedge (e'_{\overline{q},\overline{p}})^{m_{\overline{q}, \overline{p}}(\overline{w})} u_w = 0$.

\begin{lem}
Let $w \in S_n$ and $1 \leq i \leq n-1$. For $1 \leq p, p' \leq i$ and $i+1 \leq q, q' \leq n$ such that $\ell(wt_{pq})=\ell(wt_{p'q'})=\ell(w)+1$, 
if $(e'_{\overline{q'},\overline{p'}})^{m_{\overline{q'},\overline{p'}}(\overline{w})}e_{pq}^{m_{pq}(w)} u_w \wedge u_{p,\overline{q'}} \neq 0$ then $w(p) \leq w(p')$ and $w(q) \leq w(q')$. 
Moreover,  $(e'_{\overline{q},\overline{p}})^{m_{\overline{q},\overline{p}}(\overline{w})}e_{pq}^{m_{pq}(w)} u_w \wedge u_{p,\overline{q}}$ is a nonzero multiple of $u_{wt_{pq}}$. 
\label{pqpq}
\end{lem}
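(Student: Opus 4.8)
The plan is to compute the element $E:=(e'_{\overline{q'},\overline{p'}})^{m'}e_{pq}^{m}u_w\wedge u_{p,\overline{q'}}$ explicitly enough to decide when it is nonzero, where I abbreviate $m=m_{pq}(w)$, $m'=m_{\overline{q'},\overline{p'}}(\overline w)$ and $S=\{s:s>q,\ w(p)<w(s)<w(q)\}$ (so $|S|=m$ by the description of $m_{pq}(w)$ recalled before the lemma). Everything lives inside $T$ with its monomial basis $\{u_{a,\overline b}\}$ and $u_w=\bigwedge_{(a,\overline b)\in J(w)}u_{a,\overline b}$. First I would pin down $e_{pq}^{m}u_w$: since $e_{pq}$ acts as a derivation with $e_{pq}u_{a,\overline b}=\delta_{a,q}u_{p,\overline b}$, the only row-$q$ factors $u_{q,\overline s}$ of $u_w$ that it can genuinely move are those with $(q,\overline s)\in J(w)$ and $(p,\overline s)\notin J(w)$, i.e.\ exactly $s\in S$ (hitting any other row-$q$ factor contributes $0$, as the target vector is already present); so applying the full power moves all $m$ of these at once and $e_{pq}^{m}u_w=\const\cdot u^{(1)}$ with nonzero constant, $u^{(1)}$ being $u_w$ with each $u_{q,\overline s}$ $(s\in S)$ replaced by $u_{p,\overline s}$.

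The decisive split is whether $q'\in S$. If $q'\in S$, then $u^{(1)}$ already contains the factor $u_{p,\overline{q'}}$ (the one moved out of $u_{q,\overline{q'}}$); since $e'_{\overline{q'},\overline{p'}}$ only alters column-$\overline{p'}$ entries and never deletes a column-$\overline{q'}$ entry, $u_{p,\overline{q'}}$ survives in $(e'_{\overline{q'},\overline{p'}})^{m'}u^{(1)}$ whenever that is nonzero, so $E=0$; and $q'\in S$ forces $q'>q$ and $w(q')<w(q)$, so the asserted $w(q)\le w(q')$ fails and the implication is vacuous. (Note $q'=q$ never lies in $S$, so this branch does not affect the ``Moreover''.) If $q'\notin S$, then $e_{pq}$ alters neither the column-$\overline{p'}$ nor the column-$\overline{q'}$ entries of $u_w$, so $u^{(1)}$ has exactly the $m'=\#\{a<p':w(p')<w(a)<w(q')\}$ changeable column-$\overline{p'}$ factors (that this count equals $m_{\overline{q'},\overline{p'}}(\overline w)$ is a direct computation via $\overline w(k)=n+1-w(n+1-k)$), and, again by the derivation argument, $(e'_{\overline{q'},\overline{p'}})^{m'}u^{(1)}=\const\cdot u^{(2)}$ with nonzero constant, where $u^{(2)}$ is $u^{(1)}$ with each $u_{a,\overline{p'}}$ $(a<p',\ w(p')<w(a)<w(q'))$ replaced by $u_{a,\overline{q'}}$. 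A short inspection of the factor set of $u^{(2)}$ then shows that $u_{p,\overline{q'}}$ is a factor of $u^{(2)}$ exactly when $w(p)>w(q')$, or else $p<p'$ and $w(p')<w(p)<w(q')$. Hence in this branch $E\ne0$ holds precisely when $w(p)<w(q')$ and it is \emph{not} the case that $p<p'$ and $w(p')<w(p)$.

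Now assuming $E\ne0$ (so $q'\notin S$), I would finish as follows. From the second clause, either $w(p)<w(p')$ (done), or $p\ge p'$ with $w(p)>w(p')$; but the latter, using $p'<p<q'$ (valid since $p\le i<q'$) together with $w(p')<w(p)<w(q')$, exhibits $r=p$ with $p'<r<q'$ and $w(r)$ strictly between $w(p')$ and $w(q')$, contradicting $\ell(wt_{p'q'})=\ell(w)+1$; so $w(p)\le w(p')$. If moreover $w(q)>w(q')$, then $w(p)<w(q')<w(q)$ and, since $q'\ne q$, we cannot have $q'<q$ (that would put $r=q'$ strictly between $p$ and $q$ with $w(q')$ strictly between $w(p)$ and $w(q)$, against $\ell(wt_{pq})=\ell(w)+1$), so $q'>q$; but $q'>q$ with $w(p)<w(q')<w(q)$ means $q'\in S$, against our branch. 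Hence $w(q)\le w(q')$, proving the first assertion. For the ``Moreover'' I take $p'=p$, $q'=q$ (so $q\notin S$): then $E=\const\cdot u^{(2)}\wedge u_{p,\overline q}$ with nonzero constant, and the factor set of $u^{(2)}\wedge u_{p,\overline q}$ coincides with the inversion set $J(wt_{pq})$, because passing from $w$ to $wt_{pq}$ removes exactly $\{(a,\overline p):a<p,\ w(p)<w(a)<w(q)\}$ and $\{(q,\overline b):b>q,\ w(p)<w(b)<w(q)\}$ and inserts exactly $(p,\overline q)$, $\{(p,\overline b):b>q,\ w(p)<w(b)<w(q)\}$ and $\{(a,\overline q):a<p,\ w(p)<w(a)<w(q)\}$ — which is precisely the net change recorded above. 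So $u^{(2)}\wedge u_{p,\overline q}=\pm u_{wt_{pq}}$, a nonzero multiple.

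The only genuine work is the bookkeeping in the middle step: tracking which $u_{a,\overline b}$ are created or destroyed first by $e_{pq}$ and then by $e'_{\overline{q'},\overline{p'}}$, and in particular verifying that when $q'\notin S$ the exponent $m'$ equals the number of factors that actually move, so that $(e'_{\overline{q'},\overline{p'}})^{m'}u^{(1)}$ is a nonzero monomial rather than $0$. Once that is set up, the two inequalities and the ``Moreover'' drop out from the elementary use of the cover conditions $\ell(wt_{pq})=\ell(wt_{p'q'})=\ell(w)+1$ sketched above; I expect no conceptual obstacle, only careful organisation of the cases.
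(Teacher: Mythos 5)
Your proof is correct, and it takes a somewhat more computational route than the paper, though the underlying combinatorics is the same. The paper avoids computing $E:=(e'_{\overline{q'},\overline{p'}})^{m'}e_{pq}^{m}u_w\wedge u_{p,\overline{q'}}$ explicitly; instead it records three targeted ``vanishing observations'' --- if $(p,\overline{p'})\in J(w)$, if $(q,\overline{q'})\in J(w)$, or if $(p,\overline{q'})\in J(w)$ then $E=0$ --- each obtained directly from the preliminary remarks about $u_{pr}\wedge e_{pq}^{m_{pq}(w)}u_w=0$ and its $\borel'$-analogue, together with the fact that the three operations commute. Assuming $E\ne 0$ it then eliminates cases using only those three observations plus the cover conditions. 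You instead compute $e_{pq}^{m}u_w=\const\cdot u^{(1)}$ and, in the branch $q'\notin S$, $(e')^{m'}u^{(1)}=\const\cdot u^{(2)}$ with nonzero constants (the derivation-plus-counting argument you sketch is right: the changeable column-$\overline{p'}$ factors number exactly $m'$ and any term moving an unchangeable one dies by alternation), arriving at an exact criterion for $E\ne 0$ rather than just necessary conditions. The payoff is that your criterion makes the case analysis slightly more mechanical, at the cost of more bookkeeping; the paper's three observations are leaner and more symmetric. In both approaches the finishing step --- playing the inequality $w(p)<w(q')$ against the cover relations $\ell(wt_{pq})=\ell(wt_{p'q'})=\ell(w)+1$ --- is logically identical, and the ``Moreover'' part (the description of $J(wt_{pq})$ as $J(w)$ with the sets $\{p\}\times X$, $Y\times\{\overline q\}$, $\{(p,\overline q)\}$ added and $\{q\}\times X$, $Y\times\{\overline p\}$ removed) matches the paper's verbatim.
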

\begin{proof}
First we note that the operations $e'_{\overline{q'},\overline{p'}}, e_{pq}$ and $(- \wedge u_{p,\overline{q'}})$ on $T$ all commute with each other. We have the following observations. 
\begin{enumerate}
\item If $p<p'$ and $w(p)>w(p')$ then $(p,\overline{p'}) \in J(w)$. 
Thus in this case $(e'_{\overline{q'},\overline{p'}})^{m_{\overline{q'},\overline{p'}}(\overline{w})}e_{pq}^{m_{pq}(w)} u_w \wedge u_{p,\overline{q'}} = 0$
since $(e'_{\overline{q'},\overline{p'}})^{m_{\overline{q'},\overline{p'}}(\overline{w})}u_w \wedge u_{p,\overline{q'}} = 0$. 
\item If $q<q'$ and $w(q)>w(q')$ then $(q,\overline{q'}) \in J(w)$. 
Thus in this case $(e'_{\overline{q'},\overline{p'}})^{m_{\overline{q'},\overline{p'}}(\overline{w})}e_{pq}^{m_{pq}(w)} u_w \wedge u_{p,\overline{q'}} = 0$
since $e_{pq}^{m_{pq}(w)} u_w \wedge u_{p,\overline{q'}} = 0$. 
\item If $w(p)>w(q')$ then $(p,\overline{q'}) \in J(w)$ (note that $p<q'$ holds automatically). 
Thus in this case $(e'_{\overline{q'},\overline{p'}})^{m_{\overline{q'},\overline{p'}}(\overline{w})}e_{pq}^{m_{pq}(w)} u_w \wedge u_{p,\overline{q'}} = 0$
since $u_w \wedge u_{p,\overline{q'}} = 0$. 
\end{enumerate}

Assume $(e'_{\overline{q'},\overline{p'}})^{m_{\overline{q'},\overline{p'}}(\overline{w})}e_{pq}^{m_{pq}(w)} u_w \wedge u_{p,\overline{q'}} \neq 0$. 
First we see that $w(p) < w(q')$ by (3) above. If $w(p') < w(p) < w(q')$ then by $\ell(wt_{p'q'})=\ell(w)+1$ we have $p<p'$, but then it contradicts to (1) above. Thus $w(p) \leq w(p')$. By a similar argument (using (2) instead of (1)) we see $w(q) \leq w(q')$. This shows the first claim.

It can be seen that $J(wt_{pq}) = J(w) \cup (\{p\} \times X) \smallsetminus (\{q\} \times X) \cup (Y \times \{\overline{q}\}) \smallsetminus (Y \times \{\overline{p}\}) \cup \{(p,\overline{q})\}$ where $X=\{\overline{r} : q < r , w(p)<w(r)<w(q)\}$ and $Y=\{r : r < p, w(p)<w(r)<w(q)\} = \{r : \overline{r} > \overline{p}, \overline{w}(\overline{q}) < \overline{w}(\overline{r})<\overline{w}(\overline{p})\}$. This shows the second claim.
\end{proof}

\begin{proof}[Proof of Theorem \ref{thm:tensor-dual}]
First we show that $\mathcal{P}(\smod_{w},\smod_{s_i})$ holds for any $w \in S_n$ and any $1 \leq i \leq n-1$. 

Recall that the isomorphism $\smod_{\overline{w}} \mor \Hom(\smod_w, T)$ was given by $x u_{\overline{w}} \mapsto (v \mapsto x'v)$. 
Thus we want to show that the map
$\phi: \smod_{\overline{w}} \otimes K^{n-i} \mor F(\smod_w \otimes K^i)$
given by $y u_{\overline{w}} \otimes u_q \mapsto (x u_w \otimes u_p \mapsto xy'u_w \wedge u_{pq})$
is a surjection. 

Let $(p_1, q_1), \ldots, (p_r, q_r)$ be all the pairs $(p,q)$ such that $1 \leq p \leq i < q \leq n$ and $\ell(wt_{pq})=\ell(w)+1$, 
ordered by the lexicographic order of $(w(p),w(q))$. 
Let $w^k=wt_{p_kq_k}$. 
Then $\inv(w^1) < \cdots < \inv(w^r)$ and $\inv(\overline{w^1}) >' \cdots >' \inv(\overline{w^r})$. 

For an $x \in S_n$ and $1 \leq p < q \leq n+1$ such that $\ell(xt_{pq})=\ell(x)+1$, let $v_{pq}(x) = e_{pq}^{m_{pq}(x)}u_x \otimes u_p \in \smod_x \otimes K^n$ (note that this definition is also valid for $q=n+1$ since $m_{pq}(x)=0$ in such case). 
Note that $v_{pq}(x)$ has weight $\inv(xt_{pq})$. 
As we showed in \cite[Lemma 3.3]{W2}, $\{v_{pq}(x) : 1 \leq p \leq i < q \leq n+1, \ell(xt_{pq})=\ell(x)+1\}$ generates $\smod_x \otimes K^i$ as a $\borel$-module. 

For $0 \leq k \leq r$, let $U_k$ be the submodule of $\smod_w \otimes K^i$ generated by $v_{p_l, q_l}(w)$ ($l > k$) together with $v_{j,n+1}(w)$ ($1 \leq j \leq i, \ell(wt_{j,n+1})=\ell(w)+1$). Note that $U_0=\smod_w \otimes K^i$. 
From the result of \cite[\S3]{W2} we see that $U_{k-1}/U_k \cong \smod_{w^k}$. 
In particular the weights of $U_{k-1} / U_k$ is all $\leq \inv(w^k)$, and since $\inv(w^1) \leq \cdots \leq \inv(w^k)$ we see that the weights of $(\smod_w \otimes K^i) / U_k$ are all $\leq \inv(w^k)$. 
Moreover, $U_r$ has a filtration by modules $\smod_{wt_{j, n+1}}$, 
and thus $\ch(U_r)=0$ in $H_n$. 
Therefore $(\smod_w \otimes K^i)/U_r \cong (\smod_w \otimes K^i)^{\Lmb_n}$ by Lemma \ref{lem:char-cong} (note that $(\smod_w \otimes K^i)/U_r \in \catc_n$ since $\smod_{w^1}, \ldots, \smod_{w^r} \in \catc_n$). 

Let $V_k$ ($k=1, \ldots, r$) be the submodule of $F(\smod_w \otimes K^i) = \Hom(\smod_w \otimes K^i, T)$ consisting of the homomorphisms which vanish on the $\mu$-weight spaces for any $\mu > \inv(w^k)$. By Lemma \ref{lem:fm-filtr}, $F(\smod_w \otimes K^i)/V_k \cong F(\smod_w \otimes K^i)^{<' \inv(\overline{w^k})}$ ($1 \leq k \leq r$). 
We see $V_r=F(\smod_w \otimes K^i)$ since by the argument above the weights of  $(\smod_w \otimes K^i)^{\Lmb_n}$ are all $\leq \inv(w^r)$. 
We also set $V_0=0$. 

Note that the constituents in a standard filtration of $F(\smod_w \otimes K^i)$ are
$\smod_{\overline{w^1}}, \ldots, \smod_{\overline{w^r}}$ by Theorem \ref{thm:dual-cn}. 
In particular, the only constituent $\smod_x$ with $\inv(\overline{w^{k-1}}) >' \inv(x) \geq' \inv(\overline{w^{k}})$ is $\smod_{\overline{w^k}}$. 
Thus $V_k / V_{k-1} \cong \Ker(F(\smod_w \otimes K^i)^{<' \inv(\overline{w^{k-1}})} \surj F(\smod_w \otimes K^i)^{<' \inv(\overline{w^{k}})}) \cong \smod_{\overline{w^k}}$ by Proposition \ref{prop:hw-quot}. 
In particular any nonzero element of weight $\inv(\overline{w^k})$ in $V_k/V_{k-1}$ generates $V_k / V_{k-1}$. 

We show $\phi(v_{\overline{q_k},\overline{p_k}}(\overline{w})) \in V_k \smallsetminus V_{k-1}$ for each $k$. 
Note that the desired surjectivity of $\phi$ follows from this claim since it shows that $\phi(v_{\overline{q_k},\overline{p_k}}(\overline{w}))+V_{k-1}$ is a cyclic generator of $V_k / V_{k-1}$, i.e. $\ualg(\borel)(\phi(v_{\overline{q_k},\overline{p_k}}(\overline{w}))+V_{k-1}) = V_k$. 

For $1 \leq k,l \leq r$ we have $\phi(v_{\overline{q_k},\overline{p_k}}(\overline{w}))(v_{p_l, q_l}(w)) = (e_{p_lq_l}^{m_{p_lq_l}(w)}(e_{\overline{q_k}, \overline{p_k}}')^{m_{\overline{q_k}, \overline{p_k}}(\overline{w})}u_w) \wedge u_{p_l, \overline{q_k}}$. 
By Lemma \ref{pqpq}, if $\phi(v_{\overline{q_k},\overline{p_k}}(\overline{w}))(v_{p_l, q_l}(w)) \neq 0$ then $w(p_l) \leq w(p_k)$ and $w(q_l) \leq w(q_k)$ and thus in particular $l \leq k$. 
Thus $\phi(v_{\overline{q_k},\overline{p_k}}(\overline{w}))$ induces a map $(\smod_w \otimes K^i)/U_k \mor T$ (note that the elements $v_{j, n+1}(w)$ obviously vanish under $\phi(v_{\overline{q_k}, \overline{p_k}}(\overline{w}))$ since $T$ does not have the corresponding weights). 
Since the weights of $(\smod_w \otimes K^i)/U_k$ are all $\leq \inv(w^k)$, 
this shows $\phi(v_{\overline{q_k},\overline{p_k}}(\overline{w})) \in V_k$. 
Moreover $\phi(v_{\overline{q_k},\overline{p_k}}(\overline{w}))(v_{p_k, q_k}(w)) \neq 0$ by Lemma \ref{pqpq}, and since $v_{p_k, q_k}(w)$ has weight $\inv(w^k)$ this shows $\phi(v_{\overline{q_k},\overline{p_k}}(\overline{w})) \not\in V_{k-1}$. 
Therefore we checked the claim and thus $\mathcal{P}(\smod_w, \smod_{s_i})$ follows. 

Now we can proceed to the general case. 
From (2) of Lemma \ref{lem:tensor-benri} we see that $\mathcal{P}(M, \smod_{s_i})$ holds for any $M$ having a standard filtration. 
Since if $M$ has a standard filtration then $M \otimes \smod_{s_i}$ also has a standard filtration, 
(3) of Lemma \ref{lem:tensor-benri} shows that $\mathcal{P}(M, \smod_{s_i} \otimes \smod_{s_j} \otimes \cdots)$ holds for any $i, j, \ldots$ and any $M$. 
Then from (1) of Lemma \ref{lem:tensor-benri} we see that $\mathcal{P}(M, T(\lmb))$ holds for any $\lmb$ and any $M$, since $T(\lmb)$ is a direct sum component of $\bigotimes_{1 \leq i \leq n-1} (\smod_{s_{n-i}})^{\otimes \overline{\lmb}_i}$.
Thus again from (2) of Lemma \ref{lem:tensor-benri} we get $\mathcal{P}(M, \smod_{\lmb})$, 
since as we showed in \cite{W2} there is an injection $\smod_\lmb \inj T(\lmb)$ such that its cokernel admits a standard filtration. Thus $\mathcal{P}(M, N)$ for general $M, N$ follows by (2) of Lemma \ref{lem:tensor-benri}. 
\end{proof}

\begin{rmk}
As we saw, $(M, N) \mapsto (M \otimes N)^{\Lmb_n}$ is a very fundamental operation in the category $\catc_n^\std$; this in fact defines a structure of symmetric tensor category on $\catc_n^\std$. 
Experimental results suggest an interesting conjecture relating this ``restricted tensor product'' operation and our full tilting module $T$: the dimension of $(T^{\otimes k})^{\Lmb_n}$ seems to be $(k+1)^{\binom{n}{2}}$ for any $k$. Also there is a finer form of this conjecture: the dimension of the degree-$d$ piece (with respect to the grading induced from the natural grading on $T=\bigwedge^\bullet(\cdots)$) of $(T^{\otimes k})^{\Lmb_n}$ seems to be $k^d\binom{\binom{n}{2}}{d}$. 

It can be shown that the latter version of the conjecture implies that $\Hom(T^{\otimes k}, T)$ also has a dimension $(k+1)^{\binom{n}{2}}$. Note that this is true for $k=1$ since $\ch(\End(T)) = \iota(\ch(T))=\sum_{v \in S_n} \schub_v(x) \schub_{w_0v}(1)$ and thus $\dim(\End(T))=\sum_{v \in S_n} \schub_v(1) \schub_{w_0v}(1) = \sum_{v \in S_n} \schub_{v^{-1}}(x) \schub_{(w_0v)^{-1}}(1) = \sum_{v \in S_n} \schub_v(1) \schub_{vw_0}(1) = 2^{\binom{n}{2}}$ by Cauchy formula. 
\end{rmk}

\appendix

\section{Appendix: highest weight categories}
In this appendix we summarize and give proofs for results on highest weight categories used in this paper. Some of them appear in references such as \cite{CPS}, \cite{Rin} and \cite[Appendix]{Don}, but we also give proofs for them here to adapt to our settings because the formulations of highest weight categories and these properties vary with references. Our treatment of tilting objects and Ringel duality mostly follows \cite[Appendix]{Don}, with some minor changes and improvements on the arguments. 

In the following let $\catc$ denote a highest weight category (Definition \ref{defn-hwc}), $\Lmb$ be its weight poset, and $L(\lmb), P(\lmb), Q(\lmb)$ and $\std(\lmb)$ stand for the simple, projective, injective and standard objects respectively. Also, let $\costd(\lmb)$ denote the costandard objects, i.e. $\costd(\lmb)$ is the injective hull of $L(\lmb)$ in $\catc_{\leq \lmb}$. We denote the head and socle of an object $M \in \catc$ by $\hd M$ and $\soc M$ respectively. 

For an order ideal $\Lmb' \subset \Lmb$ we denote by $\catc_{\Lmb'}$ the full subcategory of $\catc$ consisting of the objects such that its simple constituents are $L(\lmb)$ ($\lmb \in \Lmb'$). We denote $\catc_{\leq \lmb}$ etc. to mean $\catc_{\{\mu : \mu \leq \lmb\}}$ etc. For $M \in \catc$ let $M^{\Lmb'}$ be the largest quotient of $M$ which is in $\catc_{\Lmb'}$, and write $M^{\leq \lmb}$ etc. to mean $M^{\{\mu : \mu \leq \lmb\}}$ etc. 

\begin{rmk}
$\Ker(M \surj M^{\Lmb'})$ does not have any $L(\lmb)$ ($\lmb \in \Lmb'$) as its quotient: if $\Ker(M \surj M^{\Lmb'}) / N \cong L(\lmb)$ is such a quotient, then $M/N$ would be a quotient of $M$, its simple constituents are $L(\nu)$ ($\nu \in \Lmb'$), and it is strictly larger than $M^{\Lmb'}$: these contradict to the definition of $M^{\Lmb'}$. 
\label{quot-rem}
\end{rmk}

For an $M \in \catc$ and $\lmb \in \Lmb$ let $(M : L(\lmb))$ denote the number of times $L(\lmb)$ appears in the simple constituents of $M$. It can be easily seen that $\dim \Hom(P(\lmb), M) = (M : L(\lmb)) \dim \Hom(P(\lmb), L(\lmb))$.

\subsection{Basic Facts}
\begin{prop}
There is a surjection $\std(\lmb) \surj L(\lmb)$ such that the simple constituents of the kernel are of the form $L(\mu)$ ($\mu < \lmb$). 
\end{prop}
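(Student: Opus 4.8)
The plan is to deduce this directly from the definition of a highest weight category together with the characterization of $\std(\lmb)$ as the projective cover of $L(\lmb)$ in $\catc_{\leq\lmb}$ (which is noted in the excerpt right after Definition \ref{defn-hwc}). First I would recall that $\std(\lmb)$ lies in $\catc_{\leq\lmb}$: indeed axiom (3) gives a surjection $P(\lmb)\surj\std(\lmb)$ whose kernel is filtered by $\std(\nu)$ with $\nu>\lmb$, so the simple constituents of $\std(\lmb)$ are among those of $P(\lmb)$ lying ``at most at $\lmb$'' in the appropriate sense; more robustly, by Proposition \ref{std-projectivity} $\std(\lmb)$ is the projective cover of $L(\lmb)$ inside $\catc_{\leq\lmb}$, so in particular $\std(\lmb)\in\catc_{\leq\lmb}$, meaning every simple constituent of $\std(\lmb)$ is $L(\mu)$ with $\mu\leq\lmb$.

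Next I would produce the surjection $\std(\lmb)\surj L(\lmb)$. Since $\std(\lmb)$ is a projective cover of $L(\lmb)$ in $\catc_{\leq\lmb}$, there is by definition an essential surjection $\std(\lmb)\surj L(\lmb)$; call its kernel $R$. The simple constituents of $R$ are a sub-multiset of those of $\std(\lmb)$, hence each is of the form $L(\mu)$ with $\mu\leq\lmb$. It remains to rule out $\mu=\lmb$, i.e. to show $(R:L(\lmb))=0$, equivalently $(\std(\lmb):L(\lmb))=1$.

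For that last point I would argue as follows. Using axiom (2), $\End_\catc(\std(\lmb))\cong K$, together with the fact that $\Hom_\catc(\std(\lmb),L(\mu))\neq 0$ only when $L(\mu)=\hd\std(\lmb)=L(\lmb)$ (the head is $L(\lmb)$ since the surjection onto $L(\lmb)$ is essential), one gets $\Hom_\catc(\std(\lmb),\std(\lmb))\cong K$ forces $\dim\Hom_\catc(\std(\lmb),L(\lmb))=1$. Now suppose $L(\lmb)$ occurred as a subquotient of $R$; pick a subobject $R'\subset R$ with $R/R''\cong L(\lmb)$ for a suitable $R''\supset$ something — more cleanly, consider the largest quotient $\std(\lmb)^{<\lmb}$ of $\std(\lmb)$ in $\catc_{<\lmb}$ and compare. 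A multiplicity count: $(\std(\lmb):L(\lmb))=\dim\Hom_\catc(P(\lmb),\std(\lmb))/\dim\Hom_\catc(P(\lmb),L(\lmb))$; but $P(\lmb)\surj\std(\lmb)$ by axiom (3) with kernel filtered by $\std(\nu)$, $\nu>\lmb$, and each such $\std(\nu)\in\catc_{\leq\nu}$ has no $L(\lmb)$ constituent since $\lmb\not\leq\nu$ (as $\nu>\lmb$ and the poset is a genuine partial order). Hence $(\std(\lmb):L(\lmb))=(P(\lmb):L(\lmb))$; and a standard projective-cover argument shows $(P(\lmb):L(\lmb))=1$ would be circular, so instead I use directly that $\std(\lmb)\surj L(\lmb)$ is the projective cover in $\catc_{\leq\lmb}$ and that a projective cover $P$ of a simple $L$ in a finite-length category with $\End(L)=K$ — here $\End_\catc(L(\lmb))=K$ since $\catc$ is a $K$-category with $L(\lmb)$ absolutely simple, which itself follows from axiom (2) applied after the reductions — satisfies $\dim\Hom(P,L)=1$, whence $(\std(\lmb):L(\lmb))=1$.

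The main obstacle, and the step I would be most careful about, is pinning down the multiplicity $(\std(\lmb):L(\lmb))=1$ without circularity: the cleanest route is to invoke that $\std(\lmb)$ is the projective cover of $L(\lmb)$ in $\catc_{\leq\lmb}$ (Proposition \ref{std-projectivity}), note that in any such category $\End_\catc(\std(\lmb))\cong K$ by axiom (2) forces $\hd\std(\lmb)=L(\lmb)$ to be the unique top composition factor isomorphic to $L(\lmb)$, and then the kernel $R$ of $\std(\lmb)\surj\hd\std(\lmb)=L(\lmb)$ automatically has all constituents $L(\mu)$ with $\mu\leq\lmb$ and none equal to $L(\lmb)$ — if it did, projectivity of $\std(\lmb)$ in $\catc_{\leq\lmb}$ would split off a copy of $L(\lmb)$, contradicting that $\std(\lmb)$ is indecomposable (a projective cover is indecomposable). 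That contradiction argument is the heart of the proof; everything else is bookkeeping with order ideals.
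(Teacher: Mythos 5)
Your argument is circular within the paper's logical development. You invoke that $\std(\lmb)$ is the projective cover of $L(\lmb)$ in $\catc_{\leq\lmb}$ (Proposition~\ref{std-projectivity}), but in the appendix that proposition is proved \emph{after} and \emph{from} the present one: its proof cites Proposition~\ref{hom-std-simp} ($\hd\std(\lmb)\cong L(\lmb)$), whose proof in turn needs precisely the surjection $\std(\lmb)\surj L(\lmb)$ being established here to know that $\Hom(\std(\lmb),L(\lmb))$ is nonzero (the exact sequence only bounds it above). The same remark applies to the projectivity of $\std(\lmb)$ in $\catc_{\not>\lmb}$ (Proposition~\ref{ext-std-simp}), which also routes through~\ref{hom-std-simp}. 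So you cannot take the projective-cover characterization as a starting point; it is one of the downstream consequences.

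The paper instead works directly from axioms~(1)--(3). It first shows $(\std(\lmb):L(\mu))\neq0\Rightarrow\mu\leq\lmb$ by noting $(\std(\lmb):L(\mu))\neq 0$ means $\Hom(P(\mu),\std(\lmb))\neq0$, using axiom~(3) to filter $P(\mu)$ by $\std(\nu)$ with $\nu\geq\mu$, and then applying axiom~(1). It then gets $\Hom(P(\lmb),\std(\lmb))\cong\End(\std(\lmb))\cong K$ from axioms~(2) and~(3) (the kernel of $P(\lmb)\surj\std(\lmb)$ maps trivially to $\std(\lmb)$ by axiom~(1)), which yields \emph{both} $(\std(\lmb):L(\lmb))=1$ \emph{and} $\dim\Hom(P(\lmb),L(\lmb))=1$ simultaneously, without assuming $\End(L(\lmb))\cong K$ beforehand. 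Finally it lifts a surjection $N\surj L(\lmb)$ from a subobject $N\subset\std(\lmb)$ through $P(\lmb)$ and uses one-dimensionality of $\Hom(P(\lmb),\std(\lmb))$ to force $N=\std(\lmb)$. Separately, your ``projectivity would split off a copy of $L(\lmb)$'' step is also not a valid deduction: a composition factor of $R$ need not be a quotient or subobject of $R$, and projectivity gives no splitting for mere subquotients. If you want a non-circular multiplicity count, you must run the $\Hom(P(\lmb),-)$ dimension argument above rather than appeal to indecomposability.
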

\begin{proof}
First we show that $(\std(\lmb) : L(\mu)) \neq 0$ implies $\mu \leq \lmb$. Assume $(\std(\lmb) : L(\mu)) \neq 0$. 
This means $\Hom(P(\mu), \std(\lmb)) \neq 0$. Since $P(\mu)$ has a filtration by $\std(\nu)$ ($\nu \geq \mu$) it follows that $\Hom(\std(\nu), \std(\lmb)) \neq 0$ for some $\nu \geq \mu$. Thus $\mu \leq \nu \leq \lmb$. 

Next we see $(\std(\lmb) : L(\lmb)) = 1$. 
Since $\Ker(P(\lmb) \surj \std(\lmb))$ has a filtration by $\std(\nu)$ ($\nu > \lmb$)
we see that $\Hom(\Ker(P(\lmb) \surj \std(\lmb)), \std(\lmb)) = 0$. 
Thus we have an exact sequence $0 \mor \Hom(\std(\lmb), \std(\lmb)) \mor \Hom(P(\lmb), \std(\lmb)) \mor \Hom(\Ker(P(\lmb) \surj \std(\lmb)), \std(\lmb)) = 0$ and thus $\Hom(P(\lmb), \std(\lmb)) \cong \End(\std(\lmb)) \cong K$. 
Therefore $(\std(\lmb) : L(\lmb)) = 1$. 

Finally we show that $L(\lmb)$ is a quotient of $\std(\lmb)$. Since $(\std(\lmb) : L(\lmb)) = 1$, there exists an $N \subset \std(\lmb)$ and a surjection $f : N \surj L(\lmb)$. By the projectivity of $P(\lmb)$, the surjection $\pi : P(\lmb) \surj L(\lmb)$ factors as $\pi = fg$ for some $g : P(\lmb) \mor N$. The composition $P(\lmb) \mor N \inj \std(\lmb)$ is nonzero and thus must be a nonzero multiple of the surjection $P(\lmb) \surj \std(\lmb)$ since as we saw above $\Hom(P(\lmb), \std(\lmb)) \cong K$. But the image of the composition map above is $N$, so we get $N = \std(\lmb)$. Thus the claim follows. 
\end{proof}

By the proposition above $\std(\lmb) \in \catc_{\Lmb'}$ for any order ideal $\Lmb'$ containing $\lmb$. Also from the proof we see $\Hom(P(\lmb), L(\lmb)) \cong K$. 

\begin{prop}
$\Hom(\std(\lmb), L(\mu)) = 0$ for $\mu \neq \lmb$ and $\Hom(\std(\lmb), L(\lmb)) \cong K$. Thus in particular $\hd \std(\lmb) \cong L(\lmb)$. 
\label{hom-std-simp}
\end{prop}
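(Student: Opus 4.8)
The plan is to reduce everything to the two facts just established above --- the surjection $\std(\lmb) \surj L(\lmb)$ whose kernel has constituents $L(\mu)$ ($\mu < \lmb$), and the identity $(\std(\lmb):L(\lmb)) = 1$ together with $\Hom(P(\lmb),\std(\lmb)) \cong K$ --- and to exploit the projectivity of $P(\lmb)$ via the displayed formula $\dim\Hom(P(\lmb), M) = (M:L(\lmb))\,\dim\Hom(P(\lmb),L(\lmb))$.

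First I would dispose of the case $\mu \neq \lmb$. Suppose $f \colon \std(\lmb) \mor L(\mu)$ is nonzero; since $L(\mu)$ is simple, $f$ is surjective, so the composite $P(\lmb) \surj \std(\lmb) \xrightarrow{f} L(\mu)$ is again surjective, in particular nonzero. But applying the displayed formula to $M = L(\mu)$ gives $\dim\Hom(P(\lmb), L(\mu)) = (L(\mu):L(\lmb))\,\dim\Hom(P(\lmb),L(\lmb)) = 0$, a contradiction. Hence $\Hom(\std(\lmb), L(\mu)) = 0$.

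For $\mu = \lmb$, the surjection $\std(\lmb) \surj L(\lmb)$ of the previous proposition shows $\Hom(\std(\lmb), L(\lmb)) \neq 0$. On the other hand, precomposition with the surjection $P(\lmb) \surj \std(\lmb)$ yields an injection $\Hom(\std(\lmb), L(\lmb)) \inj \Hom(P(\lmb), L(\lmb))$ (injective because $P(\lmb) \surj \std(\lmb)$ is surjective), and the target is one-dimensional: indeed $(\std(\lmb):L(\lmb)) = 1$ and $\Hom(P(\lmb),\std(\lmb)) \cong K$ force $\dim\Hom(P(\lmb),L(\lmb)) = 1$ via the displayed formula. Therefore $\Hom(\std(\lmb), L(\lmb)) \cong K$.

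Finally, for the head statement, every morphism from $\std(\lmb)$ to a semisimple object factors uniquely through $\hd\std(\lmb)$, so writing $\hd\std(\lmb) \cong \bigoplus_\nu L(\nu)^{\oplus a_\nu}$ one gets $\dim\Hom(\std(\lmb), L(\nu)) = a_\nu \dim\End(L(\nu))$; the two computations above then give $a_\nu = 0$ for $\nu \neq \lmb$ and $a_\lmb \dim\End(L(\lmb)) = 1$, whence $\hd\std(\lmb) \cong L(\lmb)$ (and $\End(L(\lmb)) \cong K$ as a byproduct). There is no degeneracy issue since $\std(\lmb)$ surjects onto $L(\lmb) \neq 0$. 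I do not expect a genuine obstacle here; the only point needing a little care is the injectivity of the precomposition map, which is immediate from the surjectivity of $P(\lmb) \surj \std(\lmb)$.
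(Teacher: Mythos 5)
Your proposal is correct and takes essentially the same route as the paper: the key point in both is that the surjection $P(\lmb) \surj \std(\lmb)$ induces an injection $\Hom(\std(\lmb), L(\mu)) \inj \Hom(P(\lmb), L(\mu))$, and the target is $0$ for $\mu \neq \lmb$ and $K$ for $\mu = \lmb$ (the paper records the latter directly after the previous proposition, whereas you re-derive it from the multiplicity formula; either is fine). The paper's write-up is more compact, phrasing everything via a single left-exact sequence, but there is no substantive difference.
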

\begin{proof}
This can be easily seen from the exact sequence $0 \mor \Hom(\std(\lmb), L(\mu)) \mor \Hom(P(\lmb), L(\mu))$ since the last term is $0$ for $\mu \neq \lmb$ and $K$ for $\mu = \lmb$. 
\end{proof}

\subsection{Projectivities of Standard objects}
\begin{prop}
$\Ext^1(\std(\lmb), L(\mu)) \neq 0$ implies $\lmb < \mu$. So $\std(\lmb)$ is projective in $\catc_{\Lmb'}$ for any order ideal $\Lmb'$ which contains $\lmb$ as a maximal element. 
\label{ext-std-simp}
\end{prop}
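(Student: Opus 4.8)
The plan is to prove the two assertions of Proposition \ref{ext-std-simp} in sequence: first the vanishing statement $\Ext^1(\std(\lmb), L(\mu)) \neq 0 \implies \lmb < \mu$, and then deduce the projectivity of $\std(\lmb)$ in $\catc_{\Lmb'}$ from it. For the first part I would start from the defining surjection $P(\lmb) \surj \std(\lmb)$ of axiom (3), whose kernel $K_\lmb := \Ker(P(\lmb) \surj \std(\lmb))$ has a filtration by standard objects $\std(\nu)$ with $\nu > \lmb$. The long exact sequence for $\Hom(-, L(\mu))$ attached to $0 \mor K_\lmb \mor P(\lmb) \mor \std(\lmb) \mor 0$ gives
\[
\Hom(K_\lmb, L(\mu)) \mor \Ext^1(\std(\lmb), L(\mu)) \mor \Ext^1(P(\lmb), L(\mu)) = 0,
\]
the last term vanishing by projectivity of $P(\lmb)$. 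Hence $\Ext^1(\std(\lmb), L(\mu))$ is a quotient of $\Hom(K_\lmb, L(\mu))$, so it suffices to show that $\Hom(K_\lmb, L(\mu)) \neq 0$ forces $\lmb < \mu$. Since $K_\lmb$ has a standard filtration with layers $\std(\nu)$, $\nu > \lmb$, a nonzero map $K_\lmb \mor L(\mu)$ restricts nontrivially to some layer, giving a nonzero map $\std(\nu) \mor L(\mu)$ for some $\nu > \lmb$ (more precisely, using $\hd\std(\nu) \cong L(\nu)$ from Proposition \ref{hom-std-simp}, the surjection onto $L(\mu)$ factors through a composition factor of $K_\lmb$, all of which are $L(\nu)$ with $\nu > \lmb$; even more directly, any simple quotient of $K_\lmb$ is among these $L(\nu)$). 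Either way $\mu = \nu > \lmb$.

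For the projectivity statement, let $\Lmb'$ be an order ideal containing $\lmb$ as a maximal element. Projectivity of $\std(\lmb)$ in $\catc_{\Lmb'}$ is equivalent to $\Ext^1_{\catc_{\Lmb'}}(\std(\lmb), N) = 0$ for all $N \in \catc_{\Lmb'}$; since $\catc_{\Lmb'}$ is a Serre subcategory (closed under subobjects, quotients, extensions), $\Ext^1$ computed in $\catc_{\Lmb'}$ agrees with $\Ext^1$ computed in $\catc$ for objects of $\catc_{\Lmb'}$, so it is enough to show $\Ext^1_\catc(\std(\lmb), N) = 0$ for all $N \in \catc_{\Lmb'}$. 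By dévissage along a composition series of $N$ and the long exact sequence, this reduces to showing $\Ext^1_\catc(\std(\lmb), L(\mu)) = 0$ for every $\mu \in \Lmb'$. But by the first part any nonvanishing would force $\lmb < \mu$ with $\mu \in \Lmb'$, contradicting the maximality of $\lmb$ in $\Lmb'$. This completes the argument.

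The only real subtlety — and the step I expect to be the main obstacle to state cleanly — is the claim that a nonzero homomorphism from a standardly filtered object $K_\lmb$ to a simple $L(\mu)$ forces $L(\mu)$ to occur as a composition factor of $K_\lmb$ with the appropriate index constraint. This is immediate (a nonzero map to a simple is surjective, so $L(\mu)$ is a simple quotient, hence a composition factor), and the composition factors of a standardly filtered object with layers $\std(\nu)$ ($\nu > \lmb$) are, by the first proposition of this subsection (composition factors of $\std(\nu)$ are $L(\eta)$ with $\eta \leq \nu$), of the form $L(\eta)$ with $\eta \leq \nu$ for some $\nu > \lmb$; in particular $\eta \leq \nu$ does not by itself give $\eta > \lmb$. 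So one must be slightly more careful: rather than using composition factors of $K_\lmb$ loosely, one should use that a surjection $K_\lmb \surj L(\mu)$ induces a surjection from the top layer issue — better, run the $\Hom(-,L(\mu))$ long exact sequence along the standard filtration $0 = F_0 \subset F_1 \subset \cdots \subset F_m = K_\lmb$ with $F_i/F_{i-1} \cong \std(\nu_i)$, $\nu_i > \lmb$: if $\Hom(K_\lmb, L(\mu)) \neq 0$ then some $\Hom(\std(\nu_i), L(\mu)) \neq 0$, and by Proposition \ref{hom-std-simp} this gives $\mu = \nu_i > \lmb$ exactly. That is the clean path, and it is the one I would write out.
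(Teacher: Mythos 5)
Your proof is correct and follows essentially the same route as the paper: push $\Ext^1(\std(\lmb), L(\mu))$ back through the short exact sequence $0 \mor \Ker(P(\lmb) \surj \std(\lmb)) \mor P(\lmb) \mor \std(\lmb) \mor 0$ to reduce to $\Hom$ from the standardly filtered kernel, then locate a nonzero $\Hom(\std(\nu), L(\mu))$ for some $\nu > \lmb$ and apply Proposition \ref{hom-std-simp}. The ``clean path'' you settled on in your last paragraph (running the filtration $0 = F_0 \subset \cdots \subset F_m = K_\lmb$ and extracting a nonzero $\Hom(\std(\nu_i), L(\mu))$, rather than arguing via arbitrary composition factors of $K_\lmb$, which would only give $\eta \leq \nu$) is exactly the paper's argument, and your deduction of projectivity in $\catc_{\Lmb'}$ from it is the intended one.
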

Because the simple constituents of $\std(\mu)$ are $L(\nu)$ ($\nu \leq \mu$) we get as a corollary: 
\begin{cor}
$\Ext^1(\std(\lmb), \std(\mu)) \neq 0$ implies $\lmb < \mu$. \hfill $\Box$
\label{ext-std-std}
\end{cor}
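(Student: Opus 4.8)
The plan is to extract the required $\Ext$-vanishing from the defining short exact sequence of $\std(\lmb)$ by a d\'evissage. Fix the surjection $P(\lmb) \surj \std(\lmb)$ furnished by axiom (3) of Definition \ref{defn-hwc}, and set $K = \Ker(P(\lmb) \surj \std(\lmb))$, so that $K$ has a filtration whose subquotients are the $\std(\nu)$ with $\nu > \lmb$. Applying $\Hom(-, L(\mu))$ to $0 \mor K \mor P(\lmb) \mor \std(\lmb) \mor 0$ and using $\Ext^1(P(\lmb), L(\mu)) = 0$, I get that $\Ext^1(\std(\lmb), L(\mu))$ is a quotient of $\Hom(K, L(\mu))$; hence it suffices to show $\Hom(K, L(\mu)) = 0$ unless $\lmb < \mu$.

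Next I would run the d\'evissage on $K$ itself: along each step of its standard filtration, left-exactness of $\Hom(-, L(\mu))$ shows that a nonzero map $K \mor L(\mu)$ forces a nonzero map $\std(\nu) \mor L(\mu)$ for some filtration subquotient $\std(\nu)$, necessarily with $\nu > \lmb$. By Proposition \ref{hom-std-simp} such a nonzero map exists only when $\mu = \nu$, so $\mu = \nu > \lmb$. Combining the two steps gives the first assertion, $\Ext^1(\std(\lmb), L(\mu)) \neq 0 \Rightarrow \lmb < \mu$.

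For the projectivity claim, recall that $\std(\lmb)$ already lies in $\catc_{\Lmb'}$ for any order ideal $\Lmb' \ni \lmb$. When $\lmb$ is a maximal element of $\Lmb'$, no $\mu \in \Lmb'$ satisfies $\lmb < \mu$, so the first part gives $\Ext^1_\catc(\std(\lmb), L(\mu)) = 0$ for all $\mu \in \Lmb'$. Since $\catc_{\Lmb'}$ is closed under subobjects, quotients and extensions in $\catc$, one has $\Ext^1_{\catc_{\Lmb'}}(\std(\lmb), -) = \Ext^1_\catc(\std(\lmb), -)$ on $\catc_{\Lmb'}$; a short induction on composition length (using $0 \mor M' \mor M \mor L(\mu) \mor 0$ with $\mu \in \Lmb'$) then yields $\Ext^1_{\catc_{\Lmb'}}(\std(\lmb), M) = 0$ for every $M \in \catc_{\Lmb'}$, i.e.\ $\std(\lmb)$ is projective in $\catc_{\Lmb'}$. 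The Corollary follows the same way: a composition series of $\std(\mu)$ has subquotients $L(\nu)$ with $\nu \leq \mu$, so $\Ext^1(\std(\lmb), \std(\mu)) \neq 0$ forces $\Ext^1(\std(\lmb), L(\nu)) \neq 0$ for some $\nu \leq \mu$, whence $\lmb < \nu \leq \mu$.

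There is no serious obstacle here; the argument is a routine d\'evissage. The only points I would state carefully are the two long-exact-sequence inductions --- namely that a nonzero $\Hom$ out of a filtered object detects a nonzero $\Hom$ out of one of its subquotients, and likewise for the vanishing of $\Ext^1$ --- together with the remark that $\Ext^1$ may be computed inside the Serre subcategory $\catc_{\Lmb'}$ without change.
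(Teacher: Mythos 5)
Your argument is correct and follows the same route as the paper: you re-derive Proposition \ref{ext-std-simp} (i.e.\ $\Ext^1(\std(\lmb),L(\mu))\neq 0 \Rightarrow \lmb<\mu$) from the short exact sequence $0\to K\to P(\lmb)\to\std(\lmb)\to 0$ and the filtration of $K$ by $\std(\nu)$ with $\nu>\lmb$, then pass from simple constituents of $\std(\mu)$ to the corollary by a second d\'evissage. This is exactly the paper's proof (the paper states the corollary as an immediate consequence of Proposition \ref{ext-std-simp} via the observation that the simple constituents of $\std(\mu)$ are $L(\nu)$ with $\nu\leq\mu$).
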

\begin{proof}[Proof of the Proposition \ref{ext-std-simp}]
Assume $\Ext^1(\std(\lmb), L(\mu)) \neq 0$. Let $M = \Ker(P(\lmb) \surj \std(\lmb))$, so $M$ has a filtration by $\std(\nu)$ ($\nu > \lmb$). 
By the exact sequence $\Hom(M, L(\mu)) \mor \Ext^1(\std(\lmb), L(\mu)) \mor \Ext^1(P(\lmb), L(\mu))=0$ we see that $\Hom(M, L(\mu)) \neq 0$. This implies that $\Hom(\std(\nu), L(\mu)) \neq 0$ for some $\nu > \lmb$. So $\mu = \nu > \lmb$ by Proposition \ref{hom-std-simp}. 
\end{proof}

Since $\hd \std(\lmb) \cong L(\lmb)$ by Proposition \ref{hom-std-simp} we get:
\begin{prop}
Let $\Lmb' \subset \Lmb$ be an order ideal and let $\lmb \in \Lmb'$ be a maximal element. Then $\std(\lmb) \surj L(\lmb)$ is a projective cover in $\catc_{\Lmb'}$ (so $\std(\lmb) \cong P(\lmb)^{\Lmb'}$). \hfill $\Box$
\label{std-projectivity}
\end{prop}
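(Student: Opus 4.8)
The final statement to prove is Proposition \ref{std-projectivity}: for an order ideal $\Lmb' \subset \Lmb$ and a maximal element $\lmb \in \Lmb'$, the surjection $\std(\lmb) \surj L(\lmb)$ is a projective cover in $\catc_{\Lmb'}$, so $\std(\lmb) \cong P(\lmb)^{\Lmb'}$.

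The proof is essentially a bookkeeping assembly of the two facts just established, together with the general nonsense that a projective cover in an abelian category with enough projectives is the quotient of the ambient projective cover by the submodule it generates. Let me sketch the plan.

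First I would observe that $\std(\lmb) \in \catc_{\Lmb'}$: by the Proposition preceding \ref{ext-std-simp} the simple constituents of $\std(\lmb)$ are of the form $L(\mu)$ with $\mu \leq \lmb$, and since $\Lmb'$ is an order ideal containing $\lmb$ all such $\mu$ lie in $\Lmb'$; this was already remarked just after that proposition. Next, by Proposition \ref{ext-std-simp}, since $\lmb$ is maximal in $\Lmb'$ there is no $\mu \in \Lmb'$ with $\lmb < \mu$, hence $\Ext^1_{\catc}(\std(\lmb), L(\mu)) = 0$ for every $\mu \in \Lmb'$. Because every object of $\catc_{\Lmb'}$ has a finite filtration by such simples $L(\mu)$, a standard induction on length gives $\Ext^1_{\catc}(\std(\lmb), X) = 0$ for every $X \in \catc_{\Lmb'}$; one should note here that $\Ext^1$ computed in the full subcategory $\catc_{\Lmb'}$ agrees with $\Ext^1$ computed in $\catc$ for objects of $\catc_{\Lmb'}$, since $\catc_{\Lmb'}$ is a Serre subcategory closed under extensions. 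Therefore $\std(\lmb)$ is projective in $\catc_{\Lmb'}$.

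It remains to see that $\std(\lmb) \surj L(\lmb)$ is a \emph{projective cover}, i.e. an essential epimorphism. This follows because $\hd \std(\lmb) \cong L(\lmb)$ by Proposition \ref{hom-std-simp}: any proper submodule of $\std(\lmb)$ is contained in the radical, so the surjection onto $L(\lmb)$ is essential. Combining, $\std(\lmb)$ is a projective object in $\catc_{\Lmb'}$ together with an essential surjection onto $L(\lmb)$, which is exactly the definition of the projective cover $P(\lmb)^{\Lmb'}$ of $L(\lmb)$ in $\catc_{\Lmb'}$; projective covers are unique up to isomorphism, so $\std(\lmb) \cong P(\lmb)^{\Lmb'}$. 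For the identification with $P(\lmb)^{\Lmb'}$ specifically (the largest quotient of the ambient projective cover $P(\lmb)$ lying in $\catc_{\Lmb'}$), one uses that applying $(-)^{\Lmb'}$ to the surjection $P(\lmb) \surj L(\lmb)$ yields a projective cover in $\catc_{\Lmb'}$, and by uniqueness this must be $\std(\lmb) \surj L(\lmb)$.

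I do not anticipate a genuine obstacle here; the only point requiring a little care is the compatibility of $\Ext^1$ in $\catc_{\Lmb'}$ with $\Ext^1$ in $\catc$, and the length induction upgrading vanishing of $\Ext^1(\std(\lmb),-)$ from simples to all objects of $\catc_{\Lmb'}$ — both are routine, but should be stated rather than silently used. Everything else is a direct citation of Propositions \ref{hom-std-simp} and \ref{ext-std-simp} plus the elementary characterization of projective covers via essential epimorphisms.
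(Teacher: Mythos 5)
Your proof is correct and follows exactly the route the paper intends: Proposition \ref{ext-std-simp} already asserts the projectivity of $\std(\lmb)$ in $\catc_{\Lmb'}$, and the paper explicitly cites Proposition \ref{hom-std-simp} ($\hd\std(\lmb)\cong L(\lmb)$) immediately before stating \ref{std-projectivity} with no further proof. Your added remarks on agreement of $\Ext^1$ across the Serre subcategory and the length induction from simples are sound but are precisely the routine points the paper treats as already absorbed into Proposition \ref{ext-std-simp}.
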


\subsection{Hom and Ext between Standard and Costandard Objects}

\begin{prop}
$\Ext^i(\std(\lmb), \costd(\mu)) \cong K$ iff $\lmb = \mu$ and $i=0$, and otherwise $0$. 
\label{std-costd-ext}
\end{prop}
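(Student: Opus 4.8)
The plan is to compute $\Ext^i(\std(\lmb),\costd(\mu))$ by using the fact that $\costd(\mu)$ is the injective hull of $L(\mu)$ inside $\catc_{\leq\mu}$, together with a dimension shift / inductive argument on the poset. First I would record the vanishing of $\Hom$: since $\hd\std(\lmb)\cong L(\lmb)$ (Proposition \ref{hom-std-simp}) and the simple constituents of $\costd(\mu)$ are $L(\nu)$ with $\nu\leq\mu$, any nonzero map $\std(\lmb)\mor\costd(\mu)$ forces $L(\lmb)\cong\soc(\text{image})$ to sit inside $\soc\costd(\mu)\cong L(\mu)$, hence $\lmb=\mu$; and in that case, since $(\std(\lmb):L(\lmb))=1=(\costd(\lmb):L(\lmb))$, one checks $\Hom(\std(\lmb),\costd(\lmb))\cong K$, the map being the composition $\std(\lmb)\surj L(\lmb)\inj\costd(\lmb)$.

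For the higher $\Ext$'s I would argue by downward induction on $\mu$ with respect to the partial order (the base case being $\mu$ maximal, where $\catc_{\leq\mu}$-injectivity of $\costd(\mu)$ in a larger subcategory can be leveraged). The key step is: take the injective hull $\costd(\mu)\inj Q(\mu)$ in the whole category $\catc$ — no, better, I would instead resolve $\std(\lmb)$. Let $M=\Ker(P(\lmb)\surj\std(\lmb))$, which has a filtration by $\std(\nu)$ with $\nu>\lmb$. The long exact sequence applied to $0\mor M\mor P(\lmb)\mor\std(\lmb)\mor 0$ gives, for $i\geq 1$,
\[
\Ext^i(\std(\lmb),\costd(\mu))\cong\Ext^{i-1}(M,\costd(\mu)),
\]
using $\Ext^{\geq 1}(P(\lmb),-)=0$ and (for $i=1$) the surjectivity of $\Hom(P(\lmb),\costd(\mu))\mor\Hom(M,\costd(\mu))$, which itself needs the $i=0$ computation applied to the standard filtration of $M$ (all constituents $\std(\nu)$ with $\nu>\lmb$, so $\Hom(M,\costd(\mu))=0$ unless some $\nu>\lmb$ equals $\mu$). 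Then I would induct on $i$: filtering $M$ by standard objects $\std(\nu)$ ($\nu>\lmb$) and using the inductive hypothesis on the strictly smaller $\Ext$-degree, one reduces $\Ext^{i-1}(M,\costd(\mu))$ to a sum of $\Ext^{i-1}(\std(\nu),\costd(\mu))$ with $\nu>\lmb$, which vanish for $i-1\geq 1$; and the $i=1$ case reduces to $\Hom(\std(\nu),\costd(\mu))$ with $\nu>\lmb\leq\cdots$, forcing $\nu=\mu$, but $\nu>\lmb$ and the well-definedness of the connecting map have to be reconciled — this is where care is needed.

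The main obstacle I anticipate is bookkeeping the induction cleanly: one is simultaneously inducting on the cohomological degree $i$ and needs the statement for all pairs, so I would set it up as "for all $\lmb,\mu$ and all $i\geq 1$, $\Ext^i(\std(\lmb),\costd(\mu))=0$" and prove it by induction on $i$, with the $i=1$ case handled directly via the $\Hom$-computation above and the inductive step via the dimension shift $\Ext^i(\std(\lmb),\costd(\mu))\cong\Ext^{i-1}(M,\costd(\mu))$ combined with the standard filtration of $M$ and additivity of $\Ext^{i-1}(-,\costd(\mu))$ in the first variable along that filtration. The dual approach — injectively coresolving $\costd(\mu)$ in $\catc_{\leq\mu}$ and exploiting that $\std(\lmb)$ is projective in $\catc_{\Lmb'}$ for suitable order ideals $\Lmb'$ (Proposition \ref{std-projectivity}) — may give a shorter argument and I would present whichever comes out cleaner, but the projective-resolution version above is the one I would try first.
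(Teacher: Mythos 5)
Your $i=0$ computation is essentially right (modulo a slip: the image of a nonzero map $\std(\lmb)\mor\costd(\mu)$ has $L(\lmb)$ as its \emph{head}, not its socle; the two-sided conclusion $\lmb=\mu$ comes from using $\hd\std(\lmb)\cong L(\lmb)$ on one side and $\soc\costd(\mu)\cong L(\mu)$ on the other), and your $i\geq 2$ dimension shift via $0\mor M\mor P(\lmb)\mor\std(\lmb)\mor 0$ with downward induction is exactly what the paper does.

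The genuine gap, which you flag yourself as ``where care is needed,'' is the $i=1$ case with $\mu>\lmb$. The long exact sequence only identifies $\Ext^1(\std(\lmb),\costd(\mu))$ with the \emph{cokernel} of $\Hom(P(\lmb),\costd(\mu))\mor\Hom(M,\costd(\mu))$, and your plan to compute the right-hand side by breaking $M$ along its standard filtration is circular: the equality $\dim\Hom(M,\costd(\mu))=\sum_\nu(M:\std(\nu))\cdot\dim\Hom(\std(\nu),\costd(\mu))$ requires precisely the $\Ext^1(\std(\nu),\costd(\mu))=0$ you are trying to prove, and even if you had the dimension count (e.g.\ by a simultaneous downward induction on $\lmb$), you would still owe an argument that the restriction map is surjective, which is again tantamount to the vanishing you want. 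The argument does close for $\mu\not>\lmb$, since there every $\Hom(\std(\nu),\costd(\mu))$ with $\nu>\lmb$ vanishes for degree-0 reasons and left-exactness forces $\Hom(M,\costd(\mu))=0$; but the remaining case $\mu>\lmb$ needs a different input.

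The right input is the one you mention in passing at the end: work inside truncated subcategories. For $\lmb\leq\mu$ both objects live in $\catc_{\leq\mu}$, which is closed under extensions so that $\Ext^1_\catc=\Ext^1_{\catc_{\leq\mu}}$, and $\costd(\mu)$ is injective there. For $\lmb\not\leq\mu$ one instead observes that $\costd(\mu)\in\catc_{\not>\lmb}$ (its constituents $L(\nu)$ have $\nu\leq\mu$, and $\nu>\lmb$ would force $\mu>\lmb$) and that $\std(\lmb)$ is projective in $\catc_{\not>\lmb}$ by Proposition \ref{std-projectivity}, since $\lmb$ is maximal there. This two-case subcategory argument is exactly the paper's $i=1$ step and is what you should substitute for the dimension-shift attempt; once $i=1$ is in hand, your $i\geq 2$ induction goes through unchanged.
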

\begin{proof}
We have an exact sequence $0 \mor \Hom(L(\lmb),\costd(\lmb)) \mor \Hom(\std(\lmb), \costd(\lmb)) \mor \Hom(\Ker(\std(\lmb) \surj L(\lmb)), \costd(\lmb))$. 
Here the simple constituents of the kernel are $L(\nu)$ ($\nu < \lmb$), and $\Hom(L(\nu), \costd(\lmb)) = 0$ for $\nu < \lmb$ since $\soc \costd(\lmb) \cong L(\lmb)$. Thus the last term of the sequence above vanishes. 
Also, $\Hom(L(\lmb), \costd(\lmb)) \cong \End(L(\lmb)) \cong \Hom(P(\lmb), L(\lmb)) \cong K$ since $\soc \costd(\lmb) \cong L(\lmb)$ and $\hd P(\lmb) \cong L(\lmb)$. Thus $\Hom(\std(\lmb), \costd(\lmb)) \cong K$. 

We show the vanishings of the other extensions. 
\begin{itemize}
\item $i=0$: $\Hom(\std(\lmb), \costd(\mu)) \neq 0$ implies that $\Hom(L(\nu), \costd(\mu)) \neq 0$ for some $\nu \leq \lmb$. But since $\soc \costd(\mu) \cong L(\mu)$ this means that $\mu = \nu \leq \lmb$. Thus $\Hom(\std(\lmb), \costd(\mu)) \neq 0$ implies $\mu \leq \lmb$. 
By the same argument (using $\hd \std(\lmb) \cong L(\lmb)$ instead of $\soc \costd(\mu) \cong L(\mu)$) we see that $\Hom(\std(\lmb), \costd(\mu)) \neq 0$ also implies $\mu \geq \lmb$. Thus $\Hom(\std(\lmb), \costd(\mu)) \neq 0$ implies $\lmb=\mu$. 

\item $i=1$: Note that $\Ext^1 = \Ext^1_{\catc_{\Lmb'}}$ for any $\Lmb'$ since $\catc_{\Lmb'}$ is closed under extensions. If $\lmb \leq \mu$ then $\std(\lmb) \in \catc_{\leq \mu}$
and thus $\Ext^1(\std(\lmb), \costd(\mu)) = \Ext^1_{\catc_{\leq \mu}}(\std(\lmb),\costd(\mu)) = 0$ by the injectivity of $\costd(\mu) \in \catc_{\leq \mu}$. 
Otherwise $\costd(\mu) \in \catc_{\not> \lmb}$
and thus $\Ext^1(\std(\lmb), \costd(\mu)) = \Ext^1_{\catc_{\not> \lmb}}(\std(\lmb), \costd(\mu)) = 0$ by the projectivity of $\std(\lmb) \in \catc_{\not> \lmb}$. 
\item $i \geq 2$ : Follows from the exact sequence $0 = \Ext^{i-1}(P(\lmb), \costd(\mu)) \mor \Ext^{i-1}(\Ker(P(\lmb) \surj \std(\lmb)), \costd(\mu)) \mor \Ext^i(\std(\lmb),\costd(\mu)) \mor \Ext^i(P(\lmb), \costd(\mu)) = 0$ and the downward induction on $\lmb$. 
\end{itemize}
\end{proof}

Since $\Hom(P(\lmb), \costd(\lmb)) = \Hom(P(\lmb)^{\leq \lmb}, \costd(\lmb)) = \Hom(\std(\lmb), \costd(\lmb)) \cong K$ we see that $(\costd(\lmb) : L(\lmb)) = 1$. 

\subsection{Standard Filtration}

A \emph{standard (resp.\ costandard) filtration} of an object is a filtration such that each of its successive quotients are standard (resp.\ costandard) objects. Let $\catc^\std$ denote the full subcategory of the objects having standard filtrations. 

\begin{prop}
For $M \in \catc$ having a standard (resp.\ costandard) filtration, the number of times $\std(\lmb)$ (resp.\ $\costd(\lmb)$) appears in (any) standard (resp.\ costandard) filtration of $M$ is given by $\dim \Hom(M, \costd(\lmb))$ (resp.\ $\dim \Hom(\std(\lmb), M)$). 
\label{filtr-times}
\end{prop}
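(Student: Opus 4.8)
The plan is to induct on the length of a chosen standard filtration of $M$, using the $\Ext^1$-vanishing from Proposition \ref{std-costd-ext} to peel off the top subquotient one step at a time. The base case is $M \cong \std(\mu)$: here Proposition \ref{std-costd-ext} gives $\dim \Hom(\std(\mu),\costd(\lmb)) = \delta_{\lmb\mu}$, which is exactly the number of times $\std(\lmb)$ occurs in the (length-one) filtration of $M$.

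For the inductive step I would fix a standard filtration $0 = M_0 \subset M_1 \subset \cdots \subset M_k = M$ with $M_i/M_{i-1} \cong \std(\mu_i)$ and apply $\Hom(-,\costd(\lmb))$ to the short exact sequence $0 \mor M_{k-1} \mor M \mor \std(\mu_k) \mor 0$, obtaining
\[
0 \mor \Hom(\std(\mu_k),\costd(\lmb)) \mor \Hom(M,\costd(\lmb)) \mor \Hom(M_{k-1},\costd(\lmb)) \mor \Ext^1(\std(\mu_k),\costd(\lmb)).
\]
The last term vanishes by Proposition \ref{std-costd-ext}, so $\dim \Hom(M,\costd(\lmb)) = \dim \Hom(\std(\mu_k),\costd(\lmb)) + \dim \Hom(M_{k-1},\costd(\lmb))$; by the base case and the inductive hypothesis the right-hand side equals $\delta_{\lmb\mu_k}$ plus the number of times $\std(\lmb)$ occurs in the filtration of $M_{k-1}$, which is precisely the number of times $\std(\lmb)$ occurs in the filtration of $M$. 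Since the quantity $\dim \Hom(M,\costd(\lmb))$ makes no reference to the filtration, this simultaneously proves that the count is the same for every standard filtration, accounting for the ``(any)'' in the statement.

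For the costandard assertion I would argue dually: a costandard filtration of $M$ has a costandard submodule $\costd(\mu_1) \subset M$ as its bottom term, and applying $\Hom(\std(\lmb),-)$ to $0 \mor \costd(\mu_1) \mor M \mor M/\costd(\mu_1) \mor 0$ again produces a short exact sequence of Hom-spaces because $\Ext^1(\std(\lmb),\costd(\mu_1)) = 0$ by Proposition \ref{std-costd-ext}; then I would induct on the length of the costandard filtration exactly as above, with base case $M \cong \costd(\mu)$ handled by $\dim \Hom(\std(\lmb),\costd(\mu)) = \delta_{\lmb\mu}$.

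There is no real obstacle: the entire argument rests on the single input $\Ext^1(\std(\lmb),\costd(\mu)) = 0$, already available as Proposition \ref{std-costd-ext}, together with the base-case Hom computation from the same proposition. The only point to state carefully is that, because the answer $\dim \Hom(M,\costd(\lmb))$ (resp.\ $\dim \Hom(\std(\lmb),M)$) is visibly independent of the chosen filtration, establishing the inductive count for one filtration already yields the statement for all of them.
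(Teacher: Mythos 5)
Your argument is correct and is exactly what the paper's one-line proof (``immediate from Proposition \ref{std-costd-ext}'') has in mind: you have simply spelled out the standard d\'evissage induction that the paper leaves implicit. No discrepancy.
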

\begin{proof}
This is immediate from Proposition \ref{std-costd-ext}. 
\end{proof}

\begin{prop}
Let $\Lmb' \subset \Lmb$ be an order ideal and let $\lmb \in \Lmb'$ be a maximal element. 
Then for $M \in \catc^\std$, $\Ker(M^{\Lmb'} \surj M^{\Lmb' \smallsetminus \{\lmb\}})$ is a direct sum of copies of $\std(\lmb)$. 
\label{filtr-lem}
\end{prop}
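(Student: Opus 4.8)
The plan is to deduce this together with the companion assertion of Proposition \ref{prop:hw-quot} (that $M^{\Lmb'}$ and $K := \Ker(M \surj M^{\Lmb'})$ are standardly filtered, with the factors of $M$ split according to whether their labels lie in $\Lmb'$) from one structural fact: a standard filtration of $M$ can be rearranged so that all factors with label outside $\Lmb'$ lie below all factors with label inside $\Lmb'$. The key input is Corollary \ref{ext-std-std}: if $\nu \in \Lmb \smallsetminus \Lmb'$ and $\mu \in \Lmb'$ then $\Ext^1(\std(\nu), \std(\mu)) = 0$, since $\Ext^1(\std(\nu), \std(\mu)) \neq 0$ would force $\nu < \mu$, impossible as $\Lmb'$ is an order ideal. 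Hence, in any standard filtration, whenever a factor $\std(\mu)$ with $\mu \in \Lmb'$ lies directly below a factor $\std(\nu)$ with $\nu \notin \Lmb'$, the corresponding length-two subquotient $0 \mor \std(\mu) \mor E \mor \std(\nu) \mor 0$ (which is classified by $\Ext^1(\std(\nu),\std(\mu)) = 0$) splits, and one may re-choose the intermediate submodule so as to interchange the two factors. Iterating these interchanges, I may assume $M$ has a standard filtration $0 = M_0 \subset \cdots \subset M_l = M$ and an index $t$ with $M_i/M_{i-1} \cong \std(\nu_i)$ where $\nu_i \notin \Lmb'$ for $i \leq t$ and $\nu_i \in \Lmb'$ for $i > t$.

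Put $B = M_t$ and $A = M/B$. Since $\Lmb'$ is an order ideal, every $\std(\mu)$ with $\mu \in \Lmb'$ has all composition factors labelled in $\Lmb'$, hence lies in $\catc_{\Lmb'}$, so $A \in \catc_{\Lmb'}$. Conversely, for $\nu \notin \Lmb'$ and any $Y \in \catc_{\Lmb'}$ one has $\Hom(\std(\nu), Y) = 0$, because $\Hom(\std(\nu), L(\rho)) = 0$ for each $\rho \in \Lmb'$ by Proposition \ref{hom-std-simp} ($\rho \neq \nu$); running this up the standard filtration of $B$ via left-exactness of $\Hom(-, Y)$ gives $\Hom(B, Y) = 0$. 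Therefore $A$ is the \emph{largest} quotient of $M$ in $\catc_{\Lmb'}$: if $M/Z \in \catc_{\Lmb'}$ then the composite $B \inj M \surj M/Z$ vanishes, so $Z \supseteq B$. This identifies $M^{\Lmb'} = A$ and $K = B$, both standardly filtered with the asserted distribution of factors.

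For the statement itself, note that $\lmb$ maximal in $\Lmb'$ makes $\Lmb' \smallsetminus \{\lmb\}$ an order ideal, and $M^{\Lmb' \smallsetminus \{\lmb\}} = (M^{\Lmb'})^{\Lmb' \smallsetminus \{\lmb\}}$ (the largest quotient of $M$ in $\catc_{\Lmb' \smallsetminus \{\lmb\}}$ agrees with that of its quotient $M^{\Lmb'}$, as $\catc_{\Lmb' \smallsetminus \{\lmb\}} \subset \catc_{\Lmb'}$). Applying the previous paragraph to the module $M^{\Lmb'}$ and the ideal $\Lmb' \smallsetminus \{\lmb\}$, the object $N := \Ker(M^{\Lmb'} \surj M^{\Lmb' \smallsetminus \{\lmb\}})$ is standardly filtered, its factors being the $\std(\nu)$ with $\nu$ a label occurring in $M^{\Lmb'}$ (so $\nu \in \Lmb'$) and $\nu \notin \Lmb' \smallsetminus \{\lmb\}$ — forcing $\nu = \lmb$. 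Thus $N$ has a standard filtration all of whose factors are $\std(\lmb)$, and since $\Ext^1(\std(\lmb), \std(\lmb)) = 0$ by Corollary \ref{ext-std-std} this filtration splits, so $N$ is a direct sum of copies of $\std(\lmb)$.

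The part needing the most care is the rearrangement step together with the identification $M^{\Lmb'} = A$: one must check that the successive interchanges genuinely yield standard filtrations of $M$ and that the vanishing $\Hom(B, Y) = 0$ propagates from the standard factors of $B$ to $B$ itself. Everything after that — including the final splitting, which uses only $\Ext^1(\std(\lmb), \std(\lmb)) = 0$ — is routine.
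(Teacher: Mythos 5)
Your proof is correct, and it takes a genuinely different route from the paper's. You establish the result by a filtration-rearrangement argument: using Corollary \ref{ext-std-std} to see that $\Ext^1(\std(\nu), \std(\mu)) = 0$ when $\nu \notin \Lmb'$, $\mu \in \Lmb'$, you interchange adjacent factors in a standard filtration of $M$ so that all factors labelled outside $\Lmb'$ sit at the bottom, then identify the top piece with $M^{\Lmb'}$ via the Hom-vanishing $\Hom(B, Y) = 0$ for $Y \in \catc_{\Lmb'}$, apply the same to $M^{\Lmb'}$ with the smaller ideal $\Lmb' \smallsetminus \{\lmb\}$, and finally split the resulting $\std(\lmb)$-filtration using $\Ext^1(\std(\lmb), \std(\lmb)) = 0$. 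The paper argues more abstractly: it uses Remark \ref{quot-rem} to see that the head of the kernel is a direct sum of copies of $L(\lmb)$, and then proves the kernel is projective in $\catc_{\Lmb'}$ by computing that $\Ext^1(N, L(\mu))$ vanishes for all $\mu \in \Lmb'$, chaining together several exact sequences and appealing to the vanishing of Ext against costandards (Proposition \ref{std-costd-ext}) and an induction on $|\Lmb'|$. Your approach is more elementary and makes the structure of the filtration visible, and as a byproduct proves the companion assertion about $M^{\Lmb'}$ and $\Ker(M \surj M^{\Lmb'})$ being standardly filtered in a single stroke; the paper's approach is shorter given its preparatory lemmas and avoids the explicit bubble-sort bookkeeping, at the cost of leaning on the full strength of Proposition \ref{std-costd-ext}.
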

Note that the proposition in particular implies that $M^{\Lmb'}$ and $\Ker(M \surj M^{\Lmb'})$ are in $\catc^\std$ for any order ideal $\Lmb' \subset \Lmb$. 
\begin{proof}
First note that the head of $\Ker(M^{\Lmb'} \surj M^{\Lmb' \smallsetminus \{\lmb\}})$ is a direct sum of copies of $L(\lmb)$ by Remark \ref{quot-rem}. 
Thus the projective cover, in $\catc_{\Lmb'}$, of this head is a direct sum of some copies of $\std(\lmb)$. So it suffices to show that $\Ker(M^{\Lmb'} \surj M^{\Lmb' \smallsetminus \{\lmb\}})$ is also a projective cover of $\hd \Ker(M^{\Lmb'} \surj M^{\Lmb' \smallsetminus \{\lmb\}})$, i.e. $\Ker(M^{\Lmb'} \surj M^{\Lmb' \smallsetminus \{\lmb\}})$ is projective in $\catc_{\Lmb'}$. 

Let $M' = M^{\Lmb'}$, $M''=M^{\Lmb' \smallsetminus \{\lmb\}}$ and $N = \Ker(M' \surj M'')$. 
We want to show that $\Ext^1(N, L(\mu))$ vanish for all $\mu \in \Lmb'$. 
We have exact sequences $\Hom(N, \costd(\mu)/L(\mu)) \mor \Ext^1(N, L(\mu)) \mor \Ext^1(N, \costd(\mu))$, $\Ext^1(M', \costd(\mu)) \mor \Ext^1(N, \costd(\mu)) \mor \Ext^2(M'', \costd(\mu))$ and $\Hom(\Ker(M \surj M'), \costd(\mu)) \mor \Ext^1(M', \costd(\mu)) \mor \Ext^1(M, \costd(\mu))$. Here
\begin{itemize}
\item $\Ext^1(M, \costd(\mu))$ vanishes by Proposition \ref{std-costd-ext}. 
\item $\Ext^2(M'', \costd(\mu))$ vanishes by Proposition \ref{std-costd-ext}, since $M''$ has a standard filtration by induction on $|\Lmb'|$. 
\item $\Hom(N, \costd(\mu)/L(\mu))$ and $\Hom(\Ker(M \surj M'), \costd(\mu))$ vanishes by Remark \ref{quot-rem} since the simple constituents of $\costd(\mu)/L(\mu)$ (resp.\ $\costd(\mu)$) are $L(\nu)$ ($\nu < \mu$ (resp.\ $\nu \leq \mu$)). 
\end{itemize}
And thus $\Ext^1(N, L(\mu)) = 0$ as desired. 
\end{proof}

Also from the above proof we get the following corollary:
\begin{cor}
$M \in \catc$ has a standard filtration if and only if $\Ext^1(M, \costd(\lmb)) = 0$ for all $\lmb \in \Lmb$. \hfill $\Box$
\label{filtr-crit}
\end{cor}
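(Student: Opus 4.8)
The plan is to prove the two implications separately; the forward one is routine, and the converse is an induction patterned on the proof of Proposition~\ref{filtr-lem}, the essential point being a good choice of the truncating weight. For the implication ``$M$ has a standard filtration $\Longrightarrow$ $\Ext^1_\catc(M,\costd(\lmb))=0$ for all $\lmb$'', I would induct on the length of a standard filtration: given a short exact sequence $0\to M'\to M\to\std(\nu)\to0$ with $M'$ already satisfying the vanishing, applying $\Hom_\catc(-,\costd(\lmb))$ yields an exact sequence $\Ext^1(\std(\nu),\costd(\lmb))\to\Ext^1(M,\costd(\lmb))\to\Ext^1(M',\costd(\lmb))$ whose outer terms vanish by Proposition~\ref{std-costd-ext} and by the inductive hypothesis.

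For the converse I would first pass to the full subcategory $\catc'=\catc_{\Lmb'}$, where $\Lmb'\subseteq\Lmb$ is the order ideal generated by the support $\{\mu:(M:L(\mu))\neq0\}$ of $M$: this is again a highest weight category whose standard and costandard objects are the restrictions of those of $\catc$, and in which $\Ext$-groups are unchanged because $\catc_{\Lmb'}$ is closed under extensions, so $M\in\catc^\std$ iff $M\in(\catc')^\std$ and the hypothesis is inherited. I would then induct on $|\Lmb'|$, the case $\Lmb'=\emptyset$ (so $M=0$) being trivial. For $M\neq0$, choose $\lmb$ maximal in $\Lmb'$; then $(M:L(\lmb))\neq0$, and setting $M'':=M^{\Lmb'\smallsetminus\{\lmb\}}$ and $N:=\Ker(M\surj M'')$ we obtain a short exact sequence $0\to N\to M\to M''\to0$ in which $M''\in\catc_{\Lmb'\smallsetminus\{\lmb\}}$ has a strictly smaller support ideal.

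Now I would reproduce the argument of Proposition~\ref{filtr-lem}. By Remark~\ref{quot-rem} the head of $N$ involves only $L(\lmb)$, so by Proposition~\ref{std-projectivity} (using maximality of $\lmb$) $N$ is a quotient of $\std(\lmb)^{\oplus k}$ and hence lies in $\catc_{\leq\lmb}$. The single place that argument used more than the bare vanishing $\Ext^1(M,\costd(\cdot))=0$ was the fact $M''\in\catc^\std$; this I obtain from the inductive hypothesis applied to $M''$ inside $\catc_{\Lmb'\smallsetminus\{\lmb\}}$, for which it suffices that $\Ext^1_\catc(M'',\costd(\mu))=0$ for every $\mu\in\Lmb'\smallsetminus\{\lmb\}$. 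Applying $\Hom_\catc(-,\costd(\mu))$ to $0\to N\to M\to M''\to0$ exhibits $\Ext^1(M'',\costd(\mu))$ as a quotient of $\Hom_\catc(N,\costd(\mu))$, and for $\mu\neq\lmb$ this vanishes: a nonzero map $N\to\costd(\mu)$ would have image containing $\soc\costd(\mu)\cong L(\mu)$ and carrying $L(\lmb)$ in its head, forcing $\lmb\leq\mu$, and then $\mu$, being a constituent of $N\in\catc_{\leq\lmb}$, would satisfy $\mu\leq\lmb$, whence $\mu=\lmb$. Thus $M''\in\catc^\std$, so $\Ext^2(M'',\costd(\mu))=0$ for every $\mu$ by Proposition~\ref{std-costd-ext}, and the remaining computations in the proof of Proposition~\ref{filtr-lem} go through unchanged to give $N\cong\std(\lmb)^{\oplus k}$; combining a standard filtration of $M''$ with the obvious one of $N$ then yields a standard filtration of $M$.

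The one point genuinely requiring care --- the analogue of the ``obstacle'' --- is the reduction to $\catc_{\Lmb'}$ together with the choice of $\lmb$ as a maximal element of the \emph{entire} weight poset of that subcategory: this is exactly what forces $\Hom_\catc(N,\costd(\mu))=0$ for all $\mu\neq\lmb$ and so lets $M''$ be fed back into the induction with no extension-of-homomorphisms argument needed. Had one instead only taken $\lmb$ maximal in a proper order ideal, the case $\mu=\lmb$ would require separate and considerably more delicate treatment.
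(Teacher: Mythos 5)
Your proof is correct and follows the route the paper's terse ``from the above proof'' intends, namely re-running the argument of Proposition~\ref{filtr-lem} with the hypothesis $\Ext^1(M,\costd(\cdot))=0$ replacing the use of $M\in\catc^\std$. You correctly observe that the one ingredient this does not automatically supply is $M''\in\catc^\std$, and you obtain it by showing $\Ext^1(M'',\costd(\mu))=0$ via the vanishing $\Hom(N,\costd(\mu))=0$ for $\mu\neq\lmb$ (which maximality of $\lmb$ in the support ideal delivers) and feeding this into the induction on $|\Lmb'|$.
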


By Proposition \ref{std-costd-ext} and Corollary \ref{filtr-crit} we get the following: 
\begin{cor}
if $0 \mor L \mor M \mor N \mor 0$ is an exact sequence in $\catc$ and $M, N \in \catc^\std$, then $L \in \catc^\std$. 
\label{filtr-ker}
\end{cor}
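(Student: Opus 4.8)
The plan is to reduce everything to the cohomological criterion of Corollary \ref{filtr-crit}: to show $L \in \catc^\std$ it suffices to check that $\Ext^1(L, \costd(\lmb)) = 0$ for every $\lmb \in \Lmb$. So fix $\lmb \in \Lmb$ and apply the long exact sequence of $\Ext$-groups obtained from $\Hom(-, \costd(\lmb))$ to the given short exact sequence $0 \mor L \mor M \mor N \mor 0$:
\[
\cdots \mor \Ext^1(M, \costd(\lmb)) \mor \Ext^1(L, \costd(\lmb)) \mor \Ext^2(N, \costd(\lmb)) \mor \cdots .
\]
The idea is that $\Ext^1(L, \costd(\lmb))$ is squeezed between two vanishing terms.

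The first term vanishes by Corollary \ref{filtr-crit}, since $M \in \catc^\std$. For the third term I would use that $N$ genuinely has a standard filtration: Proposition \ref{std-costd-ext} gives $\Ext^i(\std(\nu), \costd(\lmb)) = 0$ for every $\nu$ and every $i \geq 1$, in particular $\Ext^2(\std(\nu), \costd(\lmb)) = 0$; a routine dévissage — an induction on the length of a standard filtration of $N$, peeling off one standard layer at a time and invoking the long exact $\Ext$-sequence — then yields $\Ext^2(N, \costd(\lmb)) = 0$. With both outer terms zero, the displayed sequence forces $\Ext^1(L, \costd(\lmb)) = 0$, and since $\lmb$ was arbitrary, Corollary \ref{filtr-crit} (sufficiency direction) gives $L \in \catc^\std$.

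There is essentially no obstacle here; the only point worth a sentence is the vanishing of $\Ext^2(N, \costd(\lmb))$, which is precisely where the hypothesis $N \in \catc^\std$ — and not merely some weaker $\Ext^1$-condition on $N$ — is used, through Proposition \ref{std-costd-ext}.
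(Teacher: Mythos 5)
Your proof is correct and follows exactly the same route as the paper: use the long exact sequence to squeeze $\Ext^1(L,\costd(\lmb))$ between $\Ext^1(M,\costd(\lmb))$ and $\Ext^2(N,\costd(\lmb))$, both of which vanish by Proposition \ref{std-costd-ext} (with the standard-filtration dévissage you mention), and conclude via Corollary \ref{filtr-crit}. The only cosmetic difference is that you invoke Corollary \ref{filtr-crit} for the vanishing of $\Ext^1(M,\costd(\lmb))$ where the paper cites Proposition \ref{std-costd-ext} directly, but these amount to the same thing.
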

\begin{proof}
By Proposition \ref{std-costd-ext} we have $\Ext^1(M, \costd(\lmb)) = 0$ and $\Ext^2(N, \costd(\lmb))=0$ for any $\lmb \in \Lmb$. Thus by the exact sequence $\Ext^1(M, \costd(\lmb)) \mor \Ext^1(L, \costd(\lmb)) \mor \Ext^2(N, \costd(\lmb))$ we see $\Ext^1(L, \costd(\lmb))=0$. Thus by Corollary \ref{filtr-crit} we see $L \in \catc^\std$.  
\end{proof}

For an $M$ having a standard filtration let $(M : \std(\lmb))$ denote the number of times $\std(\lmb)$ appears in a standard filtration of $M$ (which does not depend on a choice of filtration by Corollary \ref{filtr-times}).

\subsection{Tilting Objects}
\begin{defn}
An object $T \in \catc$ is called a \emph{tilting} or a \emph{tilting object} if it has a standard filtration and $\Ext^1(\std(\lmb), T) = 0$ for all $\lmb \in \Lmb$. 
\end{defn}
Note that if $T$ is a tilting then so are its direct summands, 
because $T \in \catc$ is a tilting if and only if $\Ext^1(T, \costd(\lmb))$ and $\Ext^1(\std(\lmb), T)$ vanish for all $\lmb$. 

For $M \in \catc^\std$, define $\supp M \subset \Lmb$ as the order ideal of $\Lmb$ generated by all $\lmb$ such that $(M : \std(\lmb)) \neq 0$. 
For an $X \subset \Lmb$ let $X^\circ$ be the set of all non-maximal elements in $X$. 

\begin{prop}
Let $M \in \catc^\std$.
Then there is a tilting $T$ and an injection $M \inj T$ such that $\supp T = \supp M$, $T/M \in \catc^\std$ and $\supp(T/M) \subset (\supp M)^\circ$. 
\label{tilt-embed}
\end{prop}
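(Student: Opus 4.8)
The plan is to build $T$ out of $M$ by a finite chain of \emph{universal extensions}, each step attaching a direct sum of copies of one standard object $\std(\mu)$ with $\mu$ a non-maximal weight of $\supp M$, stopping as soon as $\Ext^1(\std(\nu),-)$ vanishes on all standards. Fix once and for all a linear extension $\prec$ of the partial order on $\Lmb$. Set $M_0 = M$. Given $M_i \in \catc^\std$ with $\supp M_i = \supp M$, if $\Ext^1(\std(\nu), M_i) = 0$ for every $\nu$, put $T = M_i$ and stop; otherwise let $\mu$ be the $\prec$-largest weight with $V := \Ext^1(\std(\mu), M_i) \neq 0$, set $d = \dim_K V$, and let $M_{i+1}$ be the middle term of the extension
\[ 0 \mor M_i \mor M_{i+1} \mor \std(\mu) \otimes_K V \mor 0 \]
classified by the element of $\Ext^1(\std(\mu) \otimes_K V, M_i) \cong \End_K(V)$ corresponding to $\mathrm{id}_V$ (here $\std(\mu) \otimes_K V \cong \std(\mu)^{\oplus d}$, and one uses $\End(\std(\mu)) \cong K$, axiom (2) of Definition \ref{defn-hwc}).

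Next I would record the structural bookkeeping. Since $M_i \in \catc^\std$, a nonzero $\Ext^1(\std(\mu), M_i)$ forces $\Ext^1(\std(\mu), \std(\xi)) \neq 0$ for some $\xi$ occurring in a standard filtration of $M_i$, hence $\mu < \xi$ by Corollary \ref{ext-std-std}; so $\mu$ lies strictly below an element of $\supp M_i$, i.e.\ $\mu \in \supp M_i$ is non-maximal, whence $\supp M_{i+1} = \supp M_i$. By induction $\supp M_i = \supp M$ at every stage and each $\mu$ employed lies in $(\supp M)^\circ$, which one checks is itself an order ideal. Also $M_{i+1} \in \catc^\std$: it is an extension of $\std(\mu)^{\oplus d} \in \catc^\std$ by $M_i \in \catc^\std$, and $\catc^\std$ is closed under extensions by the $\Ext^1(-,\costd(\lmb))$ criterion of Corollary \ref{filtr-crit}.

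The heart of the matter is to see that each extension kills the chosen $\Ext^1$ without disturbing anything at or above $\mu$. The universal property makes the connecting map $\Hom(\std(\mu), \std(\mu)\otimes_K V) \to \Ext^1(\std(\mu), M_i)$ surjective, so $\Ext^1(\std(\mu), M_i) \to \Ext^1(\std(\mu), M_{i+1})$ is zero; combined with $\Ext^1(\std(\mu), \std(\mu)) = 0$ (Corollary \ref{ext-std-std}), the long exact sequence gives $\Ext^1(\std(\mu), M_{i+1}) = 0$. For $\nu \succ \mu$ one has $\Ext^1(\std(\nu), M_i) = 0$ by the choice of $\mu$ and $\Ext^1(\std(\nu), \std(\mu)) = 0$ since $\nu \not< \mu$, so the long exact sequence again gives $\Ext^1(\std(\nu), M_{i+1}) = 0$. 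Hence the vector $v(M_i) := (\dim \Ext^1(\std(\nu), M_i))_{\nu \in \Lmb} \in \ZZ_{\ge 0}^{|\Lmb|}$, with coordinates listed in $\prec$-decreasing order, has all coordinates above $\mu$ equal to $0$ for both $M_i$ and $M_{i+1}$, while its $\mu$-coordinate drops from $d \ge 1$ to $0$; thus $v(M_{i+1}) <_{\mathrm{lex}} v(M_i)$. Since the lexicographic order on $\ZZ_{\ge 0}^{|\Lmb|}$ (most significant coordinate first) is a well-order, the chain terminates, say at $T := M_k$, which by construction is a tilting.

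To conclude I would assemble the chain: from $M = M_0 \subset M_1 \subset \cdots \subset M_k = T$ with $M_{i+1}/M_i \cong \std(\mu_i)^{\oplus d_i}$ and $\mu_i \in (\supp M)^\circ$ it follows that $T/M$ has a standard filtration with $\supp(T/M) \subseteq (\supp M)^\circ$; together with $\supp T = \supp M_k = \supp M$ and the tautological inclusion $M \inj T$ this gives the proposition. The hard part is the termination: passing from $M_i$ to $M_{i+1}$ can \emph{increase} $\Ext^1(\std(\nu), -)$ for weights $\nu < \mu$, so no naive induction on $\sum_\nu \dim \Ext^1(\std(\nu), M_i)$ works; choosing $\mu$ maximal and measuring progress lexicographically from the top — legitimate precisely because a universal extension at $\mu$ leaves $\Ext^1(\std(\nu), -)$ untouched for $\nu \succ \mu$ — is the device that forces the process to halt, and everything else is routine chasing of long exact sequences together with Corollary \ref{ext-std-std}.
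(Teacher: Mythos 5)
Your proof is correct, and at its core it is the same strategy the paper uses: build $T$ from $M$ by a finite sequence of extensions whose successive quotients are direct sums of standard objects $\std(\mu)$ with $\mu \in (\supp M)^\circ$, stopping when $\Ext^1(\std(-), M_i)$ has become identically zero. Where you differ from the paper is in the bookkeeping. The paper attaches a \emph{single} copy of $\std(\lmb)$ at each step (a nonsplit extension), chooses $\lmb$ to be a maximal element of the ``defect'' order ideal $\defect M = $ (ideal generated by $\{\lmb : \Ext^1(\std(\lmb), M) \neq 0\}$), and proves termination by showing $\defect M_1 \subset \defect M$ together with $\dim \Ext^1(\std(\lmb), M_1) = \dim \Ext^1(\std(\lmb), M) - 1$, so that $\lmb$ is eventually expelled from the defect. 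You instead perform one \emph{universal} extension per weight, which annihilates $\Ext^1(\std(\mu), -)$ in a single step, and you track progress along a fixed linear extension of the poset, giving termination in at most $|\Lmb|$ steps by the leading-coordinate (lexicographic) argument. The trade-off is that the paper's version is more elementary --- it avoids invoking the universal-extension formalism and the identification $\Ext^1(\std(\mu)\otimes V, M_i) \cong \End_K(V)$ --- while your version has a cleaner termination argument and more transparently isolates the two facts that make the induction close: $\Ext^1(\std(\mu),\std(\mu)) = 0$ and $\Ext^1(\std(\nu),\std(\mu)) = 0$ for $\nu \not< \mu$, both from Corollary~\ref{ext-std-std}. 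One point worth stating explicitly (you gesture at it) is the verification that $(\supp M)^\circ$ is itself an order ideal, which you need to pass from ``each $\mu_i$ lies in $(\supp M)^\circ$'' to ``$\supp(T/M) \subset (\supp M)^\circ$''; the check is immediate but should appear.
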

\begin{proof}
For an $M \in \catc^\std$, define $\defect M \subset \Lmb$, the \emph{defect} of $M$, to be the order ideal of $\Lmb$ generated by $\{ \lmb \in \Lmb : \Ext^1(\std(\lmb), M) \neq 0\}$. Note that $M \in \catc^\std$ is a tilting if and only if $\defect M = \varnothing$. 

If $\defect M = \varnothing$ then we are done. Assume $\defect M \neq \varnothing$. We embed $M$ into an $\tilde{M} \in \catc^\std$ with strictly smaller defect.

Take a maximal element $\lmb \in \defect M$. Then $\Ext^1(\std(\lmb), M) \neq 0$, and thus there exists a nonsplit exact sequence $0 \mor M \mor M_1 \mor \std(\lmb) \mor 0$. 
Assume $\Ext^1(\std(\mu), M_1) \neq 0$ for some $\mu \in \Lmb$. Since there is an exact sequence $\Ext^1(\std(\mu), M) \mor \Ext^1(\std(\mu), M_1) \mor \Ext^1(\std(\mu), \std(\lmb))$ it follows that either $\Ext^1(\std(\mu), M) \neq 0$ or $\Ext^1(\std(\mu), \std(\lmb)) \neq 0$. The first one implies $\mu \in \defect M$, while the second one implies $\mu < \lmb$ by Corollary \ref{ext-std-std}. The latter case implies $\mu \in \defect M$ and thus $\mu \in \defect M$ in either case. This shows $\defect M_1 \subset \defect M$. 
Moreover we claim that $\dim \Ext^1(\std(\lmb), M_1) < \dim \Ext^1(\std(\lmb), M)$. In fact, we have an exact sequence $\Hom(\std(\lmb), M_1) \mor \Hom(\std(\lmb), \std(\lmb)) \mor \Ext^1(\std(\lmb), M) \mor \Ext^1(\std(\lmb), M_1) \mor \Ext^1(\std(\lmb), \std(\lmb))$ where the last term is zero by Corollary \ref{ext-std-std}. But here $\Hom(\std(\lmb), M_1) \mor \Hom(\std(\lmb), \std(\lmb))$ is a zero map: otherwise we would have a morphism $\std(\lmb) \mor M_1$ such that the composition $\std(\lmb) \mor M_1 \surj \std(\lmb)$ is nonzero and thus an isomorphism (since $\End(\std(\lmb)) \cong K$), which contradicts to the assumption that $M_1 \surj \std(\lmb)$ is nonsplit. Thus we have an exact sequence $0 \mor \End(\std(\lmb)) \mor \Ext^1(\std(\lmb), M) \mor \Ext^1(\std(\lmb), M_1) \mor 0$ and this shows the claim. 

Repeating the construction above we have an $\tilde{M} \in \catc^\std$ and $M \inj \tilde{M}$ such that $\defect \tilde{M} \subset \defect M \smallsetminus \{\lmb\}$. 
Repeating again we get an embedding $M \inj T$ into a tilting. It is clear from the construction that $T/M \in \catc^\std$. 

We claim that $\supp(T/M) \subset (\supp M)^\circ$. By the construction it suffices to show that $\defect M \subset (\supp M)^\circ$. 
Assume $\Ext^1(\std(\lmb), M) \neq 0$ for some $\lmb$. Then $\Ext^1(\std(\lmb), \std(\mu)) \neq 0$ for some $\mu \in \supp M$. Since $\lmb < \mu$ by Corollary \ref{ext-std-std}, this shows that $\lmb$ is a non-maximal element in $\supp M$. This shows the claim. 
\end{proof}

By the proposition there is an embedding $\std(\lmb) \inj T$ such that $T$ is a tilting, $\supp T = \{\mu : \mu \leq \lmb\}$ and $(T : \std(\lmb)) = 1$. So there is an indecomposable summand $T(\lmb)$ of $T$ such that $(T(\lmb) : \std(\lmb)) = 1$.  By Proposition \ref{filtr-lem} we see that there in fact is an embedding $\std(\lmb) \inj T(\lmb)$ such that $T(\lmb) / \std(\lmb)$ has a standard filtration. 

Note that $\lmb$ can be recovered from $T(\lmb)$ as the unique maximal element in $\supp T(\lmb)$: in particular $T(\lmb) \not\cong T(\mu)$ if $\lmb \neq \mu$. 

\begin{prop}
Every tilting is a direct sum of the objects $T(\lmb)$. 
\label{indec-tilt}
\end{prop}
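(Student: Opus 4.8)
The plan is to prove this by induction on the total multiplicity $n(T) = \sum_{\mu} (T : \std(\mu))$ of standard subquotients of $T$, where the base case $n(T) = 0$ means $T = 0$, the empty direct sum. For the inductive step, given a tilting $T \neq 0$, I would pick a maximal element $\lmb$ of the (nonempty) order ideal $\supp T$ and put $m = (T : \std(\lmb)) \geq 1$; the goal is to exhibit $T(\lmb)^{\oplus m}$ as a direct summand of $T$, since the complement is again a tilting with strictly smaller $n$, and then induction applies. (One could instead first invoke Krull--Schmidt and treat indecomposable tiltings, but the direct induction on $n(T)$ avoids that.)

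To split off the summand, I would use Proposition \ref{filtr-lem} with $\Lmb' = \supp T$ (so $T = T^{\Lmb'}$): the kernel of $T \surj T^{\Lmb' \smallsetminus \{\lmb\}}$ is $\std(\lmb)^{\oplus m}$, giving an inclusion $\iota_T : \std(\lmb)^{\oplus m} \inj T$ whose cokernel $C_T := T^{\Lmb' \smallsetminus \{\lmb\}}$ lies in $\catc^\std$. On the other side, the canonical inclusion $\iota_0 : \std(\lmb)^{\oplus m} \inj T(\lmb)^{\oplus m}$ has cokernel $C_0 := (T(\lmb)/\std(\lmb))^{\oplus m}$, which lies in $\catc^\std$ with all simple constituents of the form $L(\mu)$, $\mu < \lmb$. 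Since $T$ and $T(\lmb)$ are tiltings, a standard dévissage gives $\Ext^1(X, T) = \Ext^1(X, T(\lmb)^{\oplus m}) = 0$ for every $X \in \catc^\std$; applying $\Hom(-, T)$ to $0 \to \std(\lmb)^{\oplus m} \xrightarrow{\iota_0} T(\lmb)^{\oplus m} \to C_0 \to 0$ and $\Hom(-, T(\lmb)^{\oplus m})$ to $0 \to \std(\lmb)^{\oplus m} \xrightarrow{\iota_T} T \to C_T \to 0$, I obtain $g : T(\lmb)^{\oplus m} \to T$ with $g \iota_0 = \iota_T$ and $h : T \to T(\lmb)^{\oplus m}$ with $h \iota_T = \iota_0$.

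The crux is then to see that $g$ is split, equivalently that $hg \in \End(T(\lmb)^{\oplus m})$ is invertible. From $g\iota_0 = \iota_T$ and $h \iota_T = \iota_0$ one gets $(hg - \mathrm{id})\iota_0 = 0$, so $hg - \mathrm{id}$ factors through $C_0$; hence for every $i, j$ the composite $T(\lmb) \hookrightarrow T(\lmb)^{\oplus m} \xrightarrow{hg - \mathrm{id}} T(\lmb)^{\oplus m} \twoheadrightarrow T(\lmb)$ has image a subquotient of $C_0$, whose constituents are all $L(\mu)$ with $\mu < \lmb$, whereas $(T(\lmb) : L(\lmb)) = 1$ because $\std(\lmb) \inj T(\lmb)$. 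So each such composite is non-surjective, hence (using that $T(\lmb)$ is indecomposable of finite length, so $\End(T(\lmb))$ is local) lies in $\mathrm{rad}\,\End(T(\lmb))$. Since $\mathrm{rad}\,\End(T(\lmb)^{\oplus m})$ consists exactly of the $m \times m$ matrices over $\mathrm{rad}\,\End(T(\lmb))$, this gives $hg - \mathrm{id} \in \mathrm{rad}\,\End(T(\lmb)^{\oplus m})$, so $hg$ is a unit, $(hg)^{-1}h$ is a retraction of $g$, and $T \cong T(\lmb)^{\oplus m} \oplus \mathrm{coker}(g)$, which completes the inductive step.

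The main obstacle I expect is the construction of the retraction $h$ together with the verification that $hg$ is invertible: producing $g$ is a routine lifting against the $\Ext^1$-vanishing of a tilting, but controlling the ``error term'' $hg - \mathrm{id}$ well enough to conclude that $g$ splits is the real point, and it is precisely here that one needs both the indecomposability of $T(\lmb)$ (to make $\End(T(\lmb))$ local) and the fact that $T(\lmb)/\std(\lmb)$ has a standard filtration with sections $\std(\mu)$, $\mu < \lmb$ only.
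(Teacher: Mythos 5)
Your proof is correct, and the overall strategy is the one the paper uses: choose a maximal element $\lmb$ of $\supp T$, produce comparison maps between $T$ and a direct sum of copies of $T(\lmb)$ by lifting $\std(\lmb)$ against the $\Ext^1$-vanishing of tiltings, and use the indecomposability of $T(\lmb)$ (via Fitting) to show the relevant endomorphism is invertible. The two differences are minor. First, the paper splits off a single copy of $T(\lmb)$ per step and iterates, whereas you split off $T(\lmb)^{\oplus m}$ with $m=(T:\std(\lmb))$ in one go; this is slightly more efficient but requires the standard fact $\mathrm{rad}\,\End(T(\lmb)^{\oplus m}) = M_m(\mathrm{rad}\,\End(T(\lmb)))$. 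Second, to show the composite is a unit, the paper observes directly that $f=(hk)^n f\neq 0$ for all $n$, so $hk$ is not nilpotent, and then applies Fitting; you instead show $hg-\mathrm{id}$ factors through $C_0$, whose simple constituents all have weight strictly below $\lmb$, whereas $(T(\lmb):L(\lmb))=1$, so each matrix entry of $hg-\mathrm{id}$ is non-surjective and hence lies in $\mathrm{rad}\,\End(T(\lmb))$. This second argument is a bit more structural and tracks why the error term is small, but both rest ultimately on $\End(T(\lmb))$ being local. In short: same route, with a cleaner handling of multiplicity and a slightly different phrasing of the Fitting step.
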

\begin{proof}
Let $T\neq 0$ be a tilting. Take a maximal element $\lmb \in \supp T$. 
We show that there is a split surjection $T \surj T(\lmb)$: this inductively shows the claim. 

By the maximality of $\lmb$ we see $(T : \std(\lmb)) \neq 0$. This implies, by Proposition \ref{filtr-lem} and the maximality of $\lmb$, that there is an injection $\std(\lmb) \inj T$ with cokernel $T / \std(\lmb)$ having a standard filtration. 
We name the morphisms $\std(\lmb) \inj T(\lmb)$ and $\std(\lmb) \inj T$ as $f$ and $g$ respectively. 

We have exact sequences $\Hom(T, T(\lmb)) \mor \Hom(\std(\lmb), T(\lmb)) \mor \Ext^1(T/\std(\lmb), T(\lmb)) = 0$ and $\Hom(T(\lmb), T) \mor \Hom(\std(\lmb),T) \mor \Ext^1( T(\lmb)/\std(\lmb), T) = 0$. Thus there are morphisms $h : T \mor T(\lmb)$ and $k : T(\lmb) \mor T$ such that $f=hg$ and $g=kf$. Then $f=(hk)^n f$ for any $n \geq 0$, and thus $hk \in \End(T(\lmb))$ is not nilpotent. Then by Fitting's lemma $hk$ is an isomorphism. Thus $h$ is a split surjection, as desired. 
\end{proof}

Also, repeated use of Proposition \ref{tilt-embed} shows the following:  

\begin{prop}
Any $M \in \catc^\std$ has a finite resolution $0 \mor M \mor T_0 \mor \cdots \mor T_r \mor 0$ by tiltings.  \hfill $\Box$
\label{tilt-resol}
\end{prop}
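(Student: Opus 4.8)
The plan is to iterate Proposition \ref{tilt-embed}. Starting from $M_0 = M \in \catc^\std$, Proposition \ref{tilt-embed} produces a tilting $T_0$, an injection $M_0 \inj T_0$, and a cokernel $M_1 := T_0/M_0$ which again lies in $\catc^\std$ and satisfies $\supp M_1 \subseteq (\supp M_0)^\circ$. Applying the proposition to $M_1$ yields $M_1 \inj T_1$ with $M_2 := T_1/M_1 \in \catc^\std$ and $\supp M_2 \subseteq (\supp M_1)^\circ$, and so on. Splicing the short exact sequences $0 \mor M_k \mor T_k \mor M_{k+1} \mor 0$ together (taking $T_k \mor T_{k+1}$ to be the composite $T_k \surj M_{k+1} \inj T_{k+1}$), one obtains a complex $0 \mor M \mor T_0 \mor T_1 \mor \cdots$ whose terms $T_k$ are tiltings and which is exact by construction; so the only point to verify is that the process terminates.

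For termination I would argue as follows. As long as $M_k \neq 0$, its standard filtration has at least one nonzero subquotient $\std(\lmb)$, so $(M_k : \std(\lmb)) \neq 0$ and $\supp M_k \neq \varnothing$; equivalently, $\supp M_k = \varnothing$ forces $M_k = 0$. Whenever $\supp M_k \neq \varnothing$, the order ideal $\supp M_k$ has a maximal element, so $(\supp M_k)^\circ \subsetneq \supp M_k$, and hence $\supp M_{k+1} \subseteq (\supp M_k)^\circ \subsetneq \supp M_k$. Thus $(\supp M_k)_{k \geq 0}$ is a strictly decreasing chain of subsets of the finite poset $\Lmb$ until it becomes empty, so $\supp M_{r+1} = \varnothing$, i.e. $M_{r+1} = 0$, for some $r$ (indeed $r < |\supp M_0|$). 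Then $T_r \cong M_r$ and $0 \mor M \mor T_0 \mor \cdots \mor T_r \mor 0$ is the desired finite resolution by tiltings.

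There is essentially no obstacle beyond bookkeeping: all the content sits in Proposition \ref{tilt-embed}, and the only thing needing a word of care is the termination, which rests on the two observations above — that the supports strictly shrink at each step (which is exactly the extra information $\supp(T/M) \subseteq (\supp M)^\circ$ recorded in Proposition \ref{tilt-embed}) and that an object of $\catc^\std$ with empty support vanishes. One could alternatively run the induction on the defect $\defect M_k$ as in the proof of Proposition \ref{tilt-embed}, but tracking the support is cleaner here.
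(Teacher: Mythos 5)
Your argument is exactly what the paper intends: it states the result with only the remark that it follows from repeated use of Proposition~\ref{tilt-embed}, and your explicit bookkeeping of the supports (using $\supp(T/M)\subseteq(\supp M)^\circ$ for termination) is the intended way to make that repetition finite. Correct, same approach.
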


\subsection{Ringel Duality}
Let us fix a tilting object $T$ such that every indecomposable tilting occurs at least once as its direct summand (such an object is called a \emph{full tilting}). 
Let $\catc^\vee$ be the category of all finite-dimensional left $\End(T)$-modules. 
Let $F = \Hom(-,T) : \catc \mor (\catc^\vee)^\mathrm{op}$. 

Note that, since $\Ext^1(N,T)=0$ for $N \in \catc^\std$, the functor $F$ is exact on $\catc^\std$, that is, it maps an exact sequence $0 \mor L \mor M \mor N \mor 0$ with $L, M, N \in \catc^\std$ to an exact sequence $0 \mor FN \mor FM \mor FL \mor 0$. 
This observation implies a more general consequence: suppose that there is an exact sequence $\cdots \mor M_1 \mor M_0 \mor 0$ in $\catc^\std$ bounded from right. Then Corollary \ref{filtr-ker} implies that $\Ker(M_1 \mor M_0) \in \catc^\std$ and thus we can work inductively to see that $0 \mor FM_0 \mor FM_1 \mor \cdots$ is exact. 

\begin{lem}
For any $M \in \catc$ and any tilting $T'$, the map $\Hom_\catc(M, T') \mor \Hom_{\catc^\vee}(FT', FM)$ induced from $F$ is an isomorphism. 
\label{hommt-lemma}
\end{lem}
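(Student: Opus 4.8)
The plan is to reduce to the case $T' = T$ by additivity, and then to finish by the Yoneda lemma applied to the left regular $\End(T)$-module. Write $\eta_{T'} \colon \Hom_\catc(M,T') \mor \Hom_{\catc^\vee}(FT',FM)$, $a \mapsto Fa$, for the map in question (recall $F$ is contravariant, so $Fa \in \Hom_{\catc^\vee}(FT',FM)$). First I would note that $F$ is additive: for tiltings $T_1, T_2$ the canonical identifications $F(T_1 \oplus T_2) = \Hom_\catc(T_1 \oplus T_2, T) = FT_1 \oplus FT_2$ (as left $\End(T)$-modules) and $\Hom_\catc(M, T_1 \oplus T_2) = \Hom_\catc(M,T_1) \oplus \Hom_\catc(M,T_2)$ are visibly compatible with $\eta$, so that $\eta_{T_1 \oplus T_2} = \eta_{T_1} \oplus \eta_{T_2}$ as a block-diagonal map between direct sums; and such a map is an isomorphism if and only if each block is. By Proposition \ref{indec-tilt} every tilting $T'$ is a direct sum of indecomposable tiltings $T(\lmb)$, each of which is a direct summand of the full tilting $T$. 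Applying the previous remark twice (first to $T = T(\lmb) \oplus (\text{rest})$, then to $T' = \bigoplus_\lmb T(\lmb)^{\oplus m_\lmb}$), it suffices to show that $\eta_T$ is an isomorphism.

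For this, observe that $FT = \Hom_\catc(T,T) = \End(T)$, and that the left $\End(T)$-module structure on $FT$ coming from the definition of $F$ (namely $\phi \cdot f = \phi \circ f$ for $\phi \in \End(T)$, $f \in \Hom_\catc(T,T)$) is just left multiplication; that is, $FT$ is the left regular module. By the Yoneda lemma there is a natural isomorphism $\Hom_{\catc^\vee}(\End(T), FM) \overset{\sim}\mor FM$ given by $g \mapsto g(\mathrm{id}_T)$. I would then trace through the composite $\Hom_\catc(M,T) \overset{\eta_T}{\mor} \Hom_{\catc^\vee}(FT, FM) \overset{\sim}\mor FM$: for $a \in \Hom_\catc(M,T) = FM$ the morphism $Fa \colon \End(T) \mor FM$ sends $\phi \mapsto \phi \circ a$, so $(Fa)(\mathrm{id}_T) = a$. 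Hence the composite is the identity of $FM$, so $\eta_T$ is an isomorphism, and we are done.

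The argument is essentially formal, and no exactness property of $F$ is used, so it applies to an arbitrary $M \in \catc$ rather than only to $M \in \catc^\std$. The only point that needs care is the bookkeeping of the left $\End(T)$-module structures: checking that under the definition of $F$ the module $FT$ really is the left regular $\End(T)$-module, that $Fa$ acts by postcomposition with $a$, and that the module identification $F(T_1\oplus T_2)=FT_1\oplus FT_2$ is compatible with $\eta$. The secondary subtlety — that a general tilting $T'$ need not be a direct summand of $T$ itself, only of a tilting assembled from summands of $T$ — is taken care of by the structure theorem for tiltings (Proposition \ref{indec-tilt}) together with the additivity of $\eta$.
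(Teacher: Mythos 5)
Your proposal is correct and follows essentially the same route as the paper: reduce to $T'=T$ by additivity and the structure theorem for tiltings (Proposition \ref{indec-tilt}), then observe that $FT=\End(T)$ is the left regular module so the map is the Yoneda isomorphism. The paper states both steps tersely ("For $T'=T$ it is clear\ldots\ can be seen from the fact that $T'$ appears as a direct summand of some $T^{\oplus m}$"); you have simply spelled them out.
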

\begin{proof}
For $T' = T$ it is clear. For a general case, it can be seen from the fact that $T'$ appears as a direct summand of some $T^{\oplus m}$ ($m \gg 0$). 
\end{proof}

\begin{prop}
The indecomposable projectives in $\catc^\vee$ are given by $F T(\lmb)$ ($\lmb \in \Lmb$). 
\end{prop}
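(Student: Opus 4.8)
The plan is to exploit the fact that $\catc^\vee = \text{$\End(T)$-mod}$ and that $T = \bigoplus_\lmb T(\lmb)^{\oplus n_\lmb}$ is a full tilting, so that $\End(T)$ decomposes compatibly with this direct sum. The projective $\End(T)$-modules are the direct summands of $\End(T)$ viewed as a left module over itself, and a basic fact about module categories tells us that the indecomposable projective left $\End(T)$-modules are exactly $\Hom_\catc(T(\lmb), T)$ as $\lmb$ ranges over those indices appearing in $T$ — which, since $T$ is full, is all of $\Lmb$. But $\Hom_\catc(T(\lmb), T) = FT(\lmb)$, so this is precisely the claim, provided I establish two things: that each $FT(\lmb)$ is a projective $\End(T)$-module, and that it is indecomposable, with the $FT(\lmb)$ pairwise non-isomorphic.

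First I would show projectivity. Writing $T = \bigoplus_i T_i$ with each $T_i$ indecomposable, we have $\End(T) = \bigoplus_{i,j} \Hom_\catc(T_i, T_j)$, and as a left $\End(T)$-module $\End(T) = \bigoplus_j \Hom_\catc(T, T_j) = \bigoplus_j FT_j$. Hence each $FT_j$ is a direct summand of the free module $\End(T)$, so it is projective; in particular each $FT(\lmb)$ is projective. Next, indecomposability: by Lemma \ref{hommt-lemma} (with $M = T(\lmb)$ and $T' = T(\lmb)$, both tilting), the map $F$ induces a ring isomorphism $\End_\catc(T(\lmb)) \xrightarrow{\sim} \End_{\catc^\vee}(FT(\lmb))^{\mathrm{op}}$. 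Since $T(\lmb)$ is indecomposable and every object of $\catc$ has finite length, $\End_\catc(T(\lmb))$ is a local ring (Fitting's lemma), hence so is $\End_{\catc^\vee}(FT(\lmb))$, so $FT(\lmb)$ is indecomposable. Finally, Lemma \ref{hommt-lemma} also gives $\Hom_{\catc^\vee}(FT(\lmb), FT(\mu)) \cong \Hom_\catc(T(\mu), T(\lmb))$, and for $\lmb \neq \mu$ the indecomposable tiltings $T(\lmb), T(\mu)$ are non-isomorphic (by the remark preceding Proposition \ref{indec-tilt}, $\lmb$ is the unique maximal element of $\supp T(\lmb)$), so $T(\lmb)$ and $T(\mu)$ cannot each be a direct summand of the other and hence $FT(\lmb) \not\cong FT(\mu)$.

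Putting these together: every indecomposable projective in $\catc^\vee$ is a direct summand of the free module $\End(T) = \bigoplus_j FT_j$, hence isomorphic to some $FT_j = FT(\lmb)$ (the $T_j$ being the indecomposable summands of $T$, which since $T$ is full run over all $T(\lmb)$); conversely each $FT(\lmb)$ is an indecomposable projective, and these are pairwise non-isomorphic. Therefore $\{FT(\lmb) : \lmb \in \Lmb\}$ is exactly the set of indecomposable projectives of $\catc^\vee$ up to isomorphism. I expect the only delicate point to be invoking Fitting's lemma / the Krull--Schmidt property to pass locality of endomorphism rings through $F$, and making sure Lemma \ref{hommt-lemma} applies in the form $\Hom_\catc(T', T'') \cong \Hom_{\catc^\vee}(FT'', FT')$ for $T', T''$ both tilting — but this is immediate from two applications of the lemma as stated (or from the $\mathrm{op}$ bookkeeping), so there is no real obstacle.
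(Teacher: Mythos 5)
Your proof is correct and follows essentially the same route as the paper: decompose $\End(T)$ as a left module over itself into the summands $FT(\lmb)$, then use Lemma \ref{hommt-lemma} to transfer $\End(T(\lmb)) \cong \End(FT(\lmb))$ and conclude indecomposability from the indecomposability of $T(\lmb)$. Your additional check that the $FT(\lmb)$ are pairwise non-isomorphic is a reasonable extra verification but does not change the nature of the argument.
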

\begin{proof}
Since $\End(T)$ is, as a left $\End(T)$-module, a direct sum of the modules of the form $F T(\lmb)$ ($\lmb \in \Lmb$), it suffices to show that they are indeed indecomposable. 
By the previous lemma $\End(F T(\lmb)) \cong \End(T(\lmb))$, and since $T(\lmb)$ is indecomposable $\End(T(\lmb))$ contains no idempotents. Thus $F T(\lmb)$ is indecomposable. 
\end{proof}

\begin{prop}
For $M, N \in \catc^\std$ and any $i \geq 0$, $\Ext^i(M, N) \cong \Ext^i(FN, FM)$. For $i=0$ this isomorphism is equal to the map induced from $F$, and for $i=1$ this isomorphism is equal to the map $[0 \mor N \mor X \mor M \mor 0] \mapsto [0 \mor FM \mor FX \mor FN \mor 0]$ where these exact sequences are seen as elements in certain $\Ext^1$ groups. 
\label{ringel-ext}
\end{prop}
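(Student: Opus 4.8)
The idea is to build one finite complex of $K$-vector spaces that simultaneously computes $\Ext^\bullet_\catc(M,N)$ and $\Ext^\bullet_{\catc^\vee}(FN,FM)$, starting from a finite tilting coresolution of $N$. By Proposition \ref{tilt-resol} there is a resolution $0 \mor N \mor T^0 \mor T^1 \mor \cdots \mor T^r \mor 0$ by tiltings, and by the construction in Proposition \ref{tilt-embed} it can be taken so that every cokernel $\mathrm{coker}(T^{j-1} \mor T^j)$ again lies in $\catc^\std$; thus the whole coresolution lives in $\catc^\std$.

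The key auxiliary fact I would establish first is that $\Ext^k_\catc(X,T')=0$ for every $X \in \catc^\std$, every tilting $T'$, and every $k \geq 1$. I would prove this by induction on $k$: the case $k=1$ is the defining property of a tilting together with a d\'evissage along a standard filtration of $X$, and for $k\geq 2$ a further d\'evissage reduces to $X=\std(\lmb)$, where the sequence $0 \mor K_\lmb \mor P(\lmb) \mor \std(\lmb) \mor 0$ with $K_\lmb := \Ker(P(\lmb)\surj\std(\lmb)) \in \catc^\std$ gives $\Ext^k_\catc(\std(\lmb),T') \cong \Ext^{k-1}_\catc(K_\lmb,T')$ (because $\Ext^m_\catc(P(\lmb),-)=0$ for $m\geq1$), which vanishes by induction. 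Granting this, $0 \mor N \mor T^\bullet \mor 0$ is a resolution of $N$ by $\Hom_\catc(M,-)$-acyclic objects, so $\Ext^i_\catc(M,N)$ is the $i$-th cohomology of the complex $\Hom_\catc(M,T^0) \mor \Hom_\catc(M,T^1) \mor \cdots \mor \Hom_\catc(M,T^r)$.

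Next I would transport everything to $\catc^\vee$. Since $F$ is exact on $\catc^\std$ (the remark preceding the statement) and sends tiltings to projective objects of $\catc^\vee$ (the preceding proposition), applying $F$ to $0 \mor N \mor T^\bullet \mor 0$ produces a finite projective resolution $0 \mor FT^r \mor \cdots \mor FT^1 \mor FT^0 \mor FN \mor 0$ of $FN$; hence $\Ext^i_{\catc^\vee}(FN,FM)$ is the $i$-th cohomology of $\Hom_{\catc^\vee}(FT^0,FM) \mor \cdots \mor \Hom_{\catc^\vee}(FT^r,FM)$. Lemma \ref{hommt-lemma}, applied to the tiltings $T^j$, says that $g \mapsto Fg$ is an isomorphism $\Hom_\catc(M,T^j) \overset{\sim}{\mor} \Hom_{\catc^\vee}(FT^j,FM)$, and it intertwines the differentials because $(Fg)\circ F(d^{j-1}) = F(d^{j-1}\circ g)$. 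So the two complexes are isomorphic, their cohomologies agree, and we obtain $\Ext^i_\catc(M,N) \cong \Ext^i_{\catc^\vee}(FN,FM)$ for all $i$. By construction the degree-$0$ isomorphism is $g\mapsto Fg$. For degree $1$, I would identify a class $[0 \mor N \mor X \mor M \mor 0]$ (note $X\in\catc^\std$, since $\catc^\std$ is closed under extensions by Corollary \ref{filtr-crit}) with the corresponding element of $H^1(\Hom_\catc(M,T^\bullet))$ via the standard dimension shift, apply $F$, and check by a diagram chase that the output is the class of $[0 \mor FM \mor FX \mor FN \mor 0]$.

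The principal difficulty is bookkeeping rather than any single hard idea: one must arrange the tilting coresolution to stay inside $\catc^\std$ so that $F$ preserves its exactness, prove the acyclicity $\Ext^m_\catc(X,T')=0$ ($m\geq1$, $X\in\catc^\std$, $T'$ a tilting) needed so that the coresolution actually computes $\Ext^\bullet_\catc(M,-)$, and — the most delicate point — verify that the identification of the two cohomologies is compatible with $F$ in the precise way demanded for $i=0$ and $i=1$.
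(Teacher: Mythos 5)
Your proposal is correct and follows essentially the same route as the paper: take a finite tilting coresolution $0\to N\to T^\bullet\to 0$, observe that $\Ext^i_\catc(M,N)$ is the $i$-th cohomology of $\Hom_\catc(M,T^\bullet)$, apply $F$ to obtain a projective resolution of $FN$, use Lemma \ref{hommt-lemma} to identify the two Hom-complexes, and finish the $i=1$ case by a diagram chase. The one place you diverge is the justification that the coresolution computes $\Ext^i_\catc(M,N)$: you prove the full acyclicity statement $\Ext^k_\catc(X,T')=0$ for all $k\geq 1$, all $X\in\catc^\std$ and all tiltings $T'$, by the dimension shift along $0\to K_\lambda\to P(\lambda)\to\std(\lambda)\to 0$, so that $T^\bullet$ is a coresolution by $\Hom_\catc(M,-)$-acyclics. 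The paper instead invokes the balance/double-complex argument of \cite[Theorem 2.7.6]{Wei} applied to $\Hom(P_\bullet,T^\bullet)$ (with $P_\bullet\to M$ a projective resolution), which requires only the \emph{contravariant} exactness of $\Hom_\catc(-,T^q)$ on $\catc^\std$, i.e.\ only the $k=1$ vanishing $\Ext^1_\catc(X,T')=0$, immediate from the definition of tilting by d\'evissage. Both are sound; yours is more self-contained since it avoids a black-box citation, at the cost of the extra higher-degree induction. One further small simplification you could make: the cosyzygies of any tilting coresolution of $N\in\catc^\std$ automatically lie in $\catc^\std$ by Corollary \ref{filtr-ker}, so you need not appeal to the particular construction in Proposition \ref{tilt-embed}.
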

\begin{proof}
Take a finite tilting resolution $0 \mor N \mor T_0 \mor \cdots \mor T_r \mor 0$ of $N$ which exists by Proposition \ref{tilt-resol}. 
Then $0 \mor FT_r \mor \cdots \mor FT_0 \mor FN \mor 0$ is a projective resolution since $F$ is exact on $\catc^\std$ and $FT_i$ are projective. 
By the same argument as in \cite[Theorem 2.7.6]{Wei} we see (since $\Hom(-, T_i)$ are exact on $\catc^\std$)  
that $\Ext^i(M, N)$ is the $i$-th cohomology of the complex $\Hom(M, T_\bullet)$. 
On the other hand, $\Ext^i(FN, FM)$ is the $i$-th cohomology of $\Hom(FT_\bullet, FM)$. 
By Lemma \ref{hommt-lemma} the map induced from $F$ gives an isomorphism between these two complexes and thus the first claim follows. The latter claim for $i=0$ also follows from this argument. 

Recall the correspondence from extensions to Ext group (\cite[\S A.5]{ASS}): for a projective resolution $\cdots \mor P_1 \mor P_0 \mor M \mor 0$, there always exist $f : P_1 \mor N$ and $g : P_0 \mor X$ such that 
\[
\begin{CD}
 @. P_1 @>>> P_0 @>>> M @>>> 0 \\
@. @VVfV @VVgV @| @. \\
0 @>>> N @>>> X @>>> M @>>> 0
\end{CD}
\]
commutes, and then the element $[0 \mor N \mor X \mor M \mor 0] \in \Ext^1(M, N)$ is given by taking the class of $f \in \Hom(P_1, N)$. Chasing the double-complex argument above we see that the correspondence can also be obtained by taking $h : X \mor T_0$ and $k : M \mor T_1$ such that
\[
\begin{CD}
0 @>>> N @>>> X @>>> M @>>> 0 \\
@. @| @VVhV @VVkV @. \\
0 @>>> N @>>> T_0 @>>> T_1
\end{CD}
\]
commute and taking the class of $k \in \Hom(M, T_1)$. Applying $F$ to the diagram above we get
\[
\begin{CD}
0 @<<< FN @<<< FX @<<< FM @<<< 0 \\
@. @| @AA{Fh}A @AA{Fk}A @. \\
0 @<<< FN @<<< FT_0 @<<< FT_1
\end{CD}
\]
with rows exact and $FT_0, FT_1$ projective. Thus $[0 \mor FM \mor FX \mor FN \mor 0] \in \Ext^1(FN, FM)$ is equal to the class of $Fk \in \Hom(FT_1, FM)$ and this shows the claim. 
\end{proof}

\begin{prop}
$\catc^\vee$ is a highest weight category with weight poset $\Lmb^\mathrm{op}$, the opposite poset of $\Lmb$, and standard objects $\{F\std(\lmb)\}$. 
\label{ringel-hwc}
\end{prop}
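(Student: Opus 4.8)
The plan is to verify the three axioms of Definition~\ref{defn-hwc} directly for $\catc^\vee$, the category of finite-dimensional left $\End(T)$-modules, with weight poset $\Lmb^{\mathrm{op}}$ and standard objects $\std^\vee(\lmb) := F\std(\lmb) = \Hom_\catc(\std(\lmb),T)$. The ambient conditions — abelian $K$-category with enough projectives and injectives in which every object has finite length — hold automatically because $\catc^\vee$ is the module category of the finite-dimensional algebra $\End(T)$, so all the content lies in matching up simples, projective covers and standard objects with the reversed poset.

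First I would pin down the simples and their projective covers. By the preceding proposition the indecomposable projectives of $\catc^\vee$ are the $FT(\lmb)$, $\lmb \in \Lmb$; since $F$ restricted to tiltings is a (contravariant) fully faithful functor by Lemma~\ref{hommt-lemma}, it carries pairwise non-isomorphic indecomposable tiltings to pairwise non-isomorphic indecomposables, so the simple heads $L^\vee(\lmb) := \hd FT(\lmb)$ are pairwise distinct and exhaust the simples of $\catc^\vee$; thus $\Lmb$ indexes them. Next, applying the functor $F$ — which is exact on $\catc^\std$ — to the short exact sequence $0 \mor \std(\lmb) \mor T(\lmb) \mor T(\lmb)/\std(\lmb) \mor 0$ (all three terms in $\catc^\std$, with $T(\lmb)/\std(\lmb)$ filtered by $\std(\mu)$, $\mu < \lmb$, by the construction of $T(\lmb)$ given above) yields a surjection $FT(\lmb) \surj F\std(\lmb)$ whose kernel $F(T(\lmb)/\std(\lmb))$ is filtered with sections $F\std(\mu) = \std^\vee(\mu)$ for $\mu < \lmb$. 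Since the embedding $\std(\lmb)\inj T(\lmb)$ exhibits $F\std(\lmb) = \Hom_\catc(\std(\lmb),T) \neq 0$, and any nonzero quotient of the indecomposable projective $FT(\lmb)$ has head $L^\vee(\lmb)$, we get $\hd\std^\vee(\lmb) \cong L^\vee(\lmb)$. This displayed surjection is exactly axiom~(3): $P^\vee(\lmb) = FT(\lmb) \surj \std^\vee(\lmb)$ with kernel filtered by $\std^\vee(\mu)$ for $\mu < \lmb$ in $\Lmb$, i.e. for $\mu > \lmb$ in $\Lmb^{\mathrm{op}}$.

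Axioms~(1) and~(2) then follow from the degree-$0$ Hom-isomorphism of Proposition~\ref{ringel-ext}: $\Hom_{\catc^\vee}(\std^\vee(\lmb),\std^\vee(\mu)) = \Hom_{\catc^\vee}(F\std(\lmb),F\std(\mu)) \cong \Hom_\catc(\std(\mu),\std(\lmb))$. By axiom~(1) for $\catc$ the right-hand side vanishes unless $\mu \leq \lmb$ in $\Lmb$, i.e. unless $\lmb \leq \mu$ in $\Lmb^{\mathrm{op}}$; and for $\lmb = \mu$ it equals $\End_\catc(\std(\lmb)) \cong K$ by axiom~(2) for $\catc$. These are precisely axioms~(1) and~(2) for $\catc^\vee$, completing the verification.

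I do not expect a genuine obstacle: once Propositions~\ref{indec-tilt} and~\ref{ringel-ext} and the description of the indecomposable projectives of $\catc^\vee$ are in hand, the argument is bookkeeping. The one point needing slight care is the compatibility demanded by Definition~\ref{defn-hwc} between the indexing of the simples $L^\vee(\lmb)$ and of the standard objects $\std^\vee(\lmb)$, i.e. the identification $\hd\std^\vee(\lmb) \cong L^\vee(\lmb)$, which is why one routes through the remark that a nonzero quotient of an indecomposable projective retains its simple head; and throughout one must keep track that passing to $\catc^\vee$ reverses the order because $F$ is contravariant and interchanges source and target.
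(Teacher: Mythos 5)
Your proof is correct and takes essentially the same route as the paper: axioms (1) and (2) via the Hom-isomorphism $\Hom_{\catc^\vee}(F\std(\lmb),F\std(\mu)) \cong \Hom_\catc(\std(\mu),\std(\lmb))$, and axiom (3) by applying $F$ to $0 \mor \std(\lmb) \mor T(\lmb) \mor T(\lmb)/\std(\lmb) \mor 0$. Your additional care in matching the indexing of the simples $L^\vee(\lmb)=\hd FT(\lmb)$ with the standard objects, by observing that any nonzero quotient of the indecomposable projective $FT(\lmb)$ retains the simple head, is a small but genuine improvement in rigor over the paper's terser write-up, which leaves that identification implicit.
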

\begin{proof}
Since $\Hom(F\std(\lmb), F\std(\mu)) \cong \Hom(\std(\mu),\std(\lmb))$ the first two axioms are clear. 

We have an exact sequence $0 \mor \std(\lmb) \mor T(\lmb) \mor M \mor 0$ such that $M$ has a filtration by $\std(\mu)$ ($\mu < \lmb$). Applying $F$ we get an exact sequence $0 \mor FM \mor FT(\lmb) \mor F\std(\lmb) \mor 0$ with $FM$ having a filtration by $F\std(\mu)$ ($\mu < \lmb$). This checks the last axiom. 
\end{proof}

\begin{prop}
$F$ restricts to a contravariant equivalence between $\catc^\std$ and $(\catc^\vee)^\std$. 
\label{ringel-dual}
\end{prop}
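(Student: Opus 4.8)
The plan is to establish the three ingredients of a contravariant equivalence: that $F$ sends $\catc^\std$ into $(\catc^\vee)^\std$, that $F$ is fully faithful on $\catc^\std$, and that every object of $(\catc^\vee)^\std$ lies in the essential image of $F|_{\catc^\std}$. The first point I would record by combining the exactness of $F$ on $\catc^\std$ (valid since $\Ext^1(N,T)=0$ for $N\in\catc^\std$) with $F\std(\lmb)=\std^\vee(\lmb)$ from Proposition \ref{ringel-hwc}: applying $F$ to a standard filtration of $M\in\catc^\std$ then yields a standard filtration of $FM$ with the order of the subquotients reversed, so $FM\in(\catc^\vee)^\std$. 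Full faithfulness on $\catc^\std$ is exactly the $i=0$ assertion of Proposition \ref{ringel-ext}: the map $\Hom_\catc(M,N)\mor\Hom_{\catc^\vee}(FN,FM)$ induced by $F$ is an isomorphism for all $M,N\in\catc^\std$.

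The substance is density. Here the plan is to let $\mathcal{I}\subseteq(\catc^\vee)^\std$ denote the essential image of $F|_{\catc^\std}$ and to prove (a) $\mathcal{I}$ contains every standard object $\std^\vee(\lmb)$, and (b) $\mathcal{I}$ is closed under extensions in $\catc^\vee$; since every object of $(\catc^\vee)^\std$ is an iterated extension of the $\std^\vee(\lmb)$, (a) and (b) force $\mathcal{I}=(\catc^\vee)^\std$. Part (a) is immediate, as $\std^\vee(\lmb)=F\std(\lmb)$ and $\std(\lmb)\in\catc^\std$. For (b), given a short exact sequence $0\mor FM\mor Y\mor FM'\mor 0$ in $\catc^\vee$ with $M,M'\in\catc^\std$, I would view its class in $\Ext^1_{\catc^\vee}(FM',FM)$ and invoke the $i=1$ part of Proposition \ref{ringel-ext}, which identifies this group with $\Ext^1_\catc(M,M')$ via $[0\mor M'\mor X\mor M\mor 0]\mapsto[0\mor FM\mor FX\mor FM'\mor 0]$. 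Hence $Y\cong FX$ for some extension $0\mor M'\mor X\mor M\mor 0$ in $\catc$, and it remains to check $X\in\catc^\std$: the long exact sequence in $\Ext^\bullet_\catc(-,\costd(\lmb))$ together with $\Ext^1_\catc(M,\costd(\lmb))=\Ext^1_\catc(M',\costd(\lmb))=0$ (Corollary \ref{filtr-crit}) gives $\Ext^1_\catc(X,\costd(\lmb))=0$ for all $\lmb$, whence $X\in\catc^\std$ by Corollary \ref{filtr-crit} again. Thus $Y\cong FX\in\mathcal{I}$, which completes density and hence the proof.

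I expect the density step to be the main obstacle: it is the only place where one needs more than the bare isomorphism $\Ext^i_\catc(M,N)\cong\Ext^i_{\catc^\vee}(FN,FM)$, namely its explicit realization in terms of short exact sequences for $i=1$, and one must keep track that the middle term $X$ of the lifted extension again lies in $\catc^\std$. The first two steps, by contrast, are essentially direct citations of the exactness of $F$ on $\catc^\std$ and of Proposition \ref{ringel-ext}.
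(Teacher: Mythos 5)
Your proposal is correct and follows essentially the same path as the paper: establish that $F$ lands in $(\catc^\vee)^\std$ and is fully faithful there (both via Proposition \ref{ringel-ext}), then deduce essential surjectivity by lifting extensions through the explicit $i=1$ identification in Proposition \ref{ringel-ext} and inducting on the length of the standard filtration. Your extra check that the lifted middle term $X$ lies in $\catc^\std$ (via Corollary \ref{filtr-crit}) is a detail the paper leaves implicit, since an extension of two objects with standard filtrations visibly has one by concatenation, but it does no harm to spell it out.
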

\begin{proof}
We saw that $F|_{\catc^\std}$ is fully faithful and thus it suffices to show the essential-surjectivity: i.e. we want to show that for any $N \in (\catc^\vee)^\std$ there exists an $M \in \catc^\std$ such that $FM \cong N$. This follows by the induction on the length of $N \in (\catc^\vee)^\std$: if $0 \mor N' \mor N \mor N'' \mor 0$ is an exact sequence with $N' \cong FM'$ and $N'' \cong FM''$ ($M', M'' \in \catc^\std$), then since $\Ext^1(N'', N') \cong \Ext^1(M', M'')$ there is an exact sequence $0 \mor M'' \mor M \mor M' \mor 0$ mapped to the above sequence under $F$, and in particular $N \cong FM$. 
\end{proof}


\begin{thebibliography}{1}

\bibitem{ASS}
I.~Assem, D.~Simson and A.~Skowronski. 
\newblock {\em Elements of the Representation Theory of Associative Algebras 1: Techniques of Representation Theory}. 
\newblock Cambridge University Press, 2006.  

\bibitem{CPS}
E.~Cline, B.~Parshall, and L.~Scott.
\newblock Finite-dimensional algebras and highest weight categories.
\newblock {\em J. Reine Angew. Math.}, 391:85--99, 1988.

\bibitem{Don}
S.~Donkin. 
\newblock {\em The $q$-{S}chur Algebra}. 
\newblock Cambridge University Press, 1999. 

\bibitem{Ful}
W.~Fulton. 
\newblock {\em {Y}oung Tableaux}. 
\newblock Cambridge University Press, 1997. 

\bibitem{KP0}
W.~Kra\'skiewicz and P.~Pragacz.
\newblock Foncteurs de {S}chubert.
\newblock {\em C. R. Acad. Sci. Paris S\'er. I Math.}, 304(9):209--211, 1987.

\bibitem{KP}
W.~Kra\'skiewicz and P.~Pragacz.
\newblock {S}chubert functors and {S}chubert polynomials.
\newblock {\em Eur. J. Comb.}, 25(8):1327--1344, 2004.

\bibitem{Los}
I.~Losev. 
\newblock Highest Weight Categories. 
\newblock \verb+http://mathserver.neu.edu/~iloseu/hw_problems.pdf+. 

\bibitem{Mac}
I.~G. Macdonald.
\newblock {\em Notes on {S}chubert Polynomials}.
\newblock LACIM, Universit\'e du Qu\'ebec \`a Montr\'eal, 1991.

\bibitem{Man}
L.~Manivel.
\newblock {\em Symmetric functions, {S}chubert polynomials and degeneracy
  loci}.
\newblock American Mathematical Society, 2001.

\bibitem{Rin}
C.~M. Ringel.
\newblock The category of modules with good filtrations over a quasi-hereditary
  algebra has almost split sequences.
\newblock {\em Math. Z.}, 208(2):209--223, 1991.

\bibitem{Sam}
S.~V. Sam.
\newblock {S}chubert complexes and degeneracy loci.
\newblock {\em Journal of Algebra}, 337:103--125, 2011.

\bibitem{W}
M.~{Watanabe}.
\newblock {An approach toward Schubert positivities of polynomials using Kra\'skiewicz-Pragacz modules}.
\newblock \texttt{arXiv:1406.6203v4}, October 2014.

\bibitem{W2}
M.~{Watanabe}.
\newblock {Tensor product of Kra\'skiewicz and Pragacz's modules}.
\newblock \texttt{arXiv:1410.7981v1}, October 2014. 
\newblock accepted for publication in {\em J. Algebra}. 

\bibitem{Wei}
C.~{Weibel}. 
\newblock {\em An Introduction to Homological Algebra}. 
\newblock Cambridge University Press, 1995. 


\end{thebibliography}
\end{document}